\numberwithin{equation}{section}
\newcommand{\field}[1]{\mathbb{#1}}
\newcommand{\Z}{\field{Z}}
\newcommand{\R}{\field{R}}
\newcommand{\C}{\field{C}}
\newcommand{\N}{\field{N}}
\newcommand{\proj}{\field{P}}
\def\bJ{{\boldsymbol J}}
\newcommand{\cali}[1]{\mathscr{#1}}
\newcommand{\cC}{\cali{C}} 
\newcommand{\cO}{\cali{O}} 
\newcommand{\cH}{\cali{H}} 
 \newcommand{\cL}{\cali{L}}
\newcommand{\cK}{\cali{K}} \newcommand{\cT}{\cali{T}}
\newcommand{\calig}[1]{\mathcal{#1}}
 \newcommand\mO{\calig{O}}
\newcommand\mQ{\calig{Q}} \newcommand\mR{\calig{R}}
 \newcommand{\cP}{\cali{P}}
\newcommand\mK{\calig{K}}
\def\bE{{\boldsymbol E}}
\newcommand{\imat}{\sqrt{-1}}
\DeclareMathOperator{\End}{End} 
\DeclareMathOperator{\ke}{Ker} 
  \DeclareMathOperator{\spec}{Spec}
\DeclareMathOperator{\Id}{Id} \DeclareMathOperator{\supp}{supp}
\DeclareMathOperator{\tr}{Tr} 
 \newcommand{\spin}{$\text{spin}^c$ }
\DeclareMathOperator{\db}{\overline\partial}
\DeclareMathOperator{\pr}{pr} 
\newcommand{\norm}[1]{\lVert#1\rVert} \newcommand{\abs}[1]{\lvert#1\rvert}
\newcommand{\om}{\omega}
\newcommand{\sbullet}{\scriptscriptstyle{\bullet}}
\newtheorem{thm}{Theorem}[section]
\newtheorem{lemma}[thm]{Lemma}
\newtheorem{prop}[thm]{Proposition}
\theoremstyle{definition}
\newtheorem{defn}[thm]{Definition}
\newtheorem{condition}[thm]{Condition}
\newtheorem{notation}[thm]{Notation}
\newtheorem{rem}[thm]{Remark}
\newtheorem{ex}[thm]{Example}
\newcommand{\be}{\begin{equation}}
\newcommand{\ee}{\end{equation}}
\newcommand{\wi}{\widetilde}
\newcommand{\var}{\varepsilon}
\newcommand{\ov}{\overline}
\newcommand{\comment}[1]{}
\begin{document}
\title{Toeplitz operators on symplectic manifolds}
\date{\today}
\author{\textbf{Xiaonan Ma}}
\address{Universit{\'e} Denis Diderot - Paris 7,
UFR de Math{\'e}matiques, Case 7012,
Site Chevaleret,
75205 Paris Cedex 13, France}
\email{ma@math.jussieu.fr}

\author{\textbf{George Marinescu}}
\address{Universit{\"a}t zu K{\"o}ln,  Mathematisches Institut,
    Weyertal 86-90,   50931 K{\"o}ln, Germany\\
    \& Institute of Mathematics `Simion Stoilow', Romanian Academy,
Bucharest, Romania}
\thanks{Second-named author partially supported by the SFB/TR 12}
\email{gmarines@math.uni-koeln.de}

\subjclass[2000]{Primary 58F06, 81S10. Secondary 32A25, 47B35}

\keywords{Toeplitz operator, Berezin-Toeplitz quantization, Bergman kernel, $\text{spin}^c$ Dirac operator}

\dedicatory{Dedicated to Professor Gennadi Henkin with the occasion of his 65th anniversary}

\begin{abstract} We study the Berezin-Toeplitz quantization
on symplectic manifolds making use of the full off-diagonal
asymptotic expansion of the Bergman kernel. 
We give also a characterization of Toeplitz operators 
in terms of their asymptotic expansion.
The semi-classical limit properties of 
the Berezin-Toeplitz quantization for non-compact manifolds and orbifolds 
are also established.
\end{abstract}

\maketitle

\setcounter{section}{-1}
\section{Introduction} \label{s1}

Quantization is a procedure that leads from a classical dynamical system 
to an associated algebra whose behavior reduces to that of 
the given classical system in an appropriate limit. 
In the usual case, the limit involves Planck's constant $\hbar$ approaching zero.
The aim of the geometric quantization theory 
\cite{BFFLS,Berez:74,Fedo:96,Kos:70,Sou:70} is to relate the classical observables 
(smooth functions) on a phase space (a symplectic manifold) to 
the quantum observables (bounded linear operators) on the quantum space 
(sections of a line bundle). One particular way to quantize the phase space 
is the Berezin-Toeplitz quantization,
which we briefly describe.

Let us consider a compact K{\"a}hler manifold $X$ with K{\"a}hler form $\om$. 
On $X$ we are given a holomorphic Hermitian line bundle $(L,h^L)$
 endowed with the Chern connection $\nabla^L$ with curvature $R^L$. 
We assume that the prequantization condition  
$\frac{\sqrt{-1}}{2\pi}R^L=\omega$ is fulfilled. 
For any $p\in\N$ let $L^p:=L^{\otimes p}$ be the $p^{\text{th}}$ tensor power of $L$,
$L^2(X,L^p)$ be the space of $L^2$-sections of $L^p$ 
with norm induced by $h^L$ and $\om$, and 
$P_p:L^2(X,L^p)\to H^0(X,L^p)$ be the orthogonal projection on the space of 
holomorphic sections.
To any function $f\in\cC^\infty(X)$
we associate a sequence of linear operators 
\begin{equation}\label{0.0}
T_{f,\,p}:L^2(X,L^p)\to L^2(X,L^p), \quad T_{f,\,p}=P_p\,f\,P_p\,,
\end{equation} 
where for simplicity we denote by $f$ the operator of multiplication with $f$.
Then as $p\to \infty$, the following properties hold:
\begin{equation}\label{0.1}
\begin{split}
&\lim_{p\to\infty}\norm{T_{f,\,p}}={\norm f}_\infty
:=\sup_{x\in X} |f(x)|\,,\\
&[T_{f,\,p}\,,T_{g,\,p}]=\frac{\sqrt{-1}}{\, p}T_{\{f,g\},\,p}+\mO(p^{-2}),
\end{split}
\end{equation}
where $\{ \,\cdot\, , \,\cdot\, \}$ is the Poisson bracket on $(X,2\pi \om)$
(cf. \eqref{toe4.1}) and $\|\cdot\|$ is the operator norm.
Thus the Poisson algebra $(\cC^\infty(X),\{ \cdot , \cdot \})$ 
is approximated by the operator algebras of Toeplitz operators 
in the norm sense as $p\to \infty$; the role of the Planck constant is 
played by $\hbar=1/p$. This is the so-called semi-classical limit process.

The relations \eqref{0.1} were proved first in some special cases: 
in \cite{KliLe:92} for Riemannian surfaces, in \cite{Cob:92} for $\C^n$ 
and in \cite{BLU:93} for bounded symmetric domains in $\C^n$, 
by using explicit calculations.
Then Bordemann, Meinrenken and Schlichenmaier \cite{BMS} 
treated the case of a compact K{\"a}hler manifold
using the theory of Toeplitz structures (generalized Szeg{\"o} operators) by
Boutet de Monvel and Guillemin \cite{BoG}.
Moreover, Schlichenmaier \cite{Schlich:00} 
(cf. also  \cite{KS01}, \cite{Charles1})
continued this train of thought and showed that for any $f,g\in \cC^\infty(X)$,
 the product $T_{f,\,p}\,T_{g,\,p}$ has an asymptotic expansion 
\begin{equation}\label{0.2}
T_{f,\,p}\,T_{g,\,p}=\sum_{k=0}^\infty T_{C_k(f,\,g)}p^{-k}+\cO(p^{-\infty})
\end{equation}
 in the sense of \eqref{atoe2.2}, where $C_k$ are bidifferential operators, 
satisfying $C_0(f,g)=fg$ and $C_1(f,g)-C_1(g,f)=\sqrt{-1}\,\{f,g\}$. 
As a consequence, one constructs geometrically an associative star product,
 defined by setting for any $f,g\in \cC^\infty(X)$,
\begin{equation}\label{0.3}
f*g:=\sum_{k=0}^\infty C_k(f,g) \hbar^{k}\in\cC^\infty(X)[[\hbar]].
\end{equation}
For previous work on Berezin-Toeplitz star products in special cases
see \cite{MoOr:83,CaGuRa:90}.

The papers \cite{BMS,Charles1,KS01,Schlich:00} rely on the method and results of 
Boutet de Monvel, Guillemin and Sj{\"o}strand \cite{BoG,BS:76}. 
They perform the analysis on the principal bundle associated to $L$, 
i.e., the circle bundle $Y$ of the dual bundle $L^*$ of $L$.
Actually $Y=\partial D$, where $D:=\{v\in L^*\,:\,\abs{v}_{h^{L^*}}<1\}$, which is a strictly pseudoconvex domain, due to the positivity of $(L,h^L)$ (this fact is a basic observation due to Grauert).  

Let us endow $Y$ with the volume form $d\theta\wedge\varrho^*\omega^n$, where $\varrho:Y\to X$ is the bundle projection. Consider the space $L^2(Y)$ and for each $p\in\Z$ the subspace
$L^2(Y)_p$ of functions on $Y$  transforming under the $S^1$-action on $Y$ according to the rule $\varphi(e^{i\theta}y)=e^{ip\theta}\varphi(y)$. There is a canonical isometry $L^2(Y)_p\cong L^2(X,L^p)$ which together with the
Fourier decomposition $L^2(Y)\cong\oplus_{p\in\Z} L^2(Y)_p$ (the latter is a Hilbert space direct sum)
delivers a canonical isometry 
$L^2(Y)\cong\oplus_{p\in\Z} L^2(X,L^p)$.

Let $\db_b$ denote the tangential Cauchy-Riemann operator on $Y$.
A function $\varphi\in L^2(Y)$ is called Cauchy-Riemann (CR for short) if it satisfies 
the tangential Cauchy-Riemann equations $\db_b \varphi=0$ (in the sense of distributions).
Let $\cH^2(Y)\subset L^2(Y)$ be the space of CR functions (Hardy space).
For every $p\in\N$ let us denote $\cH^2_p(Y)= L^2(Y)_p\cap\cH^2(Y)$. Then we have the Hilbert sum decomposition
$\cH^2(Y)=\oplus_{p\in\N}\cH^2_p(Y)$. Moreover, $\cH^2_p(Y)$  
is identified through the canonical isometry $L^2(Y)_p\cong L^2(X,L^p)$ to the subspace 
$H^0(X,L^p)$. Thus
$\cH^2(Y)\cong\oplus_{p\in\N}H^0(X,L^p)$.

Therefore, in order to study the Bergman projections $P_p$, one can replace the family $\{P_p\}_{p\in\N}$ 
with the orthogonal projection $\boldsymbol{S}:L^2(Y)\to\oplus_{p\in\N}\cH^p(Y)$, 
called Szeg{\"o} projection. The key result is that $\boldsymbol{S}$ is 
a Fourier integral operator of order $0$ of Hermite type 
(Boutet de Monvel-Sj{\"o}strand \cite{BS:76}) and this allows to 
apply the theory of Fourier integral operators to obtain the properties of Toeplitz structures.

In the framework of Toeplitz structures, Guillemin \cite{Guill:95} (cf. also \cite{BU} for related results) constructed 
a star product on compact symplectic manifolds by replacing 
the CR functions with functions annihilated by 
a first order pseudodifferential operator
$D_b$ on the circle bundle of $L^*$ introduced in \cite{BoG}. 
The operator $D_b$ has the same microlocal structure as 
the tangential Cauchy-Riemann operator $\db_b$ and it is 
derived actually by first constructing the Szeg{\"o} kernel.

In this paper we propose a different approach to the study of
Berezin-Toeplitz quantization and Toeplitz operators. 
This consists in applying the off-diagonal 
asymptotic expansion as $p\to \infty$ of the Bergman kernel $P_p(x,x')$, 
which is the Schwartz kernel of the Bergman projection $P_p$\,. 

We can actually treat the case of symplectic manifolds.
Let  $(X,\om)$ be a compact symplectic manifold of real dimension $2n$.
Let $(L,h^L)$ be a Hermitian line bundle on $X$ endowed 
with a Hermitian connection $\nabla^L$. 
The curvature of this connection is given by $R^L=(\nabla^L)^2$. 
We will assume throughout the paper that $(L,h^L,\nabla^L)$
satisfies the \textbf{\emph{prequantization condition}}\/:
\begin{equation}\label{0.-1}
\frac{\sqrt{-1}}{2\pi}R^L=\omega\,.
\end{equation}
$(L,h^L,\nabla^L)$ is called a \textbf{\emph{prequantum line bundle}}.
Due to the analogy to the complex manifolds the bundle $L$ 
will be also called positive.
We also consider a twisting Hermitian vector bundle $(E,h^E)$ on $X$
with Hermitian connection $\nabla^E$.

Let $J$ be an almost complex structure on $TX$ such that $\omega$ is compatible with $J$ and $\omega(\cdot,J\cdot)>0$. Let $g^{TX}$ be a Riemannian metric on $TX$ compatible with $J$.

A natural geometric generalization of the operator $\sqrt{2}(\db+\db^*)$ 
acting on $\Omega^{0,\scriptscriptstyle{\bullet}}(X,L^p)$ is 
the \textbf{\emph{spin$^c$ Dirac operator $D_p$}} acting on  
$\Omega^{0,\bullet}(X,L^p\otimes E)$ (cf. Definition \ref{defDirac1}) 
associated to $J, g^{TX}$, $\nabla^L,\nabla^E$. 

We refer to the orthogonal projection $P_p$ from $\Omega^{0,\scriptscriptstyle{\bullet}}(X,L^p\otimes E)$ onto $\ke (D_p)$ as the \textbf{\emph{Bergman projection}} of $D_p$\,. The Schwartz kernel $P_p(\cdot,\cdot)$ of $P_p$ is called \textbf{\emph{Bergman kernel}} of $D_p$ (cf.\,Definition \ref{defBergman})\,. 
For $f\in \cC^\infty (X,\End(E))$, we define the 
\textbf{\emph{Berezin-Toeplitz quantization}} of $f$ as in \eqref{0.0} by 
\begin{equation}\label{0.6}
T_{f,\,p}:=P_p\, f\, P_p\in \End(L^2(X,\Lambda (T^{*(0,1)}X)\otimes L^p\otimes E)).
\end{equation}

Dai, Liu and Ma \cite{DLM04a} proved the asymptotic expansion as $p\to \infty$ 
of the Bergman kernel $P_p(x,x^\prime)$ of $D_p$ 
on the symplectic manifold $(X,\om)$
by working directly on the base manifold. The main idea of their proof is that 
the positivity of the bundle $L$ implies the existence of a spectral gap 
of the square of the $\text{spin}^c$ Dirac operator, which in turn insures 
that the problem can be localized and transferred to 
the tangent space of a point of the manifold. 

We are thus lead to study the model operator $\cL$ on $\C^n$, its
Bergman projection $\cP$ and Bergman kernel $\cP(Z,Z')$.
The strategy of our approach is to first study the calculus of kernels 
of the type $(F\cP)(Z,Z')$ on $\C^n$, where $F\in\C[Z,Z']$ is a polynomial. 

Using this calculus, the asymptotic expansion as $p\to\infty$ of 
the Bergman kernel of $D_p$ from \cite{DLM04a} and 
the Taylor series expansion of the sections $f$ and 
$g\in \cC^\infty(X,\End(E))$, we find the asymptotic expansion 
of the kernel of $T_{f,\,p}$ (cf. Lemma  \ref{toet2.3}),
and we establish that this kind of asymptotic expansion is also 
a sufficient condition for a family of operators to be a Toeplitz operator
(cf. Theorem \ref{toet3.1}).
In this way, we conclude from the asymptotic expansion of $T_{f,\,p}\,T_{g,\,p}$
that $T_{f,\,p}\,T_{g,\,p}$ is a Toeplitz operator 
in the sense of Definition \ref{toe-def}. 

The following result is one of our main results in this paper.
\begin{thm}\label{toet4.1}
Let $(X,J,\om)$ be a compact symplectic manifold,
$(L,h^L,\nabla^L)$, $(E,h^E,\nabla^E)$ be Hermitian vector bundles as above,
and $g^{TX}$ be an $J$-compatible Riemannian metric on $TX$.

Let $f,g\in\cC^\infty(X,\End(E))$. Then the product of the Toeplitz operators 
$T_{f,\,p}$ and  $T_{g,\,p}$ is a Toeplitz operator, 
more precisely, it admits the asymptotic expansion
in the sense of \eqref{atoe2.2}{\rm:}
\begin{equation}\label{toe4.2}
T_{f,\,p}\,T_{g,\,p}=\sum^\infty_{r=0}p^{-r}T_{C_r(f,g),\,p}+\mO(p^{-\infty}),
\end{equation} 
where $C_r$ are bidifferential operators and $C_r(f,g)\in\cC^\infty(X,\End(E))$ and $C_0(f,g)=fg$.

If $f,g\in\cC^\infty(X)$, we have 
\begin{equation}\label{toe4.3}
C_1(f,g)-C_1(g,f)= \sqrt{-1}\{f,g\} \Id_E,
\end{equation} 
and therefore
\begin{equation}\label{toe4.4}
[T_{f,\,p}\,,T_{g,\,p}]=\frac{\sqrt{-1}}{\, p}T_{\{f,g\},\,p}+\mO(p^{-2}).
\end{equation} 
\end{thm}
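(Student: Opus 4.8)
The plan is to reduce everything to the kernel calculus on $\C^n$ developed in the earlier sections, together with the off-diagonal asymptotic expansion of the Bergman kernel $P_p(x,x')$ from \cite{DLM04a} and the characterization of Toeplitz operators (Theorem \ref{toet3.1}). First I would recall from Lemma \ref{toet2.3} that for $f\in\cC^\infty(X,\End(E))$ the Schwartz kernel of $T_{f,\,p}$ has, in normal coordinates around a point $x_0$ and in the appropriate rescaling $Z=\sqrt{p}\,Z$, an asymptotic expansion whose leading coefficients are of the form $(Q_{r}(f)\,\cP)(\sqrt p Z,\sqrt p Z')$ for polynomials $Q_r(f)$ built from the Taylor coefficients of $f$ at $x_0$, acted on by the model Bergman projection $\cP$. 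The composition $T_{f,\,p}\,T_{g,\,p}=P_p f P_p g P_p$ then has a kernel obtained by composing these expansions; the point is that the model kernels compose within the same class, i.e. $(F\cP)\circ(G\cP)=(K\cP)$ for a polynomial $K$ computable from $F,G$ via the reproducing property of $\cP$. This gives an expansion of the kernel of $T_{f,\,p}\,T_{g,\,p}$ of exactly the form required by the sufficient condition in Theorem \ref{toet3.1}, so $T_{f,\,p}\,T_{g,\,p}$ is itself a Toeplitz operator, say $\sum_r p^{-r}T_{C_r(f,g),\,p}+\mO(p^{-\infty})$, and tracking the construction shows each $C_r$ is a bidifferential operator in $f,g$ with values in $\cC^\infty(X,\End(E))$.

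Next I would identify $C_0$ and $C_1$ by an explicit low-order computation on the model space. For $C_0$: at leading order only the diagonal values of the Taylor expansions contribute and the composition of the three projections collapses, using $\cP^2=\cP$ and $\cP(Z,Z)=$ const, to multiplication by $f(x_0)g(x_0)$; hence $C_0(f,g)=fg$. For $C_1$ one must carry the expansion one step further: the first-order terms come from (i) the $p^{-1/2}$ and $p^{-1}$ terms in the Bergman kernel expansion of \cite{DLM04a}, and (ii) the first-order Taylor terms of $f$ and $g$ paired against the momentum-type operators $b_j,b_j^+$ underlying $\cP$. The standard model computation (creation/annihilation operators acting on Gaussians) yields $C_1(f,g)=$ a sum of $fg$-type terms plus a term proportional to $\sum_j (\nabla_{w_j}f)(\nabla_{\bar w_j}g)$-contractions against the Kähler form built from $R^L$; the precise normalization is fixed by the prequantization condition $\tfrac{\imat}{2\pi}R^L=\om$.

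For \eqref{toe4.3} I would then just antisymmetrize: when $f,g$ are scalar ($\End(E)$-central), the symmetric part of $C_1$ cancels in $C_1(f,g)-C_1(g,f)$ and what survives is precisely the holomorphic-minus-antiholomorphic contraction, which by the compatibility of $J,g^{TX},\om$ and by the definition \eqref{toe4.1} of the Poisson bracket on $(X,2\pi\om)$ equals $\imat\{f,g\}\,\Id_E$. Finally \eqref{toe4.4} is immediate: applying \eqref{toe4.2} to both orderings and subtracting, the $r=0$ terms $T_{fg,\,p}$ cancel, and the $r=1$ terms give $p^{-1}T_{C_1(f,g)-C_1(g,f),\,p}=\tfrac{\imat}{p}T_{\{f,g\},\,p}$, with the remainder absorbed into $\mO(p^{-2})$ (here one uses that an $\mO(p^{-\infty})$-Toeplitz remainder is in particular $\mO(p^{-2})$ in operator norm, which follows from the norm estimate for Toeplitz operators with vanishing symbol expansion).

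The main obstacle I expect is the composition step: showing cleanly that the asymptotic expansions of the kernels of $P_p f P_p$ and $P_p g P_p$ can be multiplied and reorganized so that the result again lies in the Toeplitz class, with uniform control of the remainder in $p$. Concretely one must show that composing the localized model operators $F\cP$ with controlled polynomial growth produces again an operator of the same type with a computable polynomial symbol, and that the errors from the rescaling, the cut-off functions, and the difference between $P_p$ and its model on $\C^n$ all contribute only $\mO(p^{-\infty})$. Granting the kernel calculus of the earlier sections (in particular the algebra generated by the $(F\cP)$ under composition) and the characterization in Theorem \ref{toet3.1}, this becomes a bookkeeping argument, but it is where the real work lies; the identification of $C_0,C_1$ is then a finite model computation.
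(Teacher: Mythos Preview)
Your proposal is correct and follows essentially the same route as the paper: verify the three hypotheses of Theorem~\ref{toet3.1} for $T_{f,p}T_{g,p}$ by composing the kernel expansions from Lemma~\ref{toet2.3} via the $(F\cP)\circ(G\cP)=(\cK[F,G]\cP)$ calculus, then read off $C_0$ and $C_1$ from the model computation on~$\C^n$.

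One point where you are vaguer than the paper and where a naive execution could go wrong: in computing $C_1(f,g)-C_1(g,f)$, you say ``the symmetric part of $C_1$ cancels''. This is not quite the mechanism. The coefficient $C_1(f,g)$ involves, via \eqref{toe4.10}--\eqref{toe4.11}, cross terms with the subleading Bergman polynomials $J_{1,x_0}$ and $J_{2,x_0}$, and these do \emph{not} cancel by mere antisymmetrization in $f,g$. The paper eliminates them by exploiting the identity $T_{f,p}P_p=P_pT_{f,p}$, which at the level of model coefficients gives $Q_{r,x}(f,1)=Q_{r,x}(1,f)$ and hence the relation \eqref{toe4.13}. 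Combined with Lemma~\ref{toet2.5} (for scalar $f$, cf.\ Remark~\ref{toet2.6}), this reduces $C_1(f,g)-C_1(g,f)$ to the pure first-Taylor-coefficient expression \eqref{toe4.14}, after which the creation/annihilation calculus \eqref{toe1.13} yields \eqref{toe4.16}. If you try to proceed without this trick you would need the explicit form of $J_1$ (and even $J_2$), which the paper deliberately avoids (Remark~\ref{rue}). So your outline is right, but make sure to invoke $P_pT_{f,p}=T_{f,p}P_p$ at the $C_1$ step rather than appealing to a generic symmetry argument.
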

In conclusion, the set of Toeplitz operators forms an algebra. 
Moreover, the Berezin-Toeplitz quantization has the correct 
semi-classical behavior (cf.\,Theorem \ref{toet4.2}). 
In particular, when $(X,J,\om)$ is a compact K{\"a}hler manifold and $E=\C$,
$g^{TX}=\om(\cdot,J\cdot)$, these results give a new proof 
of \eqref{0.1}-\eqref{0.3} (cf.\,Remark \ref{toet5.6}).
Some related results 
were also announced in \cite{BU}.

Note that we have established the off-diagonal 
asymptotic expansion of the Bergman kernel for certain non-compact
manifolds \cite[\S 3.5]{MM04a} (e.g., quasi-projective manifolds) and for orbifolds \cite[\S 5.2]{DLM04a}.
By combining these results and the method in this paper,
we carry the Berezin-Toeplitz quantization over to these cases 
(cf.\,Theorems \ref{toet5.1}, \ref{toet6.7}, \ref{toet6.12}).

As explained as above, an interesting corollary of our results 
is a canonical geometric construction 
of associated star products \eqref{0.3} in several cases.
We refer to Fedosov's book \cite{Fedo:96} for 
a construction of formal star products on symplectic manifolds
and to Pflaum \cite{Pflaum:03} for the generalization to orbifolds.
Related results appear in \cite{Charles2,PPT07}. 

We refer the readers to our book \cite{MM:07}
for a comprehensive study of the Berezin-Toeplitz quantization
along the lines of the present paper.

For the reader's convenience, we conclude the introduction with a brief outline of the paper.
We begin in Section \ref{toes1} by explaining the formal calculus on $\C^n$ for the model operator $\cL$.
In Section \ref{s3}, we recall the definition of the spin$^c$ Dirac operator 
and the asymptotic expansion of the Bergman kernel obtained in \cite{DLM04a}.
In Section \ref{ts2}, we establish the characterization of Toeplitz operators
in terms of their kernel. As a consequence, we establish
that the set of Toeplitz operators forms an algebra.
Finally, in Sections \ref{toes5} and \ref{pbs4}, we study the Berezin-Toeplitz 
quantization for non-compact manifolds and orbifolds.

We will use the following notations throughout. 
For $\alpha=(\alpha_1,\cdots,\alpha_{n})\in \N^{n}$, 
$B=(B_1,\cdots,B_n)\in \C^n$, we set
\begin{equation*}
|\alpha|= \sum_{j=1}^n \alpha_j,\quad \alpha ! =\prod_j (\alpha_j !),
\quad B^\alpha = \prod_j B_j^{\alpha_j}\,.
\end{equation*}

\section{Kernel calculus on $\C^n$}\label{toes1}

In this Section we explain the formal calculus on $\C^n$ 
for our model operator $\cL$, and we derive the properties of the calculus
of the kernels $(F\cP)(Z,Z^\prime)$, where $F\in\C[Z,Z^\prime]$ and
$\cP(Z,Z^\prime)$ is the kernel of the projection on the null space 
of the model operator $\cL$. 
This calculus is the main ingredient of our approach.

Let us consider the canonical coordinates $(Z_1,\dotsc,Z_{2n})$ 
on the real vector space $\R^{2n}$. On the complex vector space $\C^n$ 
we consider the complex coordinates $(z_1,\dotsc,z_n)$. 
The two sets of coordinates are linked by the relation 
$z_j=Z_{2j-1}+\imat Z_{2j}$, $j=1,\dotsc,n$.

We consider the $L^2$-norm 
$\norm{\,\cdot\,}_{L^2}=\big(\int_{\R^{2n}}\abs{{\,\cdot\,}}^2\,dZ\big)^{1/2}$
on $\R^{2n}$, where $dZ=dZ_1\cdots dZ_{2n}$ 
is the standard Euclidean volume form.

Let $0<a_1\leqslant a_2\leqslant\dotsc\leqslant a_n$.
We define the differential operators:
\begin{equation}\label{toe1.1}
\begin{split}
&b_i=-2{\tfrac{\partial}{\partial z
_i}}+\frac{1}{2}a_i\overline{z}_i\,,\quad
b^{+}_i=2{\tfrac{\partial}{\partial\overline{z}_i}}+\frac{1}{2}a_i
z_i\,,\\
&b=(b_1,\ldots,b_n)\,.
\end{split}
\end{equation}
Then $b^{+}_i$ is the adjoint of $b_i$ on 
$(L^2(\R^{2n}), \norm{\,\cdot\,}_{L^2})$. Set
\begin{equation}\label{toe1.1a}
\cL=\sum_i b_i\, b^{+}_i\,.
\end{equation}
Then $\cL$ acts as a densely defined self-adjoint operator on 
$(L^2(\R^{2n}),\norm{\,\cdot\,}_{L^2})$. 
\begin{thm}\label{bkt2.17}
The spectrum of $\cL$ on $L^2(\R^{2n})$ is given by
\begin{equation}\label{bk2.68}
\spec(\cL)=
\Big\lbrace2\sum_{i=1}^n\alpha_i a_i\,:\, 
 \alpha =(\alpha_1,\cdots,\alpha_n)\in\N ^n\Big\rbrace
\end{equation}
and an orthogonal basis of the eigenspace of $2\sum_{i=1}^n\alpha_i a_i$
is given by
\begin{equation}\label{bk2.69}
b^{\alpha}\big(z^{\beta}\exp\big({-\frac{1}{4}\sum_{i=1}^n
a_i|z_i|^2}\big)\big)\,,\quad\text{with $\beta\in\N^n$}\,.
\end{equation}
In particular, an orthonormal basis of 
$\ke (\cL)$ is
\begin{equation}\label{bk2.70}
\varphi_\beta(z)=\left(\frac{a ^\beta}{(2\pi)^n 2 ^{|\beta|} \beta!}
\prod_{i=1}^n a_i\right)^{1/2}z^\beta
\exp\Big (-\frac{1}{4} \sum_{j=1}^n a_j |z_j|^2\Big )\,,\quad \beta\in\N^n\,.
\end{equation}

\end{thm}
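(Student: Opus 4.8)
The plan is to treat $\cL$ as a decoupled system of harmonic oscillators and run the standard annihilation/creation operator argument. Since $\cL$ is unbounded, the one point that needs real care is establishing completeness of the eigenbasis; once that is in hand, self-adjointness of $\cL$ together with the spectral theorem gives the rest essentially for free.

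\emph{Commutators and the kernel.} A direct computation from \eqref{toe1.1} gives $[b_i,b_j]=[b^+_i,b^+_j]=0$ and $[b_i,b^+_j]=-2a_i\delta_{ij}$; in particular $[b^+_i,b_i]=2a_i$. Because $b^+_i$ is the adjoint of $b_i$ on $(L^2(\R^{2n}),\norm{\,\cdot\,}_{L^2})$, for $\varphi\in\Dom(\cL)$ one has $\langle\cL\varphi,\varphi\rangle=\sum_i\norm{b^+_i\varphi}_{L^2}^2\geqslant 0$; hence $\cL\geqslant 0$ and $\varphi\in\ke(\cL)$ iff $b^+_i\varphi=0$ for every $i$. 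Writing $\varphi=\psi\exp(-\tfrac14\sum_ja_j\abs{z_j}^2)$ one checks $b^+_i\varphi=2(\partial\psi/\partial\overline{z}_i)\exp(-\tfrac14\sum_ja_j\abs{z_j}^2)$, so $\ke(\cL)$ is exactly the space of $L^2$ functions of this form with $\psi$ entire on $\C^n$, i.e.\ the Bargmann--Fock space. Its monomials $z^\beta$ are pairwise orthogonal (angular integration) and total, and computing $\int_{\R^{2n}}\abs{z^\beta}^2\exp(-\tfrac12\sum_ja_j\abs{z_j}^2)\,dZ=(2\pi)^n\,2^{\abs\beta}\,\beta!/(a^\beta\prod_ia_i)$ produces the normalizing constant in \eqref{bk2.70}.

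\emph{Ladder operators and the spectrum.} The commutators yield $[\cL,b_i]=2a_ib_i$ and $[\cL,b^+_i]=-2a_ib^+_i$, hence $\cL\,b^\alpha=b^\alpha(\cL+2\sum_i\alpha_ia_i)$ for all $\alpha\in\N^n$. The functions in \eqref{bk2.69} are of the form $b^\alpha\varphi$ with $\varphi\in\ke(\cL)$: they are Schwartz functions, so they lie in $\Dom(\cL)$, and the last identity shows they are eigenfunctions of $\cL$ with eigenvalue $2\sum_i\alpha_ia_i$. They are nonzero, since an induction from $[b^+_i,b_i]=2a_i$ and $b^+_i\varphi=0$ gives $(b^+)^\alpha b^\alpha\varphi=(\prod_i\alpha_i!\,(2a_i)^{\alpha_i})\varphi$; up to this positive scalar they coincide with the vectors $b^\alpha\varphi_\beta$. (Directly, $\cL\geqslant 0$ shows that starting from any eigenfunction and repeatedly applying the lowering operators $b^+_i$ must terminate in $\ke(\cL)$, confirming that every eigenvalue lies in $\{2\sum_i\alpha_ia_i:\alpha\in\N^n\}$.)

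\emph{Orthogonality, completeness, conclusion.} The family $\{b^\alpha\varphi_\beta\}_{\alpha,\beta}$ is orthogonal by three observations: eigenfunctions of the self-adjoint $\cL$ for distinct eigenvalues are orthogonal; for a fixed eigenvalue of $\cL$, the commuting self-adjoint operators $N_i:=\tfrac1{2a_i}b_ib^+_i$ satisfy $N_i(b^\alpha\varphi_\beta)=\alpha_i\,b^\alpha\varphi_\beta$, so vectors with distinct $\alpha$ lie in different joint eigenspaces; and for fixed $\alpha$, $\langle b^\alpha\varphi_\beta,b^\alpha\varphi_{\beta'}\rangle=(\prod_i\alpha_i!\,(2a_i)^{\alpha_i})\langle\varphi_\beta,\varphi_{\beta'}\rangle$, which reduces to the orthonormality of the $\varphi_\beta$. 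For completeness, the identity $b_i(\overline{z}^{\,\gamma}z^\mu e^{-Q})=a_i\,\overline{z}^{\,\gamma+e_i}z^\mu e^{-Q}-2\mu_i\,\overline{z}^{\,\gamma}z^{\mu-e_i}e^{-Q}$, with $Q=\tfrac14\sum_ja_j\abs{z_j}^2$, shows by induction on $\abs\gamma$ that the linear span of the $b^\alpha\varphi_\beta$ contains $P(z,\overline{z})e^{-Q}$ for every polynomial $P$, and such functions are dense in $L^2(\R^{2n})$ (after rescaling the variables this is the completeness of the Hermite functions). Thus $\{b^\alpha\varphi_\beta\}$ is a total orthogonal set of eigenfunctions of the self-adjoint operator $\cL$ with the stated eigenvalues, and the spectral theorem then yields \eqref{bk2.68} together with the assertion that \eqref{bk2.69} and \eqref{bk2.70} give orthogonal, resp.\ orthonormal, bases of the corresponding eigenspaces. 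I do not expect any serious obstacle here: the content is entirely the commutator bookkeeping and the density statement, both classical for operators of harmonic-oscillator type.
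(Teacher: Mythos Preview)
The paper does not actually prove this theorem; immediately after the statement it refers the reader to \cite[Theorem 1.15]{MM04a} and \cite[Theorem\,4.1.20]{MM:07}. So there is no in-paper argument to compare against.

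Your argument is the classical ladder-operator proof for a system of decoupled harmonic oscillators, and it is correct. The commutator computations, the identification of $\ke(\cL)$ with the weighted Bargmann--Fock space, the raising relation $\cL\,b^\alpha=b^\alpha(\cL+2\sum_i\alpha_ia_i)$, and the normalization calculation are all accurate. Your treatment of the possible degeneracy of eigenvalues via the commuting number operators $N_i=\tfrac1{2a_i}b_ib_i^+$ is the right way to separate different $\alpha$'s with the same $2\sum_i\alpha_ia_i$, and the completeness via density of $P(z,\overline{z})e^{-Q}$ in $L^2(\R^{2n})$ is standard. This is essentially the argument one finds in the cited references (and in any treatment of the Landau levels or the Fock model), so there is no divergence in approach to report.
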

\noindent
For a proof we refer to 
\cite[Theorem 1.15]{MM04a} (cf. also \cite[Theorem\,4.1.20]{MM:07}). 
Let $\cP(Z,Z')$ denote the kernel of the orthogonal projection 
$\cP:L^2 (\R^{2n})\longrightarrow\ke(\cL)$ with respect to $dZ$. 
We call $\cP(\cdot,\cdot)$ the Bergman kernel of $\cL$.

It is easy to see that 
$\cP(Z,Z')=\sum_{\beta}\varphi_\beta(z)\,\overline{\varphi_\beta(z')}$.
We infer the following formula for the kernel $\cP(Z,Z')$:
\begin{equation}\label{toe1.3}
\cP(Z,Z') =\prod_{i=1}^n
\frac{a_i}{2\pi}\:\:\exp\Big(-\frac{1}{4}\sum_{i=1}^n
a_i\big(|z_i|^2+|z^{\prime}_i|^2 -2z_i\overline{z}_i'\big)\Big)\,.
\end{equation}

In the calculations involving the kernel $\cP(\cdot,\cdot)$,
 we prefer however to use the orthogonal decomposition of $L^2(\R^{2n})$ 
given in Theorem \ref{bkt2.17} and the fact that $\cP$ is 
an orthogonal projection, rather than
integrating against the expression \eqref{toe1.3} of $\cP(\cdot,\cdot)$. 
This point of view helps simplify a lot the computations 
and understand better the operations.
As an example, if $\varphi(Z)= b^\alpha z^\beta
\exp\Big (-\frac{1}{4} \sum_{j=1}^n a_j |z_j|^2\Big )$ 
with $\alpha,\beta\in \N^n$, then Theorem \ref{bkt2.17} 
implies immediately that
 \begin{align}\label{toe1.4}
(\cP \varphi)(Z)= \left \{  \begin{array}{ll}
\displaystyle{z^\beta \exp\Big (-\frac{1}{4} \sum_{j=1}^n a_j |z_j|^2\Big ) }
& \mbox{if} \,\, |\alpha|=0,  \\
 0& \mbox{if} \,\, |\alpha|>0.\end{array}\right. 
\end{align}

In the rest of this Section, all operators are defined by their kernels
with respect to $dZ$. In this way, if $F$ is a polynomial on $Z,Z^\prime$,
then $F\cP$ is an operator on $L^2(\R^{2n})$ with kernel 
$F(Z,Z^\prime)\cP(Z,Z^\prime)$ with respect to $dZ$.

We will add a subscript $z$ or
$z^{\prime}$ when we need to specify the operator is acting on the
variables $Z$ or $Z^{\prime}$.
\begin{lemma}\label{toet1.1}
For any polynomial $F(Z, Z^{\prime})\in\C[Z, Z^{\prime}]$,
there exist polynomials $F_\alpha\in\C[z, Z^{\prime}]$ 
and $F_{\alpha,0}\in\C[z, \overline{z}^{\prime}]$, $(\alpha \in \N ^{n})$
such that
\begin{gather}
(F\cP)(Z, Z^{\prime}) =\sum_{\alpha} b^{\alpha}_z
(F_{\alpha}\cP)( Z, Z^{\prime}),\label{toe1.5}\\
((F\cP) \circ \cP)(Z, Z^{\prime}) =
 \sum_{\alpha} b^{\alpha}_z F_{\alpha,0}(z, \overline{z}^{\prime})
 \cP( Z, Z^{\prime}).\label{toe1.5a}
\end{gather}
Moreover, $|\alpha|+ \deg F_{\alpha}$,
$|\alpha|+ \deg F_{\alpha,0}$ have the same parity
with the degree of $F$ in $Z, Z^{\prime}$.
In particular, $F_{0,0}(z, \overline{z}^{\prime})$
is a polynomial in $z$, $\overline{z}^{\prime}$ and its degree
has the same parity with $\deg F$.

For any polynomials
$F,G\in\C[Z, Z^{\prime}]$ there exist polynomial
$\cK[F,G]\in\C[Z, Z^{\prime}]$ such that
\begin{equation}\label{toe1.6}
((F\cP) \circ (G\cP))(Z, Z^{\prime}) =
\cK[F,G](Z, Z^{\prime}) \cP( Z, Z^{\prime}).
\end{equation}
\end{lemma}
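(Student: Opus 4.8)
The plan is to reduce everything to the computation of $b_z^+$ acting on $\cP$ and to the commutation relations between the $b_i$, $b_i^+$ and the coordinate functions. First I would record the key algebraic facts. Since $\cP$ is the orthogonal projection onto $\ke(\cL)=\ke(b_1^+)\cap\dotsb\cap\ke(b_n^+)$, and since $b_i^+$ annihilates every Gaussian $z^\beta\exp(-\tfrac14\sum a_j|z_j|^2)$, we have $b_{i,z}^+\cP=0$ as an operator identity; equivalently, acting on the kernel in the $Z$-variable, $b_{i,z}^+(\cP(Z,Z'))=0$. Dually, since $\cP(Z,Z')=\overline{\cP(Z',Z)}$ and the kernel is symmetric under $i\mapsto$ conjugation-in-$z'$ of the $z$-formulas, one checks from \eqref{toe1.3} that $b_{i,z}^+$ applied to $\cP(Z,Z')$ vanishes while the corresponding annihilation-type operator in $z'$ also annihilates it. Finally, I would use the commutator relations $[b_i,b_j^+]=-2a_i\delta_{ij}$, $[b_i,z_j]=0$, $[b_i^+,z_j]=2\delta_{ij}$, $[b_i^+,\overline z_j]=0$, and that multiplication by $z_i$ and by $\overline z_i$ can be re-expressed: $\overline z_i = \tfrac{2}{a_i}(b_i + 2\partial_{z_i})$ hence $a_i\overline z_i = 2b_i + 4\partial_{z_i}$, and similarly $a_i z_i = 2b_i^+ - 4\partial_{\overline z_i}$; more useful is that, modulo operators that kill $\cP$, multiplication by $\overline z_i$ acting on the left is congruent to $\tfrac{2}{a_i}b_{i,z}$ plus a first-order holomorphic derivative which itself can be pushed into the $b^\alpha$-normal form.

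Next I would prove \eqref{toe1.5}. Write $F(Z,Z')$ as a polynomial in $z_i,\overline z_i, z_i', \overline z_i'$. Using $a_i\overline z_i = 2b_{i,z} + 4\partial_{z_i}$ and integrating/commuting, I claim every occurrence of $\overline z_i$ in $F(Z,Z')\cP(Z,Z')$ can be traded for a $b_{i,z}$ acting on the left on a polynomial-times-$\cP$, at the cost of lowering degree by introducing derivatives of $\cP$ in $z_i$; but $\partial_{z_i}$ applied to $\cP(Z,Z')$ produces again a polynomial (in $z_i,\overline z_i'$) times $\cP$ by \eqref{toe1.3}, so after finitely many steps one reaches an expression $\sum_\alpha b_z^\alpha(F_\alpha(z,Z')\cP(Z,Z'))$ with $F_\alpha\in\C[z,Z']$, i.e.\ no bare $\overline z_i$ survives outside the $b^\alpha$. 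This is the content of \eqref{toe1.5}. The parity claim follows by induction: each elementary move ($\overline z_i\rightsquigarrow b_{i,z}$ or $\overline z_i\rightsquigarrow\partial_{z_i}$-of-$\cP$) changes $|\alpha|+\deg F_\alpha$ by an even number, because $b_{i,z}$ contributes $+1$ to $|\alpha|$ and $-1$ to $\deg$, while differentiating $\cP$ trades one $\overline z_i$ for one $\overline z_i'$, preserving total degree; so the parity of $\deg F$ is preserved throughout.

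For \eqref{toe1.5a}, compose the formula \eqref{toe1.5} with $\cP$ on the right. Using $\cP^2=\cP$ and the fact that $\cP$ kills the image of any $b^+$, one sees that $(b_z^\alpha(F_\alpha\cP))\circ\cP$ forces, after integration against $\cP(Z'',Z')$ in the intermediate variable, the $Z'$-dependence of $F_\alpha$ to collapse: by \eqref{toe1.4} only the component of $F_\alpha$ with no creation operators in $Z''$ and the "holomorphic-in-$z''$, antiholomorphic-in-$z'$" Gaussian matrix element survives, which is exactly a polynomial in $z$ and $\overline z'$. Concretely, $(P(z'',Z')\cP)\circ\cP$ has kernel $\int P(z'',Z')\cP(Z'',W)\cP(W,Z')\,dW$; expanding $P$ in the orthonormal basis $\varphi_\beta$ and using orthogonality picks out $F_{\alpha,0}(z,\overline z')\cP(Z,Z')$ for a suitable polynomial $F_{\alpha,0}\in\C[z,\overline z']$, with the same parity bookkeeping as before, giving \eqref{toe1.5a}; the special case $|\alpha|=0$ yields the stated properties of $F_{0,0}$. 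Finally \eqref{toe1.6} is immediate from \eqref{toe1.5a}: apply \eqref{toe1.5} to $F$, so $(F\cP)\circ(G\cP)=\sum_\alpha b_z^\alpha\big((F_\alpha\cP)\circ(G\cP)\big)$; but $(F_\alpha\cP)\circ(G\cP)=(F_\alpha\cP)\circ\cP\circ(G\cP)$ (since $G\cP=G\cP^2=(G\cP)\circ\cP$ would need care — instead use $\cP G\cP$ directly) — more cleanly, write $(G\cP) = \cP\circ(G\cP)$ is false, so instead: $(F\cP)\circ(G\cP)$ has kernel $\int F(Z,W)\cP(Z,W)\,G(W,Z')\cP(W,Z')\,dW$, and since $\cP(Z,W)\cP(W,Z')$ integrated against a polynomial in $W$ again has the form $Q(Z,Z')\cP(Z,Z')$ by the Gaussian-moment computation underlying \eqref{toe1.3}, we get $\cK[F,G](Z,Z')\cP(Z,Z')$ with $\cK[F,G]\in\C[Z,Z']$.

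The main obstacle I anticipate is bookkeeping in the reduction step of \eqref{toe1.5}: showing that replacing $\overline z_i$ by $\tfrac{2}{a_i}(b_{i,z}+2\partial_{z_i})$ and then commuting the $b_{i,z}$'s to the far left past the remaining polynomial factors terminates and yields genuinely polynomial coefficients $F_\alpha$, with the creation operators correctly ordered as $b^\alpha$. One must be careful that $\partial_{z_i}$ hitting $\cP(Z,Z')$ reproduces a polynomial multiple of $\cP$ (true, by differentiating \eqref{toe1.3}, which brings down a linear expression in $\overline z_i'$ and $z_i$), so the induction on total degree closes; and one must track that $b_{i,z}$ commutes with $b_{j,z}$ (they do, being first-order operators with constant-linear coefficients satisfying $[b_i,b_j]=0$), so the notation $b^\alpha$ is unambiguous. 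The parity invariant is the clean way to organize this and to avoid getting lost in the recursion.
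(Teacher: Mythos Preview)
Your strategy is essentially the paper's, but you take a more circuitous route and make one genuine misstatement.

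For \eqref{toe1.5}, the paper uses the identity $b_{j,z}\cP(Z,Z')=a_j(\overline z_j-\overline z_j')\cP(Z,Z')$ (immediate from \eqref{toe1.3}), which rewrites directly as $\overline z_j\cP=\tfrac{1}{a_j}b_{j,z}\cP+\overline z_j'\cP$. Together with the commutator $[g,b_{j,z}]=2\partial_{z_j}g$, this lets one push every $\overline z_j$ either into a $b_{j,z}$ on the far left or into a $\overline z_j'$, by induction on degree. Your route via $a_i\overline z_i=2b_{i,z}+4\partial_{z_i}$ lands in the same place but forces you to differentiate $\cP$ explicitly; note that $\partial_{z_i}\cP=\tfrac{a_i}{4}(2\overline z_i'-\overline z_i)\cP$ reintroduces a $\overline z_i$, so your claim ``differentiating $\cP$ trades one $\overline z_i$ for one $\overline z_i'$'' is not literally true---there is a cancellation you did not track. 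The paper's identity bypasses this entirely.

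For \eqref{toe1.5a}, the paper's argument is shorter and worth knowing: from \eqref{toe1.4} and \eqref{toe1.5} one gets $(\cP\circ(F\cP))(Z,Z')=F_0(z,Z')\cP(Z,Z')$ with $F_0\in\C[z,Z']$; then apply this to $\overline F$ and take the adjoint (using $\cP^*=\cP$) to obtain $((F\cP)\circ\cP)(Z,Z')=F'(Z,\overline z')\cP(Z,Z')$ with $F'\in\C[Z,\overline z']$, and combine with \eqref{toe1.5}. Your direct orthonormal-basis computation is valid, but along the way you write ``$\cP$ kills the image of any $b^+$'', which is false: $\cP b_j=0$ (since $\operatorname{ran} b_j\perp\ker\cL=\bigcap_i\ker b_i^+$), whereas $\cP b_j^+\neq 0$ in general. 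This misstatement is not actually used in your argument, but you should remove it.

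For \eqref{toe1.6}, your final Gaussian-moment remark is correct but disconnected from the machinery; the paper simply observes that \eqref{toe1.6} follows from \eqref{toe1.5} together with the reasoning behind \eqref{toe1.5a}: write $(F\cP)=\sum_\alpha b_z^\alpha(F_\alpha\cP)$ with $F_\alpha\in\C[z,Z']$, factor each $F_\alpha$ through monomials $p(z)q(Z')$, note that the operator with kernel $p(z)q(W)\cP(Z,W)$ is $M_p\,\cP\,M_q$, and then $\cP\,M_q\circ(G\cP)=\cP\circ(qG\cP)$ is already of the form polynomial$\cdot\cP$ by the step just proved.
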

\begin{proof}
Note that from \eqref{toe1.1} and \eqref{toe1.3}, 
for any polynomial $g(z,\ov{z})\in\C[z,\ov{z}]$, we get
\begin{align}\label{toe1.7}
\begin{split}
&b_{j\,,z}\,\cP(Z, Z^{\prime}) = a_j (\ov{z}_j- \ov{z}_j^{\prime})
\cP(Z, Z^{\prime}),\\
&[g(z,\ov{z}),b_{j\,,z}]=2\frac{\partial}{\partial z_j}g(z,\ov{z})\,.
\end{split}\end{align}
Let $F(Z, Z^{\prime})\in\C[Z, Z^{\prime}]$.
Using repeatedly \eqref{toe1.7} 
we can replace $\ov{z}$ in the expression of $F(Z, Z^{\prime})$ 
by a combination of $b_{j,z}$ and $\ov{z}^\prime$ and \eqref{toe1.5} follows.
We deduce from \eqref{toe1.4} and \eqref{toe1.5} that there exists 
$F_0\in \C[z,Z']$ such that
 \begin{equation}\label{toe1.8}
(\cP\circ (F\cP))(Z,Z^\prime)= (F_0\cP)(Z,Z^\prime)\,.
\end{equation}
We apply now \eqref{toe1.8} for $\ov{F}$ instead of $F$ and 
take the adjoint of the so obtained equality. 
Since $\cP$ is self-adjoint, this implies the existence of 
a polynomial $F^\prime$ in $Z,\ov{z}^\prime$ such that
\begin{equation*}
((F\cP)\circ\cP)(Z,Z^\prime) = F^\prime(Z, \ov{z}^\prime) \cP(Z,Z^\prime). 
\end{equation*}
The latter formula together with \eqref{toe1.5} imply \eqref{toe1.5a}.
Finally, \eqref{toe1.6} results from \eqref{toe1.5} and \eqref{toe1.5a}.
\end{proof}
\begin{ex}
We illustrate how Lemma \ref{toet1.1} works. 
Observe that \eqref{toe1.7} entails
\begin{equation}\label{toe1.7a1}
\ov{z}_j\,\cP(Z, Z^{\prime})=\frac{b_{j\,,z}}{a_j}\,
\cP(Z, Z^{\prime})+\ov{z}_j^{\,\prime}\cP(Z, Z^{\prime}).
\end{equation} 
Moreover, specializing \eqref{toe1.7} for $g(z,\overline{z})=z_i$ we get
\begin{equation}\label{toe1.7b1}
z_i\,b_{j\,,z}\cP(Z, Z^{\prime})
=b_{j\,,z}(z_i\cP)(Z, Z^{\prime})+2\delta_{ij}\cP(Z, Z^{\prime}),
\end{equation}
Formulas \eqref{toe1.7a1} and \eqref{toe1.7b1} give
\begin{align} \label{toe1.11}
\begin{split}
z_i\ov{z}_j\,\cP(Z, Z^{\prime})
&=\frac{1}{a_j}\,b_{j\,,z}\,z_i\cP(Z, Z^{\prime})
+\frac{2}{a_j}\, \delta_{ij}
\cP(Z, Z^{\prime})+z_i\ov{z}^{\,\prime}_j\,\cP(Z, Z^{\prime})\,.
\end{split}\end{align}

Using the preceding formula we calculate further some examples for 
the expression $\cK[F,G]$ introduced \eqref{toe1.6}. 
{}Indeed, equations \eqref{toe1.4}, 
\eqref{toe1.7a1} and \eqref{toe1.11} imply that
\begin{align} \label{toe1.12}
\begin{split}
&\cK[1,\ov{z}_j]\cP =\cP\circ(\ov{z}_j\cP)
=\ov{z}^\prime_j\cP,\quad
\cK[1,z_j]\cP= \cP\circ(z_j\cP)= z_j\cP,\\
&\cK[z_i,\ov{z}_j]\cP =(z_i \cP)\circ(\ov{z}_j\cP)
= z_i  \cP\circ(\ov{z}_j\cP)=z_i\ov{z}^\prime_j\cP,\\
&\cK[\ov{z}_i,z_j]\cP= (\ov{z}_i\cP)\circ(z_j\cP)
= \ov{z}_i\cP\circ(z_j\cP) = \ov{z}_i z_j\cP,\\
&\cK[z_i^\prime,\ov{z}_j]\cP =(z_i^\prime \cP)\circ(\ov{z}_j\cP)
=  \cP\circ(z_i\ov{z}_j\cP)= \frac{2}{a_j}\delta_{ij} \cP
+z_i \ov{z}_j^\prime\cP,\\
&\cK[\ov{z}_i^\prime,z_j]\cP= (\ov{z}_i^\prime\cP)\circ(z_j\cP)
= \cP\circ(\ov{z}_i z_j\cP) = \frac{2}{a_j}\delta_{ij} \cP
+\ov{z}_i^\prime z_j\cP.
\end{split}\end{align}
Thus we get 
\begin{align} \label{toe1.13}
\begin{split}
& \cK[1,\ov{z}_j]= \ov{z}^\prime_j,\quad \cK[1,z_j]=z_j,\\
&\cK[z_i,\ov{z}_j]=z_i\ov{z}^\prime_j,\quad \cK[\ov{z}_i,z_j]= \ov{z}_i z_j,\\
&\cK[\ov{z}_i^\prime,z_j]= \cK[z_j^\prime,\ov{z}_i]
= \frac{2}{a_j}\delta_{ij} +\ov{z}_i^\prime z_j.
\end{split}\end{align}
\end{ex}
\begin{notation} \label{toet1.4}
To simplify our calculations, we introduce the following notation. 
For any polynomial $F\in\C[Z,Z^{\prime}]$ we denote by $(F\cP)_p$ 
the operator defined by the kernel 
$p^{n}(F\cP)(\sqrt{p}Z, \sqrt{p}Z^{\prime})$, that is,
\begin{equation}
((F\cP)_p \varphi)(Z)=\int_{\R^{2n}}p^{n}
(F\cP)(\sqrt{p}Z, \sqrt{p}Z^{\prime})\varphi(Z^{\prime})\,dZ^{\prime}\,,\quad
\text{for any $\varphi\in L^2(\R^{2n})$.}
\end{equation}
Let $F,G\in \C[Z,Z^\prime]$. By a change of variables we obtain
\begin{align} \label{toe1.15}
((F\cP)_p\circ (G\cP)_p)(Z,Z^\prime)
= p^{n}((F\cP)\circ (G\cP))(\sqrt{p}Z, \sqrt{p}Z^{\prime}).
\end{align}
\end{notation}

\section{Bergman kernels on symplectic manifolds}\label{s3}

This Section is organized as follows. 
We recall the definition of the 
spin$^c$ Dirac operator in Section \ref{s3.0}, and in Section \ref{s5.3},
 we explain the asymptotic expansion of the Bergman kernel.

\subsection{The spin$^c$ Dirac operator}
\label{s3.0}

Let $X$ be a compact manifold  of real dimension $2n$ with 
almost complex structure $J$. Let $g^{TX}$ be a Riemannian metric on 
$X$ compatible with $J$, i.e., $g^{TX}(J\cdot, J\cdot)=g^{TX}(\cdot, \cdot)$.

The almost complex structure $J$ induces a splitting
of the complexification of the tangent bundle, 
$TX\otimes_\R \C=T^{(1,0)}X\oplus T^{(0,1)}X$, 
where $T^{(1,0)}X$ and $T^{(0,1)}X$
are the eigenbundles of $J$ corresponding to the eigenvalues $\sqrt{-1}$
and $-\sqrt{-1}$ respectively.
Let $P^{(1,0)}=\frac{1}{2}(1-\sqrt{-1}J)$ 
and  $P^{(0,1)}$ be the natural projections from 
$TX\otimes_\R \C$ onto  $T^{(1,0)}X$ and $T^{(0,1)}X$. 
 Accordingly, we have a decomposition of the
complexified cotangent bundle: $T^{\ast}X\otimes_\R \C=T^{\ast\,(1,0)}X\oplus
T^{\ast\, (0,1)}X$. The exterior algebra bundle decomposes as
$\Lambda (T^{\ast}X)\otimes_\R \C=\oplus_{p,q}\Lambda^{p,q}(T^{\ast}X)$, where
$\Lambda^{p,q}(T^{\ast}X):
=\Lambda^{p}(T^{\ast\,(1,0)}X)\otimes \Lambda^{q}(T^{\ast\,(0,1)}X)$.

Let $\nabla^{TX}$ be the Levi--Civita connection of $(TX, g^{TX})$ 
with associated curvature $R^{TX}$. 
Let $\nabla^XJ\in T^*X\otimes \End(TX)$ be the covariant derivative 
of $J$ induced by $\nabla^{TX}$. Set 
\begin{equation}
\begin{split}
\nabla^{T^{(1,0)}X}&=P^{(1,0)}\,\nabla^{TX}\,P^{(1,0)}\,,\quad
\nabla^{T^{(0,1)}X}=P^{(0,1)}\,\nabla^{TX}\,P^{(0,1)}\,,\\
{^0\nabla}^{TX}&=\nabla^{T^{(1,0)}X}\oplus\nabla^{T^{(0,1)}X},\quad
A_2=\nabla^{TX}-{^0\nabla}^{TX}.
\end{split}
\end{equation}
Then  $\nabla^{T^{(1,0)}X}$ and $\nabla^{T^{(0,1)}X}$ are the canonical 
Hermitian connections on $T^{(1,0)}X$ and $T^{(0,1)}X$ respectively
with curvatures $R^{T^{(1,0)}X}$ and $R^{T^{(0,1)}X}$.
Moreover, ${^0\nabla}^{TX}$ is an Euclidean connection on $TX$.
The tensor  $A_2\in T^{\ast}X\otimes\End(TX)$ satisfies 
\begin{align}\label{frame0}
A_2=\frac{1}{2} J(\nabla^XJ),\quad\quad J\,A_2=-A_2\,J.
\end{align}

For any $v\in TX$ with decomposition 
$v=v_{1,0}+v_{0,1} \in T^{(1,0)}X\oplus T^{(0,1)}X$, let
${\overline v^\ast_{1,0}}\in T^{\ast\,(0,1)}X$ be the metric dual of 
$v_{1,0}$. Then 
\begin{align}
\mathbf{c}(v)=\sqrt{2}({\overline v^\ast_{1,0}}\wedge-i_{v_{\,0,1}})
\end{align}
defines the Clifford action of $v$ on 
$\Lambda^{0,\scriptscriptstyle\bullet}=\Lambda^{\text{even}}
(T^{\ast\,(0,1)}X)\oplus\Lambda^{\text{odd}} (T^{\ast\,(0,1)}X)$, 
where $\wedge$ and $i$ 
denote the exterior and interior product respectively.

The connection $\nabla^{T^{(1,0)}X}$ on $T^{(1,0)}X$ induces naturally 
a Hermitian connection 
 $\nabla^{\Lambda^{0,\scriptscriptstyle\bullet}}$ on 
$\Lambda^{0,\scriptscriptstyle\bullet}=\Lambda^{\bullet}
(T^{\ast\,(0,1)}X)$ which preserves the natural $\Z$-grading on 
$\Lambda^{0,\scriptscriptstyle\bullet}$.
 Let $\{w_j\}_{j=1}^n$ be a local orthonormal frame of $T^{(1,0)}X$.
Let $\{w^j\}_{j=1}^n$ be the dual frame of $\{w_j\}_{j=1}^n$. Then
\begin{equation}\label{frame1}
e_{2j-1}=\tfrac{1}{\sqrt{2}}(w_j+\overline{w}_j)\quad\text{and}\quad
e_{2j}=\tfrac{\sqrt{-1}}{\sqrt{2}}(w_j-\overline{w}_j)\,, 
\quad j=1,\dotsc,n\,,
\end{equation}
form an orthonormal frame of $TX$. Set
\begin{equation}\label{local}
\begin{split}
\mathbf{c}(A_2)&=\frac{1}{4} \sum_{i,j}  
\big\langle A_2 e_i, e_j\big\rangle \mathbf{c}(e_i)\mathbf{c}(e_j)\\
&=\frac{1}{2} \sum_{l,m}\Big( \big\langle A_2 w_l,w_m
\big\rangle\,i_{\overline{w}_l}\,i_{\overline{w}_m}+
\big\langle A_2\overline{w}_l,\overline{w}_m
\big\rangle\,\overline{w}^l\wedge\,\overline{w}^m\wedge \Big)\,, \\
\nabla^{\text{Cliff}}&=\nabla^{\Lambda^{0,\scriptscriptstyle\bullet}}
+\mathbf{c}(A_2).
\end{split}
\end{equation}
The connection $\nabla^{\text{Cliff}}$ is the Clifford connection
on $\Lambda^{0,\scriptscriptstyle{\bullet}}$
induced canonically by $\nabla^{TX}$ (cf. \cite[\S 2]{MM02}).
(Note that in the definition of the Clifford connection in \cite[(2.3)]{MM02}, 
one should add the term ``\,$+\frac{1}{2} \tr|_{T^{(0,1)}X} \Gamma$\,'' 
in the right hand side of
the first line, and the second line should read 
``\,$=d+\sum_{lm}\lbrace\big\langle\Gamma w_l,\overline{w}_m
\big\rangle\,\overline{w}^{\,m}\wedge\,i_{\overline{w}_l}\,+\,$\,''.)

Let $(E,h^E)$ be a Hermitian vector bundle on $X$ 
with Hermitian connection $\nabla^E$ and curvature $R^E$.
Let $(L,h^L)$ be a Hermitian line bundle over $X$ 
endowed with a Hermitian connection $\nabla^L$ with curvature 
$R^L=(\nabla^L)^2$. We assume that $(L,\nabla^L)$ satisfies the 
\textbf{\em prequantization condition}\/, that is 
\begin{equation}\label{bk1.1}
\om(\cdot, J\cdot)>0, \quad \om(J\cdot, J\cdot)=\om(\cdot, \cdot)\,, 
\quad \text{where $\omega:=\frac{\sqrt{-1}}{2\pi}R^L$}\,.
\end{equation} 
This implies in particular that $\om$ is a symplectic form on $X$.

We relate $g^{TX}$ with $\om$ by means of the skew--adjoint 
linear map ${  \bJ}:TX\longrightarrow TX$ which satisfies the relation
\be \label{to1.2}
\om(u,v)=g^{TX}({  \bJ}u,v)\quad\text{for}\quad u,v \in TX.
\ee
Then $J$ commutes with ${  \bJ}$, and $J={  \bJ}(-{  \bJ}^2)^{-\frac{1}{2}}$.

We denote
\begin{equation}
E_p:=\Lambda^{0,\scriptscriptstyle{\bullet}}\otimes L^p\otimes E.
\end{equation}
Along the fibers of $E_p$,
we consider the pointwise Hermitian product $\langle\cdot,\cdot\rangle$  
induced by $g^{TX}$, $h^L$ and $h^E$. 
Let $dv_X$ be the Riemannian volume form of $(TX, g^{TX})$.
The $L^2$--Hermitian product on the space 
$\Omega^{0,\scriptscriptstyle{\bullet}}(X,L^p\otimes E)$ 
of smooth sections of $E_p$ is given by 
\begin{equation}\label{l2}
\langle s_1,s_2\rangle=\int_X\langle s_1(x),s_2(x)\rangle\,dv_X(x)\,.
\end{equation}
We denote the corresponding norm with $\norm{\cdot}_{L^2}$ and 
with $L^2(X,E_p)$ the completion of 
$\Omega^{0,\scriptscriptstyle{\bullet}}(X,L^p\otimes E)$ 
with respect to this norm.

Let $\nabla^{L^p\otimes E}$ be the connection on $L^p\otimes E$ induced 
by $\nabla^L$ and  $\nabla^E$. Let $\nabla^{E_p}$  be the connection on 
$E_p$ induced by $\nabla^{\text{Cliff}}$, $\nabla^{L^p\otimes E}$: 
\begin{equation}
\nabla^{E_p}=
\nabla^{\text{Cliff}}\otimes\Id+\Id\otimes\nabla^{L^p\otimes E}.
\end{equation}

\begin{defn}\label{defDirac1}
The \textbf{\em\spin Dirac operator} $D_p$ is defined by 
\begin{equation}\label{defDirac}
D_p=\sum_{j=1}^{2n}\mathbf{c}(e_j)\nabla^{E_p}_{e_j}:
\Omega^{0,\scriptscriptstyle{\bullet}}(X,L^p\otimes E)\longrightarrow
\Omega^{0,\scriptscriptstyle{\bullet}}(X,L^p\otimes E)\,.
\end{equation}
\end{defn}
\noindent
$D_p$ is a formally self--adjoint, first order elliptic differential operator 
on $\Omega^{0,\scriptscriptstyle{\bullet}}(X,L^p\otimes{E})$,
which interchanges $\Omega^{0,\text{even}}(X,L^p\otimes E)$
and $\Omega^{0,\text{odd}}(X,L^p\otimes E)$ (cf. \cite[\S 1.3]{MM:07}).
\begin{defn}\label{defBergman}
The orthogonal projection
\begin{equation}
P_p:L^2(X,E_p)\longrightarrow\ke(D_p)
\end{equation}
is called the \textbf{\em Bergman projection}\/. 
Let $\pi_1$ and $\pi_2$ be the projections of $X\times X$ on the first and 
second factor. Since $P_p$ is a smoothing operator, 
the Schwartz kernel theorem \cite[p.\,296]{Tay1:96}, \cite[Th.\,B.2.7]{MM:07}
shows that the Schwartz kernel of $P_p$ is smooth, i.e., 
there exists a section 
$P_p(\cdot,\cdot)\in\cC^\infty(X\times X,\pi_1^*(E_p)\otimes \pi_2^*(E_p^*))$ 
such that for any $s\in L^2(X,E_p)$ we have
\begin{equation}\label{to1.12}
(P_p\,s)(x)=\int_X P_p(x,x')s(x')\,dv_X(x')\,.
\end{equation}

The smooth kernel $P_p(\cdot,\cdot)$ is called the \textbf{\em Bergman kernel} 
of $D_p$. Observe that $P_p(x,x)$ is an element of 
$\End(\Lambda (T^{*(0,1)}X)\otimes E)_x$\,.  
\end{defn}

We wish to describe the kernel and spectrum of $D_p$ in the sequel.
For any operator $A$, we denote by $\spec (A)$ the spectrum of $A$.

Recall that $\{w_i\}$ is an orthonormal frame of $(T^{(1,0)}X, g^{TX})$.
Set
\begin{align}\label{to1.13}
\begin{split}
&\om_d=-\sum_{l,m} R^L (w_l,\overline{w}_m)\,\overline{w}^m\wedge
\,i_{\overline{w}_l}\,,\\
& \tau(x)= \sum_j R^L (w_j,\overline{w}_j)= -\pi \tr|_{TX} [J\bJ]\,,\\
&\mu_0=\inf \,\lbrace R^L_x(u,\overline{u})/|u|^2_{g^{TX}}\,:\, 
u\in T_x^{(1,0)}X,\,x\in X\rbrace>0\,.
\end{split}\end{align}
The following result was proved in \cite[Theorems\,1.1,\,2.5]{MM02} 
as an application of
the Lichnerowicz formula \cite[Theorem\,3.52]{BeGeVe} 
(cf. also \cite[Theorem\, 1.3.5]{MM:07}) for $D_p^2$.
\begin{thm}\label{specDirac}
There exists $C>0$ such that for any $p\in\N$,
$s\in\Omega^{0,>0}(X,L^p\otimes E)
:=\bigoplus_{k>0}\Omega^{0,k}(X,L^p\otimes E)$, 
\begin{equation}\label{main1}
\norm{D_{p}s}^2_{L^2}\geqslant(2p\mu_0-C)\norm{s}^2_{L^2}\, .
\end{equation}
Moreover,  
\begin{equation}\label{diag5}
\spec (D^2_p) \subset \{0\}\cup [2p\mu_0-C,+\infty[\,. 
\end{equation}
\end{thm}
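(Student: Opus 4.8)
The plan is to derive everything from the Lichnerowicz-type formula for $D_p^2$. First I would write down the Bochner--Kodaira identity for the square of the $\text{spin}^c$ Dirac operator in the form
\begin{equation*}
D_p^2 = \Delta^{E_p} + p\,\mathbf{c}(R^L) + \text{(lower order terms)},
\end{equation*}
where $\Delta^{E_p}=(\nabla^{E_p})^*\nabla^{E_p}$ is the Bochner Laplacian on $E_p$, $\mathbf{c}(R^L)=\sum_{j,k}R^L(e_j,e_k)\mathbf{c}(e_j)\mathbf{c}(e_k)$ is the Clifford contraction of the curvature of $L$, and the remaining terms (scalar curvature, the curvature of $E$ and of $\Lambda^{0,\bullet}$, contributions from $A_2=\frac12 J(\nabla^XJ)$) are endomorphisms of $E_p$ independent of $p$, hence bounded in $L^2$-operator norm by a constant $C_1$. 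This is precisely \cite[Theorem 1.1]{MM02}; I would simply cite it. The content of \eqref{main1} is then a pointwise lower bound for the curvature term $p\,\mathbf{c}(R^L)$ on the positive-degree part $\Lambda^{0,>0}$.

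Second I would compute the action of $\mathbf{c}(R^L)$ on $\Lambda^{0,\bullet}(T^{*(0,1)}X)$ fiberwise. In the orthonormal frame $\{w_j\}$ with $e_{2j-1},e_{2j}$ as in \eqref{frame1}, a direct Clifford-algebra computation gives, up to the $p$-independent scalar $\tau(x)=\sum_j R^L(w_j,\overline w_j)$ appearing in \eqref{to1.13},
\begin{equation*}
\mathbf{c}(R^L)\big|_{\Lambda^{0,\bullet}} = 2\om_d - \tau,\qquad
\om_d=-\sum_{l,m}R^L(w_l,\overline w_m)\,\overline w^m\wedge i_{\overline w_l}\,.
\end{equation*}
The operator $\om_d$ annihilates $\Lambda^{0,0}$ and is nonnegative; on $\Lambda^{0,k}$ for $k\geq 1$ its lowest eigenvalue is the sum of the $k$ smallest eigenvalues of the positive Hermitian form $R^L(\cdot,\overline\cdot)$, which is at least $\mu_0$ by the definition of $\mu_0$ in \eqref{to1.13} (here positivity of $L$, i.e.\ the prequantization condition \eqref{bk1.1}, is what is used). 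Hence on $\Lambda^{0,>0}$ we get $p\,\mathbf{c}(R^L)\geq 2p\mu_0 - Cp$-independent-bounded terms, and combining with the nonnegativity of $\Delta^{E_p}$ and the bound on the lower-order terms yields
\begin{equation*}
\norm{D_ps}^2_{L^2}=\langle D_p^2 s,s\rangle \geq (2p\mu_0 - C)\norm{s}^2_{L^2}
\end{equation*}
for $s\in\Omega^{0,>0}(X,L^p\otimes E)$, which is \eqref{main1}.

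Third, the spectral statement \eqref{diag5} follows formally: since $D_p$ interchanges even and odd degrees (Definition \ref{defDirac1}), $\ke(D_p)\subset\Omega^{0,\mathrm{even}}$ — in fact one checks $\ke(D_p)\subset\Omega^{0,0}$ using \eqref{main1} together with the same argument applied to $D_p^2$ restricted to even versus the identity $D_p^2=2\om_d\cdot p+\dots$ — and more importantly, $D_p^2$ preserves the $\Z$-grading. On $\Omega^{0,>0}$ the estimate \eqref{main1} shows $D_p^2\geq 2p\mu_0-C$, so the spectrum of $D_p^2$ restricted there lies in $[2p\mu_0-C,\infty)$; on the degree-$0$ part one argues that $\ke(D_p^2|_{\Omega^{0,0}})=\ke(D_p)$ and that the nonzero spectrum there is also bounded below by $2p\mu_0-C$ by a parallel computation, using that $D_p$ maps $\Omega^{0,0}$ into $\Omega^{0,1}$ and $D_p^2|_{\Omega^{0,0}}=D_p^*D_p$. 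Either way, $\spec(D_p^2)\subset\{0\}\cup[2p\mu_0-C,\infty)$.

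The main obstacle is the second step: correctly identifying the endomorphism $\mathbf{c}(R^L)$ on $\Lambda^{0,\bullet}$ and extracting the sharp constant $\mu_0$ from its spectrum on the positive-degree part, while keeping careful track of which terms are $p$-independent (the scalar curvature, $R^E$, the $A_2$-contributions, and the constant $\tau$) so that they can all be absorbed into the single constant $C$. This is a finite-dimensional linear-algebra computation on $\Lambda^\bullet(\C^n)$, but it must be done with the correct signs and normalizations to match \eqref{to1.13}; the cleanest route is to diagonalize $R^L(\cdot,\overline\cdot)$ at each point and reduce to the model computation underlying Theorem \ref{bkt2.17}. All of this is carried out in \cite[Theorems 1.1, 2.5]{MM02}, so in the paper I would simply invoke that reference.
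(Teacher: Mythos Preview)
Your proposal is correct and matches the paper exactly: the paper does not prove Theorem \ref{specDirac} but simply cites \cite[Theorems 1.1, 2.5]{MM02} and the Lichnerowicz formula, precisely as you conclude you would do. One minor slip in your sketch: with the paper's sign convention \eqref{to1.13}, the operator $\omega_d$ is \emph{nonpositive} (cf.\ \eqref{1c31}, where $-2\omega_d=\sum_j 2a_j\,\overline{w}^j\wedge i_{\overline{w}_j}\geqslant 0$), so it is $-\omega_d$ whose lowest eigenvalue on $\Lambda^{0,k}$ is at least $\mu_0$ for $k\geqslant 1$; this does not affect the structure of the argument.
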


\subsection{Off-diagonal asymptotic expansion of Bergman kernel}\label{s5.3}
The existence of the spectral gap expressed in Theorem \ref{specDirac}
allows us to \textit{\textbf{localize}} the behavior of the Bergman kernel.

Let $a^X$ be the injectivity radius of $(X, g^{TX})$.
We denote by $B^{X}(x,\var)$ and  $B^{T_xX}(0,\var)$ the open balls
in $X$ and $T_x X$ with center $x$ and radius $\var$, respectively.
Then the exponential map $ T_x X\ni Z \to \exp^X_x(Z)\in X$ is a
diffeomorphism from $B^{T_xX} (0,\var)$ onto $B^{X} (x,\var)$ for
$\var\leqslant a^X$.  {}From now on, we identify $B^{T_xX}(0,\var)$
with $B^{X}(x,\var)$ via the exponential map for $\var \leqslant a^X$.
Throughout what follows, $\varepsilon$ runs in the fixed intervall $]0, a_X/4[$. 

Let ${\mathbf{f}} : \R \to [0,1]$ be a smooth even function such that
${\mathbf{f}}(v)=1$ for $|v| \leqslant  \var/2$, 
and ${\mathbf{f}}(v) = 0$ for $|v| \geqslant \var$. Set
\begin{equation} \label{0c3}
F(a)= \Big(\int_{-\infty}^{+\infty}{\mathbf{f}}(v) dv\Big)^{-1} 
\int_{-\infty}^{+\infty} e ^{i v a}\, {\mathbf{f}}(v) dv.
\end{equation}
Then $F(a)$ is an even function and lies in the Schwartz space 
$\mathcal{S} (\R)$ and $F(0)=1$.

\noindent
By \cite[Proposition\,4.1]{DLM04a}, we have the \textbf{\em far
off-diagonal} behavior of the Bergman kernel: 
\begin{prop}\label{0t3.0}
For any $l,m\in\N$ and $\var>0$, there exists $C_{l,m,\var}>0$ 
such that for any $p\geqslant 1$, $x,x'\in X$, the following estimate holds:
\begin{equation}\label{0c7}
\left|F\big(D_p)(x,x') 
- P_{p}(x,x')\right|_{\cC ^m(X\times X)}
\leqslant C_{l,m,\var} p^{-l}.
\end{equation}
Especially, for $d(x,x')> \var$, 
\begin{equation}\label{1c3}
|P_p(x,x')|_{\cC^m(X\times X)} \leqslant C_{l,m,\var}\, p^{-l}\,,
\end{equation}
The $\cC ^m$ norm in \eqref{0c7} and \eqref{1c3} is induced by
$\nabla^L$, $\nabla^E$, $h^L$, $h^E$ and $g^{TX}$.
\end{prop}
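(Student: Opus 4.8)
The plan is to exploit the spectral gap from Theorem \ref{specDirac} together with the finite propagation speed of the wave operator associated to $D_p$. First I would observe that, since $F$ lies in the Schwartz space and $F(0)=1$, the operator $F(D_p)$ is well-defined by the functional calculus, and by Theorem \ref{specDirac} the spectrum of $D_p^2$ sits in $\{0\}\cup[2p\mu_0-C,+\infty[$. Writing $F(D_p)-P_p$ via the functional calculus, this operator equals $\phi(D_p)$ where $\phi$ is a Schwartz function that vanishes on a neighborhood of $0$ of size comparable to $\sqrt{2p\mu_0-C}$; hence one gets, for each $l$, an estimate of the Sobolev operator norm of $F(D_p)-P_p$ of order $O(p^{-l})$ by using $\phi(a)=a^{-2N}(a^{2N}\phi(a))$ and applying powers of the elliptic operator $D_p^2$. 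The point is to control the implied constants uniformly in $p$; this follows from elliptic estimates for $D_p^2$ with the correct powers of $p$ (the usual rescaling keeps track of the $\sqrt{p}$ factors), exactly as in \cite{DLM04a}. Then Sobolev embedding upgrades the $L^2$-operator bounds to $\cC^m$-bounds on the Schwartz kernels, giving \eqref{0c7}.

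The second ingredient, needed to pass from \eqref{0c7} to the far-off-diagonal decay \eqref{1c3}, is the locality of $F(D_p)$. By the Fourier inversion formula, $F(D_p)=\big(\int {\mathbf f}\big)^{-1}\int_{\R} e^{ivD_p}{\mathbf f}(v)\,dv$, and ${\mathbf f}$ is supported in $[-\var,\var]$. Since $D_p$ is a first-order formally self-adjoint elliptic operator with principal symbol of unit speed (the Clifford multiplication), the wave operator $e^{ivD_p}$ has propagation speed $1$; therefore the Schwartz kernel of $F(D_p)$ at $(x,x')$ vanishes identically whenever $d(x,x')>\var$. Combining this with \eqref{0c7} immediately yields \eqref{1c3}: for $d(x,x')>\var$ we have $F(D_p)(x,x')=0$, so $|P_p(x,x')|_{\cC^m}=|F(D_p)(x,x')-P_p(x,x')|_{\cC^m}\leqslant C_{l,m,\var}\,p^{-l}$.

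The main obstacle, and the step requiring the most care, is the \emph{uniformity in $p$} of the constant $C_{l,m,\var}$ in the functional-calculus estimate. One has to track the polynomial-in-$p$ growth of the coefficients of $D_p^2$ (it is $D_p^2=\Delta^{E_p}+p\cdot(\text{zeroth order})+(\text{zeroth order})$ by the Lichnerowicz formula) against the $O(p^{-\infty})$ decay coming from the gap, and verify that the net effect is $O(p^{-l})$ for every $l$ after rescaling coordinates by $\sqrt p$ near the diagonal. This is precisely the estimate carried out in \cite[Proposition 4.1]{DLM04a}, which we invoke; the remaining finite-propagation-speed argument is classical and requires only that $g^{TX}$ be fixed and the injectivity radius $a^X$ positive, so that ``$d(x,x')>\var$'' is a meaningful condition on a compact $X$.
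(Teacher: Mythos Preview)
Your proposal is correct and follows exactly the approach of \cite[Proposition~4.1]{DLM04a}, which is precisely what the paper invokes (it gives no independent proof of this proposition). The two ingredients you identify---the spectral gap from Theorem~\ref{specDirac} combined with Schwartz decay of $F$ to control $F(D_p)-P_p$ in all Sobolev norms uniformly in $p$, and finite propagation speed of $e^{ivD_p}$ together with $\supp\mathbf{f}\subset[-\var,\var]$ to force $F(D_p)(x,x')=0$ when $d(x,x')>\var$---are exactly the ones used there. One small remark on phrasing: the function $\phi$ with $\phi(D_p)=F(D_p)-P_p$ is not literally a Schwartz function vanishing near $0$ (it equals $F$ away from $0$ and $0$ at $0$, hence has a jump); the correct statement is that on $\spec(D_p)\setminus\{0\}\subset\{|\lambda|\geqslant\sqrt{2p\mu_0-C}\}$ one can bound $|\lambda^{2N}F(\lambda)|$ uniformly and then trade $D_p^{2N}$ for Sobolev derivatives via elliptic regularity, which is what you go on to say anyway.
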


We consider the orthogonal projection:
\begin{equation}\label{2c3}
I_{\C\otimes E}: \bE:=\Lambda (T^{*(0,1)}X)\otimes E
\longrightarrow \C\otimes E\,.
\end{equation}
Let $\pi : TX\times_{X} TX \to X$ be the natural projection from the
fiberwise product of $TX$ on $X$.
  Let $\nabla ^{\End (\bE)}$ be the connection on
$\End (\Lambda (T^{*(0,1)}X)\otimes E)$ induced by
$\nabla ^{\text{Cliff}}$ and $\nabla ^E$.

Let us elaborate on the identifications we use in the sequel, 
which we state as a Lemma.
\begin{lemma}\label{0l3.0}
Let $x_0\in X$ be fixed and consider 
the diffeomorphism $B^{T_{x_0}X}(0,4\varepsilon)\ni Z 
\to \exp^X_{x_0}(Z)\in B^{X}(x_0,4\varepsilon)$. 
We denote the pull-back of the vector
bundles $L$, $E$ and $E_p$ via this diffeomorphism by the same symbols. 
\begin{itemize}
        \item[(i)] There exist trivializations of $L$, $E$ and $E_p$ over 
$B^{T_{x_0}X} (0,4\varepsilon)$ given by unit frames 
which are parallel with respect to $\nabla ^L$, $\nabla^E$ and
$\nabla^{E_p}$ along the curves $\gamma_Z:[0,1]\to B^{T_{x_0}X}(0,4\varepsilon)$ defined for every $Z\in B^{T_{x_0}X}(0,4\varepsilon)$ by
$\gamma_Z(u)=\exp^X_{x_0} (uZ)$.
        \item[(ii)] With the previous trivializations, 
$P_p(x,x')$ induces a smooth section 
$B^{T_{x_0}X}(0,4\var)\ni Z,Z'\mapsto P_{p,\,x_0}(Z,Z')$
of $\pi ^* (\End (\Lambda (T^{*(0,1)}X)\otimes E))$ over $TX\times_{X} TX$, which depends smoothly on $x_0$.
\item[(iii)] $\nabla ^{\End (\bE)}$
induces naturally a $\cC^m$-norm with respect to the parameter $x_0\in X$.
\item[(iv)] If $dv_{TX}$ is the Riemannian volume form 
on $(T_{x_0}X, g^{T_{x_0}X})$, there exists 
a smooth positive function $\kappa_{x_0}:T_{x_0}X\to\R$, $Z\mapsto\kappa_{x_0}(Z)$ defined by 
\begin{equation} \label{atoe2.7}
dv_X(Z)= \kappa_{x_0}(Z) dv_{TX}(Z),\quad \kappa_{x_0}(0)=1,
\end{equation}
where the subscript $x_0$ of  
$\kappa_{x_0}(Z)$ indicates the base point $x_0\in X$.
\item[(v)] By \eqref{to1.2}, $\bJ$ is an element of\/ $\End(T^{(1,0)}X)$.  
Consequently, we can diagonalize $\bJ_{x_0}$\/, i.e., choose an orthonormal
basis $\{w_j\}_{j=1}^n$ of $T^{(1,0)}_{x_0}X$ such that 
\begin{equation} \label{3ue60}
\bJ_{x_0}\om_j =\frac{\sqrt{-1}}{2\pi} a_j(x_0) w_j\,, \quad\text{for all $j=1,2,\ldots,n$}\,,
\end{equation}
where $0<a_1(x_0)\leqslant a_2(x_0)\leqslant\dotsc\leqslant a_n(x_0)$.
Then $\{e_{j}\}_{j=1}^{2n}$ defined in \eqref{frame1}
forms an orthonormal basis of $T_{x_0}X$.
The diffeomorphism 
\begin{equation}\label{alm4.22}
\R^{2n}\ni(Z_1,\dotsc, Z_{2n}) \longmapsto \sum_i
Z_i e_i\in T_{x_0}X
\end{equation}
induces coordinates on $T_{x_0}X$, which we use throughout the paper. In these coordinates we have $e_j={\partial}/{\partial Z_j}$\,.
\end{itemize}
\end{lemma}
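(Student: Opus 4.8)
The plan is to assemble (i)--(v) from standard properties of the exponential map and of parallel transport; nothing here lies deep, so I will only indicate the mechanism behind each item. First I would construct the trivializations in (i) by \emph{radial} parallel transport: for $\mathcal{G}\in\{L,E\}$ fix a unit vector, respectively an orthonormal basis, of the fibre $\mathcal{G}_{x_0}$ and extend it over $B^{T_{x_0}X}(0,4\varepsilon)$ by parallel transport with respect to $\nabla^{\mathcal{G}}$ along the rays $\gamma_Z(u)=\exp^X_{x_0}(uZ)$. In geodesic normal coordinates centred at $x_0$ this parallel frame solves a linear first-order ODE in $u$ whose coefficients are the connection one-form of $\mathcal{G}$ read off in those coordinates; existence, uniqueness and smooth dependence on the initial data and on the parameter $x_0$ follow from the elementary theory of linear ODEs, using that $Z\mapsto\exp^X_{x_0}(Z)$ and the connection coefficients depend smoothly on $x_0$. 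Since the connections are Hermitian, radial parallel transport is a linear isometry, so the resulting frames are unit frames. For $E_p$ one repeats the argument with $\nabla^{E_p}$; as $\nabla^{E_p}$ is the tensor product of $\nabla^{\text{Cliff}}$, $(\nabla^L)^{\otimes p}$ and $\nabla^E$, the induced trivialization of $E_p$ is simply the tensor product of the radial-parallel trivializations of $\Lambda^{0,\bullet}$, $L^p$ and $E$.

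Next, for (ii) I would restrict $P_p(\cdot,\cdot)$ to $B^{X}(x_0,4\varepsilon)\times B^{X}(x_0,4\varepsilon)$ and express it in the trivializations of (i). Because the $L^p$-factors cancel in $\pi_1^*E_p\otimes\pi_2^*E_p^*$, this produces a smooth map $(Z,Z')\mapsto P_{p,x_0}(Z,Z')$ with values in $\End(\Lambda(T^{*(0,1)}X)\otimes E)_{x_0}=\End(\bE)_{x_0}$, i.e.\ a section of $\pi^*\End(\bE)$ over an open subset of $TX\times_X TX$; smoothness jointly in $(x_0,Z,Z')$ is inherited from the smoothness of $P_p(\cdot,\cdot)$ on $X\times X$ together with the smooth $x_0$-dependence of the trivializing frames just established. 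The one item that needs genuine care is (iii), since the fibre $\End(\bE)_{x_0}$ and the domain $B^{T_{x_0}X}(0,4\varepsilon)$ both move with $x_0$, so ``differentiating in $x_0$'' has to be given a meaning: here one uses the connection $\nabla^{\End(\bE)}$ induced by $\nabla^{\text{Cliff}}$ and $\nabla^E$ to take iterated covariant derivatives of the family $x_0\mapsto\big(Z,Z'\mapsto P_{p,x_0}(Z,Z')\big)$, which yields a well-defined $\cC^m$-norm in the parameter. This bookkeeping is carried out in detail in \cite[\S 1]{DLM04a} and \cite[\S 4.1]{MM:07}, and it is precisely against this norm that Proposition \ref{0t3.0} and the asymptotic expansion recalled below are phrased; it is the only place in the proof where one must be attentive.

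Finally, (iv) is just the definition of the Jacobian density of $\exp^X_{x_0}$: comparing $dv_X$ with the Euclidean form $dv_{TX}$ in normal coordinates gives a smooth positive function $\kappa_{x_0}$ on $B^{T_{x_0}X}(0,a^X)$ as in \eqref{atoe2.7}, and $\kappa_{x_0}(0)=1$ because $d(\exp^X_{x_0})_0=\Id_{T_{x_0}X}$. For (v), I would note that by \eqref{to1.2} and $\bJ=J(-\bJ^2)^{-1/2}$ the map $\bJ$ commutes with $J$ and hence preserves $T^{(1,0)}X$; being skew-adjoint and $J$-commuting, $\frac{\sqrt{-1}}{2\pi}\bJ$ restricted to $T^{(1,0)}_{x_0}X$ is Hermitian, and it is positive by the condition $\om(\cdot,J\cdot)>0$ from \eqref{bk1.1}. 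Therefore it diagonalizes in an orthonormal basis $\{w_j\}_{j=1}^n$ of $T^{(1,0)}_{x_0}X$ with positive eigenvalues, which we list in increasing order $a_1(x_0)\leqslant\dots\leqslant a_n(x_0)$; this is \eqref{3ue60}. The real frame $\{e_j\}_{j=1}^{2n}$ from \eqref{frame1} is then orthonormal in $T_{x_0}X$, and the linear isomorphism \eqref{alm4.22} carries the Euclidean coordinates of $\R^{2n}$ onto $T_{x_0}X$ with $e_j=\partial/\partial Z_j$ by construction. Assembling these observations gives the lemma.
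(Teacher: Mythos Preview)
The paper states this lemma without proof, treating it as a packaging of standard identifications (the sentence introducing it reads ``Let us elaborate on the identifications we use in the sequel, which we state as a Lemma''). Your proposal correctly supplies the elementary mechanisms behind each item and is in the same spirit as what the authors tacitly assume; there is no alternative approach to compare against.

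Two small slips worth fixing. In (v) you wrote $\bJ=J(-\bJ^2)^{-1/2}$, but the relation in the paper is $J=\bJ(-\bJ^2)^{-1/2}$; this does not affect your argument, since all you need is that $\bJ$ and $J$ commute (which follows already from \eqref{to1.2} together with $J$-compatibility of $g^{TX}$ and $\omega$). Also, the Hermitian operator on $T^{(1,0)}_{x_0}X$ with \emph{positive} eigenvalues $a_j(x_0)/2\pi$ is $-\sqrt{-1}\,\bJ$, not $\frac{\sqrt{-1}}{2\pi}\bJ$: from \eqref{3ue60} one has $-\sqrt{-1}\,\bJ\,w_j=\tfrac{1}{2\pi}a_j(x_0)w_j$. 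With these corrections your argument for (v) is exactly right: $\bJ$ is skew-adjoint for $g^{TX}$ and commutes with $J$, hence is skew-Hermitian on $T^{(1,0)}_{x_0}X$, so $-\sqrt{-1}\bJ$ is Hermitian there; positivity comes from $\omega(\cdot,J\cdot)>0$.
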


\noindent
Let $\nabla_U$ denote the ordinary differentiation
operator on $T_{x_0}X$ in the direction $U$.
We introduce the \textbf{\em model operator} $\cL$
 on $T_{x_0}X\simeq \R^{2n}$ by setting
\begin{equation} \label{3ue63}
\begin{split}
&\nabla_{0,U}:= \nabla_{U} +\frac{1}{2}R^L_{x_0}(Z,U)\,,
\quad\text{for $U\in T_{x_0}X$},\\
&\cL:=- \sum_j (\nabla_{0,e_j} )^2   -\tau(x_0)\,.
\end{split}\end{equation}
By \eqref{to1.13} and \eqref{3ue60}, $\tau(x_0)=\sum_j a_j(x_0)$.
The operator $\cL$ defined in \eqref{3ue63} coincides with the operator $\cL$ given by \eqref{toe1.1} and \eqref{toe1.1a}, with $a_j=a_j(x_0)$ for $1\leqslant j\leqslant n$.

We denote by $\det_{\C}$ for the determinant function on the complex
bundle $T^{(1,0)}X$ and set $|\bJ_{x_0}|=(-\bJ^2_{x_0})^{1/2}$.
The Bergman kernel $T_{x_0}X\ni Z,Z'\mapsto\cP(Z,Z')$
of $\cL$ has the following form in view of \eqref{toe1.3}:
\begin{equation}\label{3ue62}
\cP(Z,Z')  ={\det}_{\C} (|\bJ_{x_0}|)
\exp\Big (- \frac{\pi}{2} \left \langle |\bJ_{x_0}|(Z-Z'),(Z-Z') \right \rangle
-\pi \sqrt{-1} \left \langle \bJ_{x_0} Z,Z' \right \rangle\Big ).
\end{equation}

\noindent
By \cite[Theorem 4.18$^\prime$]{DLM04a} we have the 
\textbf{\em off diagonal expansion} of the Bergman kernel:
\begin{thm} \label{tue17}
Let $\varepsilon\in]0, a_X/4[$. For every $x_0\in X$ and $r\in\N$ there exist polynomials $J_{r,\,x_0}(Z,Z^{\prime})\in
\End (\Lambda (T^{*(0,1)}X) \otimes E)_{x_0}$ ,
in $Z,Z^{\prime}$ with the same parity as $r$
and with $\deg J_{r,\,x_0}\leqslant 3r$, whose coefficients
are polynomials in $R^{TX}$, $R^{T^{(1,0)}X}$,
$R^E$ {\rm (}and $R^L${\rm )}  and their derivatives of order
$\leqslant r-1$  {\rm (}resp. $\leqslant r${\rm )} and reciprocals
of linear combinations of eigenvalues of $\bJ$  at $x_0$, 
 such that by setting
\begin{equation}\label{a0.7}
\cP^{(r)}_{x_0}(Z,Z^{\prime})=J_{r,\,x_0}(Z,Z^{\prime})\cP(Z,Z^{\prime}),
\quad J_{0,\,x_0}(Z,Z^{\prime})
=I_{\C\otimes E}\,\, ,
\end{equation}
the following statement holds:
There exists $C''>0$ such that for every $k,m,m'\in\N$, there exist $N\in\N$ 
and $C>0$ such that the following estimate holds
\begin{equation}\label{aue66}
\begin{split}
\left |\frac{\partial^{|\alpha|+|\alpha'|}}
{\partial Z^{\alpha} {\partial Z'}^{\alpha'}}
\left (\frac{1}{p^n}  P_p(Z,Z')
-\sum_{r=0}^k  \cP^{(r)} (\sqrt{p} Z,\sqrt{p} Z')
\kappa ^{-1/2}(Z)\kappa ^{-1/2}(Z')
p^{-r/2}\right )\right |_{\cC^{m'}(X)}\\
\leqslant C  p^{-(k+1-m)/2}  (1+|\sqrt{p} Z|+|\sqrt{p} Z'|)^N
\exp (- \sqrt{C''\mu_0 } \sqrt{p} |Z-Z'|)+ \cO(p^{-\infty}),
\end{split}
\end{equation}
for any $\alpha,\alpha'\in\N^{n}$, with $|\alpha|+|\alpha'|\leqslant m$, 
any $Z,Z'\in T_{x_0}X$ with $|Z|, |Z'|\leqslant  \var$ 
and any $x_0\in X$, $p\geqslant 1$.
\end{thm}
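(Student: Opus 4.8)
The plan is to prove Theorem \ref{tue17} by the analytic localization and rescaling technique. \textbf{Localization.} By the spectral gap $\spec(D_p^2)\subset\{0\}\cup[2p\mu_0-C,\infty)$ of Theorem \ref{specDirac}, the even Schwartz function $F$ of \eqref{0c3} (whose Fourier transform is supported in $[-\var,\var]$) satisfies $\|F(D_p)-P_p\|\leq C_l p^{-l}$ in every $\cC^m$-norm, which is Proposition \ref{0t3.0}; moreover finite propagation speed for $\cos(u\sqrt{D_p^2})$ shows that $F(D_p)(x,x')$ vanishes for $d(x,x')\geq\var$ and, near a fixed $x_0$, depends only on the restriction of $D_p$ to $B^X(x_0,\var)$. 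Hence it suffices, for each fixed $x_0$, to transplant $D_p$ to $T_{x_0}X\cong\R^{2n}$ via the exponential map and the parallel trivialization of $E_p$ of Lemma \ref{0l3.0}, then modify it outside $B(0,2\var)$ so as to obtain a uniformly elliptic, formally self-adjoint operator $D_{p,x_0}$ on all of $\R^{2n}$ whose square still satisfies $\spec(D_{p,x_0}^2)\subset\{0\}\cup[2p\mu_0-C,\infty)$ for $p$ large; this changes $F(D_p)$ near $x_0$ only by $\mO(p^{-\infty})$.

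\textbf{Rescaling and the formal expansion.} Put $t=p^{-1/2}$, let $(S_t s)(Z)=s(Z/t)$, and set $\cL_t:=S_t^{-1}\,t^2\,\kappa^{1/2}D_{p,x_0}^2\,\kappa^{-1/2}S_t$; the factor $\kappa^{1/2}$ (see \eqref{atoe2.7}) makes $\cL_t$ formally self-adjoint for $dZ$, and is the origin of the $\kappa^{-1/2}(Z)\kappa^{-1/2}(Z')$ factors in \eqref{aue66}. Taylor-expanding the metric, connection one-forms and curvatures at $x_0$ gives $\cL_t=\cL_0+\sum_{i=1}^{m}t^i\cO_i+\mO(t^{m+1})$, where $\cL_0=\cL$ is exactly the model operator \eqref{3ue63}$=$\eqref{toe1.1a} (with $a_j=a_j(x_0)$) on the $\C\otimes E$-component, plus a part with a spectral gap on $\Lambda^{0,>0}$, and each $\cO_i$ is a second-order differential operator with polynomial coefficients in $Z$ of degree $\leq i+2$ whose constant coefficients are universal polynomials in $R^{TX},R^{T^{(1,0)}X},R^E,R^L$ and their derivatives of order $\leq i$ at $x_0$. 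Choosing a circle $\delta$ around $0$ lying in the gap of all the $\cL_t$, the rescaled Bergman kernel equals, modulo $\kappa$-factors and $\mO(p^{-\infty})$, the kernel of $\boldsymbol{P}_t=\frac{1}{2\pi\sqrt{-1}}\oint_\delta(\lambda-\cL_t)^{-1}\,d\lambda$.

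\textbf{Identifying the coefficients.} Expanding $(\lambda-\cL_t)^{-1}=(\lambda-\cL_0)^{-1}+\sum_{r\geq1}t^r\cC_r(\lambda)+\cdots$, where $\cC_r(\lambda)$ is a finite sum of terms $(\lambda-\cL_0)^{-1}\cO_{i_1}(\lambda-\cL_0)^{-1}\cdots\cO_{i_k}(\lambda-\cL_0)^{-1}$ with $\sum i_j=r$, and integrating over $\delta$, the $t^r=p^{-r/2}$ term has Schwartz kernel of the form $J_{r,x_0}(Z,Z')\cP(Z,Z')$. Indeed $(\lambda-\cL_0)^{-1}$ is diagonal in the harmonic-oscillator basis \eqref{bk2.69}, the $\cO_{i_j}$ are polynomial-coefficient differential operators, and $\frac{1}{2\pi\sqrt{-1}}\oint_\delta(\lambda-\cL_0)^{-1}d\lambda=\cP\otimes I_{\C\otimes E}$; repeated use of this and of the commutation relations \eqref{toe1.7} keeps every term of the shape ``polynomial $\times\,\cP$'', which is precisely the kernel calculus packaged in Lemma \ref{toet1.1}. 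Tracking degrees through the $k\leq r$ factors $\cO_{i_j}$ (each raising the polynomial degree by $\leq i_j+2\leq3i_j$) gives $\deg J_{r,x_0}\leq 3r$; conjugating by $Z\mapsto-Z$, under which $\cO_i\mapsto(-1)^i\cO_i$ while $\cP$ from \eqref{3ue62} is even, gives the parity statement; and $r=0$ yields $J_{0,x_0}=I_{\C\otimes E}$, so \eqref{a0.7} holds.

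\textbf{Uniform remainder estimate (the hard part).} It remains to upgrade the formal expansion to \eqref{aue66}. I would work with $t$-dependent weighted Sobolev norms $\|s\|_{t,m}=\sum_{|\beta|\leq m}\|\prod(t\nabla_{e_j})^{\beta}s\|_{L^2}$ and prove: (i) $\cL_t$ is uniformly elliptic and $\|(\lambda-\cL_t)^{-1}\|$ is bounded between suitable weighted spaces, uniformly for $\lambda\in\delta$ and $t\in(0,t_0]$; (ii) the remainder $\cL_t-\cL_0-\sum_{i\leq m}t^i\cO_i$ has operator norm $\mO(t^{m+1})$ between these spaces; (iii) Sobolev embedding on $\R^{2n}$ converts the $L^2$-bounds into the $\cC^{m'}$-bound on kernels, the polynomial weights $(1+|\sqrt pZ|+|\sqrt pZ'|)^N$ absorbing the polynomial growth of the coefficients of $\cO_i$ and of $J_{r,x_0}$; (iv) the Gaussian factor $\exp(-\sqrt{C''\mu_0}\sqrt p\,|Z-Z'|)$ comes from an Agmon/Kato-type estimate: conjugating $\cL_t$ by $e^{c|Z-Z'|}$ with $c$ small preserves the uniform resolvent bound because the model $\cL_0$ already has the Gaussian kernel \eqref{3ue62}. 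Differentiating the whole construction in $x_0$ gives the smooth dependence on $x_0$ and the $\cC^{m'}(X)$-norm in \eqref{aue66}. The main obstacle is step (i): the resolvent bound must be \emph{uniform in $t$}, although $\cL_t$ loses ellipticity to first order in the directions tangent to $\ke\cL_0$ as $t\to0$; this is handled by a coercivity argument exploiting that $\cL_0$ is a shifted harmonic oscillator with spectral gap $\{0\}\cup[\mathrm{const}\cdot\mu_0,\infty)$, so that after splitting $s$ into its $\ke\cL_0$-component and its orthogonal complement the dangerous lower-order-in-$t$ terms are dominated by the quadratic part of $\cL_0$.
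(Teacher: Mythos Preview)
The paper does not itself prove Theorem~\ref{tue17}; it is quoted verbatim from \cite[Theorem~4.18$'$]{DLM04a} (see the sentence immediately preceding the statement). Your outline is precisely the strategy of that reference (and of \cite[Ch.~4]{MM:07}): spectral gap plus finite propagation speed localize $P_p$ to $F(D_p)$ modulo $\mO(p^{-\infty})$; transplant to $T_{x_0}X$ and rescale by $t=p^{-1/2}$ to produce $\cL_t=\cL_2^0+\sum_i t^i\mO_i$; expand the spectral projector of $\cL_t$ via the contour integral $\frac{1}{2\pi\sqrt{-1}}\oint_\delta(\lambda-\cL_t)^{-1}d\lambda$; and upgrade the formal series to \eqref{aue66} via uniform-in-$t$ weighted Sobolev estimates and Sobolev embedding. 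So there is nothing to compare against in the present paper, and what you wrote is a correct high-level summary of the cited proof.

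Two small points where your sketch is slightly imprecise relative to \cite{DLM04a}. First, the model operator is $\cL_2^0=\cL-2\om_d$ acting on all of $\Lambda(T^{*(0,1)}X)\otimes E$ (cf.~\eqref{1c31}), not $\cL$ on the $\C\otimes E$-component alone; the projection $I_{\C\otimes E}$ in \eqref{a0.7} emerges because $\ke(\cL_2^0)=(\ke\cL)\otimes(\C\otimes E)$, as you essentially say. Second, your degree count ``each $\mO_{i_j}$ raises degree by $\leq i_j+2\leq 3i_j$'' is valid since each $i_j\geq1$, but the full argument also uses that every resolvent factor $(\lambda-\cL_2^0)^{-1}$, when restricted via the spectral projectors, maps (polynomial)$\times\cP$ to (polynomial of the same degree)$\times\cP$; this is where the kernel calculus behind Lemma~\ref{toet1.1} enters. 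Finally, in \cite{DLM04a} the exponential factor $\exp(-\sqrt{C''\mu_0}\sqrt{p}\,|Z-Z'|)$ is obtained via weighted-norm estimates rather than an explicit Agmon conjugation of the resolvent, but the mechanism is the same in spirit and your description is acceptable as an outline.
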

Here $\cC^{m'}(X)$ is the  $\cC^{m'}$-norm for the parameter $x_0\in X$.
We say that a term $G_p=\cO(p^{-\infty})$ if for any $l,l_1\in\N$, there 
exists $C_{l,l_1}>0$ such that the $\cC^{l_1}$-norm of $G_p$ is dominated 
by $C_{l,l_1} p^{-l}$.
\begin{rem}\label{absyt1.2} Set 
$\bE^+:=\oplus_{j}\Lambda^{2j}(T^{*(0,1)}X)\otimes E$ and 
$\bE^-:= \oplus_{j}\Lambda^{2j+1}(T^{*(0,1)}X)\otimes E$;
$E^-_p:=\bE^-\otimes L^p$ and  $E^+_p:=\bE^+\otimes L^p$.
By Theorem \ref{specDirac} and because $D_{p}^2$ 
preserves the $\Z_2$-grading of $\Omega^{0,\bullet}(X$, $L^p\otimes E)$, 
$P_p$ is the orthogonal projection 
from $\cC^\infty(X,E^+_p)$ onto $\ke(D_p)$ for $p$ large enough.  
Thus $P_p(x,x)\in \End(\bE^+)_{x}$ 
and $J_r(Z,Z^\prime)\in \End(\bE^+)_{x_0}$ for $p$ large enough.
\end{rem}

Let $\nabla^X\bJ\in T^*X \otimes \End(TX)$ be the covariant derivative 
of $\bJ$ induced by $\nabla^{TX}$. 
We denote by
$\mR= \sum_i Z_i e_i =Z$ the radial vector field on $\R^{2n}$.

For $s\in \cC^{\infty}( T_{x_0}X, \bE_{x_0}) $, set
\begin{align}\label{u0}
&\|s\|_{0,0}^2 = \int_{\R^{2n}} 
|s(Z)|^2_{h^{\Lambda ( T^{*(0,1)}X)\otimes E}_{x_0}} dv_{TX}(Z).
\end{align}

We adopt
the convention that all tensors will be evaluated at the base point
$x_0\in X$, and most of the time, we will omit the subscript $x_0$. 
{}From \eqref{to1.13} and \eqref{3ue63}, let us set
\begin{align}\label{1c31}
\begin{split}
\cL_2^0 :=& \cL-2\om_d = \sum_j (b_j b^+_j 
+ 2 a_j \overline{w}^j\wedge i_{\overline{w}_j}),\\
\boldsymbol{\mO}_1 := &
-\frac{2}{3}\partial_j (R^L (e_k,e_i))_{x_0}Z_j Z_k\nabla_{0,e_i}
 -\frac{1}{3} \partial_i (R^L (e_j,e_i))_{x_0} Z_j\\
&-\pi\sqrt{-1}\left \langle(\nabla_{\mR}^X \bJ)_{x_0}\,e_l, e_m\right \rangle
\,c(e_l)\,c(e_m) \,.
\end{split}\end{align}
Let $P^N$ be the orthogonal projection from  
$(L^2 (\R^{2n}, \bE_{x_0}),\|\cdot\|_{0,0})$
 onto $N=\ke (\cL^0_2)$, and $P^N(Z,Z^\prime)$ its smooth kernel 
with respect to $dv_{TX}(Z^\prime)$. Let $P^ {N^\bot}= \Id -P^N$. 
Since $a_j>0$ we get from \eqref{1c31} that
\begin{equation}\label{abk2.71}
P^N(Z,Z')= \cP(Z,Z') I_{\C\otimes E}.
\end{equation}

By \cite[Theorem 4.6, (4.107), (4.115) and  (4.117)]{DLM04a}
(or proceeding as in  \cite[(1.111)]{MM04a}, 
or by \cite[Theorem 2.2]{MM05a}), we obtain:
\begin{thm}
The following identity holds\,{\rm:}
\begin{equation}\label{abk2.76}
\begin{split}
&\cP^{(1)}_{x_0}=- P^{N^\bot}(\cL^0_2)^{-1} \boldsymbol{\mO}_1 P^N
-  P^N\boldsymbol{\mO}_1(\cL^0_2)^{-1} P^{N^\bot}\,.
\end{split}
\end{equation}
\end{thm}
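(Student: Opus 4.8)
The plan is to derive \eqref{abk2.76} from the resolvent expansion of the rescaled operator associated to $D_p^2$ near $x_0$. Set $t=1/\sqrt{p}$. Using the spectral gap of Theorem \ref{specDirac} one localizes the Bergman kernel near the diagonal, so that $P_p=\frac{1}{2\pi\sqrt{-1}}\oint_{\delta'}(\lambda-D_p^2)^{-1}\,d\lambda$ for a small contour $\delta'$ about $0$; after the substitution $Z\mapsto tZ$ on $T_{x_0}X\cong\R^{2n}$ one is reduced to a family of operators $\cL_t$ acting on $\cC^\infty(T_{x_0}X,\bE_{x_0})$ whose Taylor expansion in $t$ begins with $\cL_t=\cL^0_2+t\,\boldsymbol{\mO}_1+\cO(t^2)$, where $\cL^0_2$ and $\boldsymbol{\mO}_1$ are the operators in \eqref{1c31}. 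This expansion, together with the uniform (in $t\in(0,1]$ and in $x_0\in X$) weighted Sobolev estimates for $\cL_t$, is exactly what \cite[Theorem\,4.6]{DLM04a} provides (cf.\ also \cite[(1.111)]{MM04a}). The rescaled and $\kappa^{-1/2}$-corrected Bergman kernel of $D_p$ is then the Schwartz kernel of $P_t=\frac{1}{2\pi\sqrt{-1}}\oint_{\delta}(\lambda-\cL_t)^{-1}\,d\lambda$, where $\delta$ is a small counterclockwise circle about $0\in\C$ enclosing no other point of $\spec(\cL^0_2)$. Since $\kappa_{x_0}(Z)=1+\cO(|Z|^2)$, the correction factors only affect the terms of order $t^2$ and higher; hence $\cP^{(1)}_{x_0}$ is the coefficient of $t$ in the expansion of $P_t$.

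Next I would expand the resolvent. From $\cL_t=\cL^0_2+t\,\boldsymbol{\mO}_1+\cO(t^2)$ one gets the Neumann series
\[
(\lambda-\cL_t)^{-1}=(\lambda-\cL^0_2)^{-1}+t\,(\lambda-\cL^0_2)^{-1}\boldsymbol{\mO}_1(\lambda-\cL^0_2)^{-1}+\cO(t^2),
\]
so that $\cP^{(1)}_{x_0}=\frac{1}{2\pi\sqrt{-1}}\oint_{\delta}(\lambda-\cL^0_2)^{-1}\boldsymbol{\mO}_1(\lambda-\cL^0_2)^{-1}\,d\lambda$. Now $\cL^0_2$ is self-adjoint and $\geqslant 0$ with $0$ an isolated eigenvalue whose eigenprojection is $P^N$, and $P^N(Z,Z')=\cP(Z,Z')I_{\C\otimes E}$ by \eqref{abk2.71}; hence for $\lambda\in\delta$
\[
(\lambda-\cL^0_2)^{-1}=\tfrac{1}{\lambda}\,P^N+(\lambda-\cL^0_2)^{-1}P^{N^\bot},
\]
the second summand being holomorphic inside $\delta$, with value $-(\cL^0_2)^{-1}P^{N^\bot}$ at $\lambda=0$ (here $(\cL^0_2)^{-1}$ denotes the inverse of $\cL^0_2$ on $N^\bot$, extended by $0$ on $N$). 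Substituting this splitting into both factors and expanding the product gives four terms: the term $\lambda^{-2}P^N\boldsymbol{\mO}_1 P^N$ has vanishing contour integral; the term $(\lambda-\cL^0_2)^{-1}P^{N^\bot}\boldsymbol{\mO}_1(\lambda-\cL^0_2)^{-1}P^{N^\bot}$ is holomorphic inside $\delta$ and so also integrates to $0$; and the two mixed terms $\lambda^{-1}P^N\boldsymbol{\mO}_1(\lambda-\cL^0_2)^{-1}P^{N^\bot}$ and $\lambda^{-1}(\lambda-\cL^0_2)^{-1}P^{N^\bot}\boldsymbol{\mO}_1 P^N$ contribute their residues at $\lambda=0$, namely $-P^N\boldsymbol{\mO}_1(\cL^0_2)^{-1}P^{N^\bot}$ and $-P^{N^\bot}(\cL^0_2)^{-1}\boldsymbol{\mO}_1 P^N$. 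Adding the four contributions yields exactly \eqref{abk2.76}.

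The main obstacle is not this last contour computation, which is elementary, but the analytic input behind it: that the formal Neumann series represents the genuine resolvent of $\cL_t$ with remainders bounded uniformly in $t$ and $x_0$ on the relevant weighted Sobolev spaces, and that the resulting expansion of (the kernel of) $P_t$ matches, term by term and with the growth/decay estimates of Theorem \ref{tue17}, the expansion of $\frac{1}{p^n}P_p(Z,Z')$. This is precisely the content of \cite[Theorem\,4.6 and (4.107), (4.115), (4.117)]{DLM04a} (see also \cite[Theorem\,2.2]{MM05a}), which I would invoke; what remains is only the extraction of the $t^1$-coefficient carried out above.
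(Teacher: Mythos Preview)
Your proposal is correct and follows precisely the approach of the references the paper cites (the paper itself gives no independent proof, only the citation to \cite[Theorem~4.6, (4.107), (4.115), (4.117)]{DLM04a}, \cite[(1.111)]{MM04a}, \cite[Theorem~2.2]{MM05a}). You have made explicit the resolvent/contour-integral computation that those references contain, correctly identified that the analytic justification (uniform weighted Sobolev bounds for $\cL_t$ and term-by-term matching with the expansion of $p^{-n}P_p$) is the substantive input supplied by \cite{DLM04a}, and extracted the $t^1$-coefficient accurately.
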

\begin{rem}\label{rue}
It is interesting to observe the role of $\boldsymbol{\mO}_1$ 
in different geometric situations.
Firstly, if $(X,J,\omega)$ is K{\"a}hler, $\bJ=J$ and $L$, $E$ 
are holomorphic vector bundles, we have $\boldsymbol{\mO}_1=0$.
Secondly, if $(X,J,\omega)$ is symplectic and $E$ is trivial, we do not need
the precise formula of $\boldsymbol{\mO}_1$ for the proof of
 Lemma \ref{toe2.5}, but just the information that
 $\boldsymbol{\mO}_1$ acts as the identity on $E$. 
Thirdly, to compute the coefficient $J_{2,\,x_0}(0,0)$ in \eqref{a0.7} 
as in \cite[Theorem 2.1]{MM05a}, we need certainly 
the precise formula of $\boldsymbol{\mO}_1$ given in \eqref{1c31}.

Finally, for the proof of Theorem \ref{toet4.1}, the precise formulas
for $\boldsymbol{\mO}_1$ or $\cP^{(1)}_{x_0}$ are not needed 
(cf. Remark \ref{toet2.6} and formulas \eqref{toe4.12a}, \eqref{toe4.14}).
\end{rem}

\section{Berezin-Toeplitz quantization on symplectic manifolds} \label{ts2}

We give a brief summary of this Section.
We begin in Section \ref{toes2} by establishing the asymptotic expansion 
for the kernel of Toeplitz operators.
In Section \ref{toes3}, we show that the asymptotic expansion 
is also a sufficient condition for a family of operators to be Toeplitz.
Finally, in Section \ref{toes4}, we conclude that set of Toeplitz operators forms 
an algebra. 
\subsection{Asymptotic expansion of Toeplitz operators} \label{toes2}
In this Section we define the Toeplitz operators and deduce 
the asymptotic expansion of their Schwartz kernels.

We use the same setting and notations as in Section \ref{s3}.
Let $(X,J, \om)$ be a compact symplectic manifold of real dimension $2n$, 
a Hermitian line bundle $(L,h^L)$ over $X$ endowed with
a Hermitian connection $\nabla^L$ with curvature $R^L=(\nabla^L)^2$
satisfying the prequantization condition \eqref{bk1.1}.
Let $g^{TX}$ be an arbitrary Riemannian metric on $X$ 
compatible with the almost complex structure $J$.
We consider a Hermitian vector bundle $(E,h^E)$ 
on $X$ with Hermitian connection $\nabla^E$, 
and the space $\big(L^2(X,E_p),\langle\cdot,\cdot\rangle\big)$
introduced in \eqref{l2}.

\noindent
A section $g\in\cC^\infty(X,\End(E))$ defines a vector bundle morphism
$\Id_{\Lambda (T^{*(0,1)}X)\otimes L^p}\otimes g$ of 
$E_p:=\Lambda (T^{*(0,1)}X)\otimes L^p\otimes E$, which we still denote by $g$.
\begin{defn}\label{toe-def}
A \textbf{\em Toeplitz operator}\index{Toeplitz operator} 
is a sequence $\{T_p\}=\{T_p\}_{p\in\N}$ of linear operators
\begin{equation}\label{toe2.1}
T_{p}:L^2(X,E_p)\longrightarrow L^2(X,E_p)\,,
\end{equation} 
with the properties:
\begin{itemize}
\item[(i)] For any $p\in \N$, we have 
\begin{equation}\label{toe2.2}
T_{p}=P_p\,T_p\,P_p\,.
\end{equation} 
\item[(ii)] There exist a sequence $g_l\in\cC^\infty(X,\End(E))$ such that
for all $k\geqslant0$ there exists $C_k>0$ with
\begin{equation}\label{toe2.3}
\Big\|T_p-P_p\Big(\sum_{l=0}^k p^{-l}g_l\Big) P_p\Big\|
\leqslant C_k\,p^{-k-1},
\end{equation}
where $\norm{\cdot}$ denotes the operator norm on the space of 
bounded operators.
\end{itemize}
The full symbol of $\{T_p\}$ is the formal series 
$\sum_{l=0}^\infty \hbar^{l}g_l\in\cC^\infty(X,\End(E))[[\hbar]]$ 
and the \textbf{\em principal symbol}\index{principal symbol} 
of $\{T_p\}$ is $g_0$.  
If each $T_p$ is self-adjoint, $\{T_p\}$ is called self-adjoint.
\end{defn}
\noindent
We express \eqref{toe2.3} symbolically by
\begin{equation}\label{atoe2.1}
T_p= P_p\Big(\sum_{l=0}^k p^{-l}g_l\Big) P_p+\mO(p^{-k-1}).
\end{equation}
If \eqref{toe2.3} holds for any $k\in \N$, then we write
\begin{equation}\label{atoe2.2}
T_p= P_p\Big(\sum_{l=0}^\infty p^{-l}g_l\Big) P_p+\mO(p^{-\infty}).
\end{equation}

\noindent
An important particular case is when $g_l=0$ for $l\geqslant1$. 
We set $g_0=f$. We denote then
\begin{equation}\label{toe2.4}
T_{f,\,p}:L^2(X,E_p)\longrightarrow L^2(X,E_p)\,,
\quad T_{f,\,p}=P_p\,f\,P_p\,.
\end{equation} 
The Schwartz kernel of $T_{f,\,p}$ is given by
\begin{equation} \label{toe2.5}
T_{f,\,p}(x,x')=\int_XP_p(x,x'')f(x'')P_p(x'',x')\,dv_X(x'')\,.
\end{equation}
Let us remark that if $f\in\cC^{\infty}(X,\End(E))$ is self--adjoint,
i.e. $f(x)=f(x)^*$ for all $x\in X$, then the operators 
$\Id_{\Lambda (T^{*(0,1)}X)\otimes L^p}\otimes f$ 
and $T_{f,\,p}$ are self--adjoint.

The map which associates to a section $f\in \cC^{\infty}(X,\End(E))$ 
the bounded operator $T_{f,\,p}$ on $L^2(X,E_p)$ is called 
the  \textbf{\em Berezin-Toeplitz quantization}.

We examine now the asymptotic expansion of the kernel of the Toeplitz 
operators $T_{f,\,p}$.
The first observation is that outside the diagonal of $X\times X$,
the kernel of $T_{f,\,p}$ has the growth $\cO(p^{-\infty})$. 
\begin{lemma} \label{toet2.1}
For every $\varepsilon>0$ and every $l,m\in\N$, 
there exists $C_{l,m,\varepsilon}>0$ such that 
\begin{equation} \label{toe2.6}
|T_{f,\,p}(x,x')|_{\cC^m(X\times X)}\leqslant C_{l,m,\varepsilon}p^{-l}
\end{equation}
for all $p\geqslant 1$ and all $(x,x')\in X\times X$ 
with $d(x,x')>\varepsilon$,
where the $\cC^m$-norm is induced by $\nabla^L,\nabla^E$ and $h^L,h^E,g^{TX}$.
\end{lemma}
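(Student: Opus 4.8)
The plan is to reduce the off-diagonal estimate for $T_{f,\,p}(x,x')$ to the already-established far off-diagonal estimate for the Bergman kernel in Proposition \ref{0t3.0}. Recall from \eqref{toe2.5} that
\begin{equation*}
T_{f,\,p}(x,x')=\int_X P_p(x,x'')\,f(x'')\,P_p(x'',x')\,dv_X(x'')\,.
\end{equation*}
The idea is that, when $d(x,x')>\varepsilon$, for every $x''\in X$ at least one of the two distances $d(x,x'')$, $d(x'',x')$ exceeds $\varepsilon/2$, so in the integrand at least one Bergman kernel factor is $\cO(p^{-\infty})$ in $\cC^m$, while the other factor, together with the smooth bounded section $f$, is polynomially bounded in $p$ (indeed $\abs{P_p(x,x)}\leqslant C p^n$ by the on-diagonal expansion implicit in Theorem \ref{tue17}).

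First I would fix $\varepsilon>0$ and partition $X$ (via a smooth partition of unity, or simply by splitting the integral) into the two regions $U_1=\{x'':d(x,x'')\leqslant\varepsilon/2\}$ and $U_2=\{x'':d(x,x'')>\varepsilon/2\}$. On $U_2$, apply \eqref{1c3} of Proposition \ref{0t3.0} to the factor $P_p(x,x'')$: it is bounded by $C_{l',m,\varepsilon}\,p^{-l'}$ in $\cC^m$; the remaining factor $f(x'')P_p(x'',x')$ is bounded in $\cC^m$ by $C\,p^{N}$ for some fixed $N$ (using that $f$ and its derivatives are bounded and that $\abs{P_p(\cdot,\cdot)}_{\cC^m}\leqslant C p^{n+m}$, which follows from Theorem \ref{tue17} with $Z=Z'=0$ after rescaling, or from \cite{DLM04a}); integrating over the compact $X$ and choosing $l'=l+N$ gives the desired bound $\cO(p^{-l})$. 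On $U_1$, one has $d(x'',x')\geqslant d(x,x')-d(x,x'')>\varepsilon/2$, so the symmetric argument applies to the factor $P_p(x'',x')$ via \eqref{1c3}, and the factor $P_p(x,x'')f(x'')$ is polynomially bounded. The $\cC^m$-norm in $(x,x')$ is handled by differentiating under the integral sign and using that the derivatives of $P_p$ in both variables satisfy the same type of estimates (Proposition \ref{0t3.0} is stated in $\cC^m(X\times X)$, so the derivatives are already controlled).

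The main technical point to be careful about — and the only real obstacle — is the bookkeeping of how the polynomial loss $p^{N}$ from the "good" (on-diagonal-sized) factor is absorbed by an arbitrarily large negative power from the "far off-diagonal" factor: one must invoke \eqref{1c3} with $l$ replaced by a sufficiently large $l'$ depending on $l$, $m$, and the a priori polynomial growth exponent $N=N(n,m)$ of the Bergman kernel and its derivatives. Since the cutoff at scale $\varepsilon/2$ is uniform in $p$ and $x$, the constants remain uniform, and combining the two region estimates yields \eqref{toe2.6}. No new analysis is needed beyond Proposition \ref{0t3.0} and the boundedness of $f\in\cC^\infty(X,\End(E))$.
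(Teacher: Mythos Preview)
Your proposal is correct and follows essentially the same approach as the paper: the paper's proof also invokes the integral representation \eqref{toe2.5}, the far off-diagonal decay \eqref{1c3} of $P_p$, and a global polynomial bound $|P_p(x,x')|_{\cC^m}\leqslant C_m p^{M_m}$ extracted from \eqref{aue66}, then concludes in one line. You have simply written out explicitly the triangle-inequality splitting into $U_1$ and $U_2$ and the absorption of the polynomial loss by taking $l'=l+N$, which the paper leaves implicit.
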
 
\begin{proof}
Due to \eqref{1c3},  \eqref{toe2.6} holds 
if we replace $T_{f,\,p}$ by $P_p$. 
Moreover, from \eqref{aue66}, for any $m\in \N$, 
there exist $C_m>0, M_m>0$ such that 
$|P_p(x,x')|_{\cC^m(X\times X)}<Cp^{M_m}$ for all $(x,x')\in X\times X$.
These two facts and formula \eqref{toe2.5} imply the Lemma. 
\end{proof}

We concentrate next on a neighborhood of the diagonal in order 
to obtain the asymptotic expansion of the kernel $T_{f,\,p}(x,x')$. 

We adhere to the identifications made in Lemma \ref{0l3.0}.
We also identify in the sequel $f\in \cC^\infty(X,\End(E))$ with a family  
$f_{x_0}(Z)\in \End(E_{x_0})$ (with parameter $x_0\in X$)
of functions in $Z$ in normal coordinates near $x_0$.
In general, for functions in the normal coordinates, 
we will add a subscript $x_0$ to indicate the base point $x_0\in X$.

Let $\{\Xi_p\}_{p\in\N}$ be a sequence of linear operators 
$\Xi_p: L^2(X,E_p)\longrightarrow L^2(X,E_p)$
with smooth kernel $\Xi_p(x,y)$ with respect to $dv_X(y)$.

Recall that $\pi : TX\times_{X} TX \to X$ is the
natural projection from the fiberwise product of $TX$ on $X$. 
Under our trivialization, $\Xi_p(x,y)$ induces a smooth section 
$\Xi_{p,\,x_0}(Z,Z^\prime)$ of $\pi^*(\End(\Lambda(T^{*(0,1)}X)\otimes E))$
over $TX\times_{X} TX$ with $Z,Z^\prime\in T_{x_0}X$, $\abs{Z},\abs{Z^{\prime}}<a_X$. 
Recall also that $\cP_{x_0}=\cP$ was defined in \eqref{toe1.3}. 

Consider the following condition for $\{\Xi_p\}_{p\in\N}$.
\begin{condition}\label{coe2.7}
Let $k\in\N$. There exists a family $\{Q_{r,\,x_0}\}_{0\leqslant r\leqslant k,\,x_0\in X}$ such that
\begin{itemize}
\item[(a)] $Q_{r,\,x_0}\in \End(\Lambda(T^{*(0,1)}X)\otimes E)_{x_0}[Z,Z^{\prime}]$, 
\item[(b)] $\{Q_{r,\,x_0}\}_{r\in\N,\,x_0\in X}$ is smooth with respect to the parameter $x_0\in X$,
\item[(c)] there exist constants $\var^\prime\in\,]0,a_X[$ and $C_0>0$ with the following property:
for every $l\in \N$, there exist $C_{k,\,l}>0$, $M>0$ 
such that for every $x_0\in X$, $Z,Z^\prime \in T_{x_0}X$, $\abs{Z},\abs{Z^{\prime}}<\var^\prime$ and
$p\in \N$ the following estimate holds (in the sense of \eqref{aue66}):
\begin{equation} \label{toe2.8}
\begin{split}
&\Big|p^{-n} \Xi_{p,\,x_0}(Z,Z^{\prime})
\kappa_{x_0}^{1/2}(Z)\kappa_{x_0}^{1/2}(Z^{\prime})
-\sum^k_{r=0}(Q_{r,x_0}\cP_{x_0})(\sqrt{p}Z,\sqrt{p}Z^{\prime})
p^{-\frac{r}{2}}\Big|_{\cC^l(X)}\\
\leqslant & \,C_{k,\,l}\,p^{-\frac{k+1}{2}}
(1+\sqrt{p}\,|Z|+\sqrt{p}\,|Z^{\prime}|)^M
\exp(-\sqrt{C_0 p}\,|Z-Z^{\prime}|)+\cO(p^{-\infty})\,.
\end{split}
\end{equation}
\end{itemize}
\end{condition}

\begin{notation}\label{noe2.7}
Assume that $\{\Xi_p\}_{p\in\N}$ is subject to the Condition \ref{coe2.7}.
Then we write 
\begin{equation} \label{toe2.7}
p^{-n} \Xi_{p,x_0}(Z,Z^\prime)\cong \sum_{r=0}^k 
(Q_{r,\,x_0} \cP_{x_0})(\sqrt{p}Z,\sqrt{p}Z^{\prime})p^{-\frac{r}{2}}
+\mO(p^{-\frac{k+1}{2}})\,.
\end{equation}
\end{notation}
The family $\{J_{r,\,x_0}\}_{r\in\N,\,x_0\in X}$ of polynomials 
$J_{r,\,x_0}(Z,Z')\in \End(\Lambda (T^{*(0,1)}X)\otimes E)_{x_0}$
was defined in Theorem \ref{tue17}. Moreover, $J_{r,\,x_0}(Z,Z')$ 
have the same parity as $r$, $\deg J_{r,\,x_0}\leqslant 3r$, and
\begin{equation} \label{toe2.10}
J_{0,\,x_0}= I_{\C\otimes E}.
\end{equation}
\begin{lemma} \label{toet2.2} For any $k\in \N$, $\varepsilon\in]0, a_X/4[$\,,
 $Z,Z^\prime \in T_{x_0}X$, $\abs{Z},\abs{Z^{\prime}}<2 \var$,
we have
\begin{equation} \label{toe2.9}
p^{-n} P_{p,\,x_0}(Z,Z^\prime)\cong \sum_{r=0}^k 
(J_{r,\,x_0} \cP_{x_0})(\sqrt{p}Z,\sqrt{p}Z^{\prime})p^{-\frac{r}{2}}
+\mO(p^{-\frac{k+1}{2}})\,,
\end{equation}
 in the sense of Notation \ref{noe2.7}.
\end{lemma}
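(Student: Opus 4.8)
The plan is to observe that \eqref{toe2.9} is nothing but Theorem \ref{tue17} rewritten in the normalization of Condition \ref{coe2.7}, so that the task reduces to checking that $\{P_p\}_{p\in\N}$ satisfies Condition \ref{coe2.7}, for every $k$, with $Q_{r,\,x_0}:=J_{r,\,x_0}$ for $0\leqslant r\leqslant k$. Properties (a) and (b) of that Condition I would simply read off from Theorem \ref{tue17}: each $J_{r,\,x_0}(Z,Z^\prime)$ is a polynomial in $Z,Z^\prime$ with coefficients in $\End(\Lambda(T^{*(0,1)}X)\otimes E)_{x_0}$ assembled out of $R^{TX}$, $R^{T^{(1,0)}X}$, $R^E$, $R^L$, their covariant derivatives, and reciprocals of linear combinations of the eigenvalues of $\bJ$ at $x_0$, hence it depends smoothly on $x_0$. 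All the substance is in property (c).

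For (c) I would first unwind the identifications coming from Lemma \ref{0l3.0}: in the chosen trivialization, $P_{p,\,x_0}(Z,Z^\prime)$ is precisely the object written $P_p(Z,Z^\prime)$ in Theorem \ref{tue17}, and by \eqref{a0.7} one has $\cP^{(r)}_{x_0}(\sqrt{p}Z,\sqrt{p}Z^\prime)=(J_{r,\,x_0}\cP_{x_0})(\sqrt{p}Z,\sqrt{p}Z^\prime)$, with $J_{0,\,x_0}=I_{\C\otimes E}$. Then I would multiply the bracketed difference on the left of \eqref{aue66} by $\kappa_{x_0}^{1/2}(Z)\,\kappa_{x_0}^{1/2}(Z^\prime)$: since the $\kappa^{-1/2}$ factors in \eqref{aue66} sit on the model-kernel side, the outcome is exactly $p^{-n}P_{p,\,x_0}(Z,Z^\prime)\kappa_{x_0}^{1/2}(Z)\kappa_{x_0}^{1/2}(Z^\prime)-\sum_{r=0}^k(J_{r,\,x_0}\cP_{x_0})(\sqrt{p}Z,\sqrt{p}Z^\prime)p^{-r/2}$, i.e. the left-hand side of \eqref{toe2.8} with $\Xi_p=P_p$. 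On the right-hand side, because $X$ is compact and $(x_0,Z)\mapsto\kappa_{x_0}(Z)$ is smooth, strictly positive, and equal to $1$ at $Z=0$, the factor $\kappa_{x_0}^{1/2}(Z)\kappa_{x_0}^{1/2}(Z^\prime)$ together with all of its $Z$-, $Z^\prime$- and $x_0$-derivatives is bounded uniformly in $x_0\in X$ and $|Z|,|Z^\prime|<2\varepsilon$; applying the Leibniz rule to the $Z,Z^\prime$-derivatives and to the $\cC^l(X)$-norm in the parameter $x_0$ (exactly in the sense of \eqref{aue66}), the bound $C_{k,l}\,p^{-(k+1-m)/2}(1+\sqrt{p}|Z|+\sqrt{p}|Z^\prime|)^{M}\exp(-\sqrt{C''\mu_0 p}\,|Z-Z^\prime|)+\cO(p^{-\infty})$ is reproduced with a merely larger constant, the polynomial-growth and exponential-decay factors being unaffected. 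Taking $\var^\prime=2\varepsilon$ and $C_0=C''\mu_0$, this is the estimate \eqref{toe2.8}, hence Condition \ref{coe2.7} holds for $\{P_p\}$ with $Q_{r,\,x_0}=J_{r,\,x_0}$, which by Notation \ref{noe2.7} is precisely \eqref{toe2.9}.

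I do not anticipate a real obstacle here, as the assertion is a reformulation; the care needed is purely in the bookkeeping, namely: keeping straight that the $\kappa^{-1/2}$ in \eqref{aue66} turns into a $\kappa^{+1/2}$ multiplying $P_{p,\,x_0}$ in \eqref{toe2.8}; that the rescaled polynomials-times-$\cP_{x_0}$ appearing here coincide with the $\cP^{(r)}_{x_0}(\sqrt{p}\,\cdot\,,\sqrt{p}\,\cdot\,)$ of Theorem \ref{tue17}; and that the enlarged ball $|Z|,|Z^\prime|<2\varepsilon$ still lies well within the injectivity radius, so that the off-diagonal expansion of \cite{DLM04a} underlying Theorem \ref{tue17} is available there verbatim.
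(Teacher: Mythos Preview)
Your proposal is correct and follows essentially the same approach as the paper: both recognize that \eqref{toe2.9} is just Theorem \ref{tue17} (specialized to $m=0$, i.e.\ without $Z,Z'$-derivatives) rewritten in the format of Condition \ref{coe2.7}, after moving the $\kappa^{\pm1/2}$ factors to the other side. The paper's proof is terser---it simply states the resulting inequality \eqref{toe2.11} and declares it to be \eqref{toe2.8}---whereas you spell out the Leibniz-rule bookkeeping for the $\kappa^{1/2}$ multiplication, which is a harmless elaboration.
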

\begin{proof}
Theorem \ref{tue17} shows that
for any $k,m'\in \N$, there exist $M\in \N, C>0$ such that 
\begin{multline}\label{toe2.11}
\Big| p^{-n}P_{p,\,x_0}(Z,Z^{\prime})
\kappa ^{\frac{1}{2}}_{x_0}(Z)\kappa^{\frac{1}{2}}_{x_0}(Z^{\prime })
 - \sum_{r=0}^k  (J_{r,\,x_0} \cP_{x_0})  (\sqrt{p} Z,\sqrt{p} Z^{\prime})
p^{-\frac{r}{2}}\Big|_{\cC ^{m'}(X)}\\
\leqslant C p^{-(k+1)/2}  (1+\sqrt{p}\,|Z|+\sqrt{p}\,|Z^{\prime}|)^M
\exp (- \sqrt{C''\mu_0}\,\sqrt{p}\, |Z-Z^{\prime}|)+\cO(p^{-\infty}),
\end{multline}
for $Z,Z^{\prime}\in T_{x_0}X$, $|Z|, |Z^{\prime }|\leqslant 2 \var$.
Hence \eqref{toe2.11} immediately entails \eqref{toe2.9}.
\end{proof}
\begin{lemma} \label{toet2.3}
Let $f\in\cC^\infty(X,\End(E))$.
There exists a family $\{Q_{r,\,x_0}(f)\}_{r\in\N,\,x_0\in X}$ such that
\begin{itemize}
\item[(a)] $Q_{r,\,x_0}(f)\in\End(\Lambda(T^{*(0,1)}X)\otimes E)_{x_0}[Z,Z^{\prime}]$
are polynomials with the same parity as $r$, 
\item[(b)] $\{Q_{r,\,x_0}(f)\}_{r\in\N,\,x_0\in X}$ is smooth with respect to $x_0\in X$,
\item[(c)] for every $k\in \N$, $\varepsilon\in]0, a_X/4[$\,, $x_0\in X$,
 $Z,Z^\prime \in T_{x_0}X$, $\abs{Z},\abs{Z^{\prime}}<\var/2$ we have 
\begin{equation} \label{toe2.13}
p^{-n}T_{f,\,p,\,x_0}(Z,Z^{\prime})
\cong \sum^k_{r=0}(Q_{r,\,x_0}(f)\cP_{x_0})(\sqrt{p}Z,\sqrt{p}Z^{\prime})
p^{-\frac{r}{2}} + \mO(p^{-\frac{k+1}{2}})\,,
\end{equation}
in the sense of Notation \ref{noe2.7}.
\end{itemize}
$Q_{r,\,x_0}(f)$ are expressed by 
\begin{equation} \label{toe2.14}
Q_{r,\,x_0}(f) = \sum_{r_1+r_2+|\alpha|=r}
  \cK\Big[J_{r_1,\,x_0}\;,\; 
\frac{\partial ^\alpha f_{\,x_0}}{\partial Z^\alpha}(0) 
\frac{Z^\alpha}{\alpha !} J_{r_2,\,x_0}\Big]\,.
\end{equation}
Especially,
\begin{align} \label{toe2.15}
Q_{0,\,x_0}(f)= f(x_0)\,I_{\C\otimes E} .
\end{align}
\end{lemma}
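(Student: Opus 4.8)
The plan is to fix an arbitrary $x_0\in X$, work in the normal coordinates and trivializations of Lemma \ref{0l3.0} centered at $x_0$, plug the off-diagonal expansion of the Bergman kernel (Lemma \ref{toet2.2}) into the integral formula \eqref{toe2.5} for $T_{f,\,p}(\cdot,\cdot)$, and reduce the rescaled convolution to the kernel calculus of Section \ref{toes1}.

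First I would localize. For $|Z|,|Z'|<\var/2$ the points $x,x'$ lie in $B^X(x_0,\var/2)$, and by the rapid off-diagonal decay of $P_p$ (Proposition \ref{0t3.0}) together with the polynomial bound $|P_p|_{\cC^m}\leqslant Cp^M$ from \eqref{aue66}, the contribution to \eqref{toe2.5} of $x''\notin B^X(x_0,\var)$ is $\cO(p^{-\infty})$ in every $\cC^l$-norm in $(x_0,Z,Z')$. So I may restrict the integral to $|Z''|<\var$ and use $dv_X=\kappa_{x_0}\,dv_{TX}$ there. Writing (as in Condition \ref{coe2.7}) $\widetilde{P}_{p,\,x_0}(Z,Z''):=p^{-n}\kappa_{x_0}(Z)^{1/2}\kappa_{x_0}(Z'')^{1/2}P_{p,\,x_0}(Z,Z'')$, and similarly with $(Z'',Z')$, a bookkeeping of the $\kappa_{x_0}$-factors turns \eqref{toe2.5} into
\[
p^{-n}\kappa_{x_0}(Z)^{1/2}\kappa_{x_0}(Z')^{1/2}\,T_{f,\,p,\,x_0}(Z,Z')
= p^{n}\!\int \widetilde{P}_{p,\,x_0}(Z,Z'')\,f_{x_0}(Z'')\,\widetilde{P}_{p,\,x_0}(Z'',Z')\,dv_{TX}(Z'') + \cO(p^{-\infty}),
\]
and by Lemma \ref{toet2.2}, $\widetilde{P}_{p,\,x_0}(Z,Z'')\cong\sum_{r_1}(J_{r_1,\,x_0}\cP)(\sqrt pZ,\sqrt pZ'')p^{-r_1/2}$ (with the error of \eqref{aue66}), and likewise for the second factor.

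Next I would substitute these two expansions together with the Taylor expansion $f_{x_0}(Z'')=\sum_{|\alpha|\leqslant k}\frac{\partial^\alpha f_{x_0}}{\partial Z^\alpha}(0)\frac{(Z'')^\alpha}{\alpha!}+\cO(|Z''|^{k+1})$, and rescale $W=\sqrt pZ$, $W''=\sqrt pZ''$, $W'=\sqrt pZ'$. Using $(Z'')^\alpha=p^{-|\alpha|/2}(W'')^\alpha$ and $dv_{TX}(Z'')=p^{-n}dv_{TX}(W'')$, the generic term becomes $p^{-(r_1+r_2+|\alpha|)/2}$ times the value at $(\sqrt pZ,\sqrt pZ')$ of the composition with respect to $dZ$ (here $dv_{TX}=dZ$) of $J_{r_1,\,x_0}\cP$ with $\frac{\partial^\alpha f_{x_0}}{\partial Z^\alpha}(0)\frac{Z^\alpha}{\alpha!}J_{r_2,\,x_0}\cP$; this is exactly the scaling identity \eqref{toe1.15} of Notation \ref{toet1.4}. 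By Lemma \ref{toet1.1}, extended to $\End(\Lambda(T^{*(0,1)}X)\otimes E)_{x_0}$-valued polynomials by linearity (the operators $b_i$ and the kernel $\cP$ being scalar, the constant matrix coefficients simply ride along in the prescribed left-to-right order), this composition equals $\cK\big[J_{r_1,\,x_0},\frac{\partial^\alpha f_{x_0}}{\partial Z^\alpha}(0)\frac{Z^\alpha}{\alpha!}J_{r_2,\,x_0}\big]\cP$. Collecting the terms with $r_1+r_2+|\alpha|=r$ yields exactly \eqref{toe2.14}. For $r=0$ only $r_1=r_2=|\alpha|=0$ contributes, and since $J_{0,\,x_0}=I_{\C\otimes E}$, the kernel $\cP$ is idempotent (it equals $P^N$ up to the factor $I_{\C\otimes E}$, cf.\ \eqref{abk2.71}), and $f(x_0)\in\End(E)$ commutes with $I_{\C\otimes E}$, we get $Q_{0,\,x_0}(f)=f(x_0)I_{\C\otimes E}$, which is \eqref{toe2.15}. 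Part (a) follows because $J_{r_i,\,x_0}$ has the parity of $r_i$, $Z^\alpha$ that of $|\alpha|$, and $\cK$ preserves parity (last part of Lemma \ref{toet1.1}); part (b) is inherited from the smoothness in $x_0$ of $J_{r,\,x_0}$, of $f$, and of the eigenvalues $a_j(x_0)$ of $\bJ_{x_0}$.

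The main obstacle is upgrading this formal computation to a genuine estimate in the sense of Condition \ref{coe2.7}(c), uniformly in $x_0$ and in all $\cC^l$-norms. Three error sources must be controlled: (i) the truncation remainder of Lemma \ref{toet2.2} for each $\widetilde{P}_{p,\,x_0}$-factor, carrying the weight $(1+\sqrt p|Z|+\sqrt p|Z''|)^M\exp(-c\sqrt p\,|Z-Z''|)$; (ii) the Taylor remainder $\cO(|Z''|^{k+1})$ of $f$, which after rescaling becomes $p^{-(k+1)/2}|W''|^{k+1}$ and is absorbed by the polynomial growth allowed in \eqref{toe2.8}; and (iii) extending the $Z''$-integration from $|Z''|<\var$ back to $\R^{2n}$, the tail being $\cO(p^{-\infty})$ thanks to the Gaussian decay. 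The crux is the composition estimate: the product of two factors decaying like $\exp(-c\sqrt p|Z-Z''|)$ and $\exp(-c\sqrt p|Z''-Z'|)$, times polynomial weights, integrated in $W''=\sqrt pZ''$ over $\R^{2n}$, is bounded by $C(1+\sqrt p|Z|+\sqrt p|Z'|)^{M'}\exp(-\tfrac c2\sqrt p|Z-Z'|)$. This follows by splitting one copy of the exponentials via the triangle inequality $|Z-Z'|\leqslant|Z-Z''|+|Z''-Z'|$ to gain the decay in $|Z-Z'|$, and keeping the remaining copy to dominate the polynomial weights and ensure integrability in $W''$; differentiating the estimates in $Z,Z'$ and in $x_0$ to reach the full $\cC^l$-statement is then routine.
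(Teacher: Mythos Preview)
Your proposal is correct and follows essentially the same approach as the paper: localize via the off-diagonal decay of $P_p$, write $T_{f,\,p,\,x_0}(Z,Z')$ as the convolution \eqref{toe2.17} (the paper inserts the explicit cutoff $\rho(2|Z''|/\var)$ where you simply restrict to $|Z''|<\var$), substitute the Bergman kernel expansion \eqref{toe2.11} and the Taylor expansion of $f_{x_0}$, rescale $\sqrt{p}\,Z''=W$, and identify the resulting terms via \eqref{toe1.6} and \eqref{toe1.15}. Your write-up is in fact more explicit than the paper's on the error analysis (the composition estimate for the Gaussian weights, the handling of the Taylor remainder, and the parity/smoothness arguments for (a)--(b)), all of which the paper leaves implicit.
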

\noindent
We have used here the notations \eqref{toe1.6} and \eqref{2c3}.
\begin{proof} {}From \eqref{toe2.5} and \eqref{toe2.6}, 
we know that for 
$\abs{Z},\abs{Z^{\prime}}<\var/2$, $T_{f,\,p,\,x_0}(Z,Z^\prime)$ 
is determined up to terms of order $\cO(p^{-\infty})$
 by the behavior of $f$ in $B^X(x_0, \varepsilon)$.
Let $\rho: \R\to [0,1]$ be a smooth even function such that
\begin{equation}\label{alm4.19}
\rho (v)=1  \  \  {\rm if} \  \  |v|<2;
\quad \rho (v)=0 \   \   {\rm if} \  |v|>4.
\end{equation}
For $\abs{Z},\abs{Z^{\prime}}<\var/2$, we get 
\begin{multline}\label{toe2.17}
T_{f,\,p,\,x_0}(Z,Z^{\prime})=\int_{{ T_{x_0}X}}
P_{p,x_0}(Z,Z^{\prime\prime})
\rho(2|Z^{\prime\prime}|/\var)
f_{x_0}(Z^{\prime\prime})P_{p,x_0}(Z^{\prime\prime},Z^{\prime}) \\
\times \kappa_{x_0}(Z^{\prime\prime})
\,dv_{TX}(Z^{\prime\prime})+ \cO(p^{-\infty}).
\end{multline}  
We consider the Taylor expansion of $f_{x_0}$:
\begin{multline}\label{toe2.18}
f_{x_0}(Z)=\sum_{|\alpha|\leqslant k}
\frac{\partial^\alpha f_{x_0}}{\partial Z^\alpha}(0)
\frac{ Z^\alpha}{\alpha!}+\cO(|Z|^{k+1})\\
=\sum_{|\alpha|\leqslant k} p^{-|\alpha|/2}
\frac{\partial^\alpha f_{x_0}}{\partial Z^\alpha}(0)
\frac{ (\sqrt{p}Z)^\alpha}{\alpha!}
+p^{-\frac{k+1}{2}}\cO(|\sqrt{p}Z|^{k+1}).
\end{multline}
We multiply now the expansions given in \eqref{toe2.18} and \eqref{toe2.11} 
and obtain the expansion of 
\begin{equation*}
\kappa_{x_0}^{1/2}(Z)P_{p,\,x_0}(Z,Z^{\prime\prime})
(\kappa_{x_0}f_{x_0})(Z^{\prime\prime})
P_{p,\,x_0}(Z^{\prime\prime},Z^{\prime})\kappa_{x_0}^{1/2}(Z^{\prime})
\end{equation*}
which we substitute in \eqref{toe2.17}. We integrate then on 
$T_{x_0}X$ by using the change of variable $\sqrt{p}\,Z^{\prime\prime}=W$
and conclude \eqref{toe2.13} and \eqref{toe2.14} 
by using formulas \eqref{toe1.6} and \eqref{toe1.15}.

\noindent
{}From \eqref{toe2.10} and \eqref{toe2.14}, we get 
\begin{align} \label{toe2.19}
Q_{0,\,x_0}(f)= \cK[1, f_{x_0}(0)]I_{\C\otimes E} = f_{x_0}(0)I_{\C\otimes E} = f(x_0)I_{\C\otimes E}.
\end{align}
The proof of Lemma \ref{toet2.3} is complete.
\end{proof}
As an example, we compute $Q_{1,\,x_0}(f)$.
\begin{lemma}\label{toet2.5} 
$Q_{1,\,x_0}(f)$ appearing in \eqref{toe2.13} is given by
\begin{equation} \label{toe2.20}
Q_{1,\,x_0}(f)= f(x_0) J_{1,\,x_0}  + \cK\Big[J_{0,\,x_0}, 
\frac{\partial f_{x_0}}{\partial Z_j}(0) Z_j J_{0,\,x_0}\Big].
\end{equation}
\end{lemma}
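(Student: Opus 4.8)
The plan is to specialize the general formula \eqref{toe2.14} for $Q_{r,\,x_0}(f)$ to the case $r=1$. Setting $r=1$ in \eqref{toe2.14}, the sum runs over all triples $(r_1,r_2,\alpha)$ of nonnegative integers with $r_1+r_2+|\alpha|=1$, so there are exactly three contributing terms: $(r_1,r_2,\alpha)=(1,0,0)$, $(0,1,0)$, and $(0,0,\alpha)$ with $|\alpha|=1$. First I would write these out explicitly, using $J_{0,\,x_0}=I_{\C\otimes E}$ from \eqref{toe2.10}:
\begin{equation*}
Q_{1,\,x_0}(f)=\cK\Big[J_{1,\,x_0},f(x_0)I_{\C\otimes E}\Big]
+\cK\Big[I_{\C\otimes E},f(x_0)J_{1,\,x_0}\Big]
+\sum_{|\alpha|=1}\cK\Big[I_{\C\otimes E},\frac{\partial^\alpha f_{x_0}}{\partial Z^\alpha}(0)\,Z^\alpha\,I_{\C\otimes E}\Big].
\end{equation*}

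Next I would simplify the first two terms. Here $f(x_0)$ is a scalar endomorphism (acting on the $E$-factor only, commuting with the $\Lambda(T^{*(0,1)}X)$-factor), so by the bilinearity of $\cK[\cdot,\cdot]$ in \eqref{toe1.6} and the fact that $\cK[\cdot,\cdot]$ computes the composition $(F\cP)\circ(G\cP)$, one has $\cK[J_{1,\,x_0},f(x_0)I_{\C\otimes E}]=\cK[I_{\C\otimes E},f(x_0)J_{1,\,x_0}]$; more simply, both reduce (using \eqref{toe1.4}, i.e.\ $\cP\circ(J_{1,\,x_0}\cP)$ and $(J_{1,\,x_0}\cP)\circ\cP$) to $\tfrac12 f(x_0)J_{1,\,x_0}$ each, so their sum is $f(x_0)J_{1,\,x_0}$. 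Here I need to invoke the structure of $J_{1,\,x_0}$: it has odd parity and $\deg J_{1,\,x_0}\leqslant 3$, but the key point is simply that $\cP$ acts as the identity on $\ke(\cL)$, which gives $\cK[1,J_{1,\,x_0}]=J_{1,\,x_0}$ as polynomials, and $\cK[J_{1,\,x_0},1]$ likewise after noting (via the adjoint argument in the proof of Lemma \ref{toet1.1}, or directly) that composing with $\cP$ on the right also returns $J_{1,\,x_0}\cP$ up to the $b^\alpha$-terms which project to zero. The third term is just $\cK\big[I_{\C\otimes E},\frac{\partial f_{x_0}}{\partial Z_j}(0)\,Z_j\,I_{\C\otimes E}\big]=\cK\big[J_{0,\,x_0},\frac{\partial f_{x_0}}{\partial Z_j}(0)\,Z_j\,J_{0,\,x_0}\big]$ in the summation convention, which is exactly the second term of \eqref{toe2.20}.

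Assembling these pieces yields $Q_{1,\,x_0}(f)=f(x_0)J_{1,\,x_0}+\cK\big[J_{0,\,x_0},\frac{\partial f_{x_0}}{\partial Z_j}(0)Z_j J_{0,\,x_0}\big]$, which is precisely \eqref{toe2.20}. The main obstacle — really the only point requiring care rather than routine bookkeeping — is justifying that the two terms with $(r_1,r_2)=(1,0)$ and $(0,1)$ combine to give $f(x_0)J_{1,\,x_0}$ with coefficient exactly $1$: one must check that $\cK[J_{1,\,x_0},I_{\C\otimes E}]=\cK[I_{\C\otimes E},J_{1,\,x_0}]=J_{1,\,x_0}$ as polynomials, which follows because $\cP$ is the orthogonal projection onto $\ke(\cL)$ and hence acts as the identity there (cf.\ \eqref{toe1.4}), together with the commutation of the scalar $f(x_0)$ with everything in the $\Lambda(T^{*(0,1)}X)$-factor so that $f(x_0)$ can be factored out of $\cK$ by bilinearity. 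Everything else is direct substitution into \eqref{toe2.14}.
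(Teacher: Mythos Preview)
Your decomposition into the three terms from \eqref{toe2.14} is correct, but the argument you give for combining the first two is flawed. The claim that $\cK[1,J_{1,\,x_0}]=J_{1,\,x_0}$ (``since $\cP$ acts as the identity on $\ke(\cL)$'') is false in general: $\cP\circ(J_1\cP)=J_1\cP$ would require the image of the operator $J_1\cP$ to lie in $\ke(\cL)$, which fails as soon as $J_1$ depends nontrivially on $Z$. For instance, \eqref{toe1.12} gives $\cK[1,\ov{z}_j]=\ov{z}'_j\neq\ov{z}_j$. Similarly $(J_1\cP)\circ\cP=J_1\cP$ fails when $J_1$ depends on $Z'$; e.g.\ $\cK[z'_j,1]=z_j\neq z'_j$. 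Your alternative claim that each term equals $\tfrac12 f(x_0)J_{1,\,x_0}$ has no justification at all. A second issue: $f(x_0)\in\End(E_{x_0})$ is \emph{not} a scalar (the bundle $E$ has arbitrary rank), so you cannot simply ``factor it out by bilinearity''---commuting $f(x_0)$ past $J_{1,\,x_0}$ requires knowing something about $J_{1,\,x_0}$.

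The paper's proof avoids both pitfalls. First, it takes $f=1$ in \eqref{toe2.14}: since $T_{1,\,p}=P_p$, one has $Q_{1,\,x_0}(1)=J_{1,\,x_0}$, which yields the identity $\cK[J_{0,\,x_0},J_{1,\,x_0}]+\cK[J_{1,\,x_0},J_{0,\,x_0}]=J_{1,\,x_0}$ for the \emph{sum}, without ever evaluating the two summands separately. Second, to pull $f(x_0)$ out of $\cK[J_{1,\,x_0},f(x_0)J_{0,\,x_0}]$, one uses that $\boldsymbol{\mO}_1$ (and hence $J_{1,\,x_0}$, via \eqref{abk2.76}) acts as the identity on the $E$-factor; this is exactly what makes $J_{1,\,x_0}$ commute with $f(x_0)$. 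Combining these gives $f(x_0)\big(\cK[J_{0,\,x_0},J_{1,\,x_0}]+\cK[J_{1,\,x_0},J_{0,\,x_0}]\big)=f(x_0)J_{1,\,x_0}$. Your handling of the third ($|\alpha|=1$) term is fine.
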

\begin{proof} At first, by taking $f=1$ in \eqref{toe2.14}, we get 
\begin{equation} \label{toe2.21}
J_{1,\,x_0}= \cK[J_{0,\,x_0}, J_{1,\,x_0}]+ \cK[J_{1,\,x_0}, J_{0,\,x_0}].
\end{equation}
The operator $\boldsymbol{\mO}_1$ defined in \eqref{1c31} 
(considered as a differential operator with coefficients 
in $\End(\Lambda(T^{*(1,0)}X)\otimes E)_{x_0}$) acts as the identity on 
the $E$-component.
Thus from \eqref{abk2.76} and \eqref{toe1.6}, we obtain
\begin{equation} \label{toe2.22}
 \cK[J_{1,\,x_0}, f(x_0)J_{0,\,x_0}]= f(x_0) \cK[J_{1,\,x_0}, J_{0,\,x_0}].
\end{equation}
{}From \eqref{toe2.14}, \eqref{toe2.21} and \eqref{toe2.22}, 
we get \eqref{toe2.20}.
\end{proof}

\begin{rem}\label{toet2.6} If $f\in \cC^\infty(X)$ 
we get \eqref{toe2.22}, thus also \eqref{toe2.20},
without using the precise formulas of $\boldsymbol{\mO}_1$ or $J_1$.
\end{rem}

\subsection{A criterion for Toeplitz operators}\label{toes3}
We will prove next a useful criterion which ensures that
a given family is a Toeplitz operator. 
\begin{thm}\label{toet3.1}
Let $\{T_p:L^2(X,E_p)\longrightarrow L^2(X,E_p)\}$ 
be a family of bounded linear operators which satisfies 
the following three conditions:
\begin{itemize}
\item[(i)] For any $p\in \N$,  $P_p\,T_p\,P_p=T_p$\,.
\item[(ii)] For any $\varepsilon_0>0$ and any $l\in\N$, 
there exists $C_{l,\varepsilon_0}>0$ such that 
for all $p\geqslant 1$ and all $(x,x')\in X\times X$ 
with $d(x,x')>\varepsilon_0$,
\begin{equation} \label{toe3.1}
|T_{p}(x,x')|\leqslant C_{l,\varepsilon_0}p^{-l}.
\end{equation}
\item[(iii)] There exists a family of polynomials 
$\{\mQ_{r,\,x_0}\in\End(\Lambda(T^{*(0,1)}X)\otimes E)_{x_0}
[Z,Z^{\prime}]\}_{x_0\in X}$ 
such that: 
\begin{itemize}
\item[(a)] each $\mQ_{r,\,x_0}$ has the same parity as $r$, 
\item[(b)] the family is smooth in $x_0\in X$ and 
\item[(c)] there exists $0<\var^\prime<a_X/4$ such that for every  $x_0\in X$,
 every $Z,Z^\prime \in T_{x_0}X$ with  $\abs{Z},\abs{Z^{\prime}}<\var^\prime$ 
and every $k\in\N$ we have 
\begin{equation} \label{toe3.2}
p^{-n}T_{p,\,x_0}(Z,Z^{\prime})\cong 
\sum^k_{r=0}(\mQ_{r,\,x_0}\cP_{x_0})
(\sqrt{p}Z,\sqrt{p}Z^{\prime})p^{-\frac{r}{2}} + \mO(p^{-\frac{k+1}{2}}),
\end{equation}
in the sense of \eqref{toe2.7} and \eqref{toe2.8}.
\end{itemize}
\end{itemize}
Then $\{T_p\}$ is a Toeplitz operator. 
\end{thm}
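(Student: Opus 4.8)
The plan is to identify the principal symbol $g_0$ of $\{T_p\}$, peel it off by subtracting $T_{g_0,\,p}$, and iterate: at each stage one subtracts a Toeplitz operator $p^{-l}T_{g_l,\,p}$ so that the remainder satisfies the hypotheses of the theorem with the expansion in \eqref{toe3.2} starting one power of $p^{-1/2}$ higher. The key bookkeeping device is the parity assumption (iii)(a): since $\mQ_{r,\,x_0}$ has the same parity as $r$, and the same is true of the $J_{r,\,x_0}$ from Theorem \ref{tue17} and the $Q_{r,\,x_0}(f)$ from Lemma \ref{toet2.3}, only integer powers $p^{-l}$ will survive in the final expansion \eqref{atoe2.2}; the half-integer terms must vanish identically. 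So the induction is really over $r=0,2,4,\dots$, producing $g_{r/2}\in\cC^\infty(X,\End(E))$, while the odd steps $r=1,3,\dots$ produce the constraint that the corresponding $\mQ_{r,\,x_0}$ are already accounted for.

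First I would pin down $g_0$. Evaluating \eqref{toe3.2} on the diagonal $Z=Z'=0$ and comparing with Lemma \ref{toet2.3} (in particular \eqref{toe2.15}, $Q_{0,\,x_0}(f)=f(x_0)I_{\C\otimes E}$), the leading term forces $\mQ_{0,\,x_0}=g_0(x_0)I_{\C\otimes E}$ for a well-defined smooth section $g_0\in\cC^\infty(X,\End(E))$; smoothness in $x_0$ comes from (iii)(b). Condition (i) guarantees $\mQ_{0,\,x_0}$ lies in the range of the model Bergman projection in both variables, which is what makes the identification with a genuine section of $\End(E)$ (rather than of $\End(\bE)$) legitimate — here one uses \eqref{toe1.4} and the fact that $P^N(Z,Z')=\cP(Z,Z')I_{\C\otimes E}$ from \eqref{abk2.71}. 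Now set $T_p^{(1)}:=T_p-T_{g_0,\,p}$. By Lemma \ref{toet2.1} and hypothesis (ii), $T_p^{(1)}$ still satisfies (i) and (ii); and by Lemma \ref{toet2.3} together with \eqref{toe3.2}, $p^{-n}T^{(1)}_{p,\,x_0}(Z,Z')$ has an expansion of the same type with new polynomials $\mQ^{(1)}_{r,\,x_0}=\mQ_{r,\,x_0}-Q_{r,\,x_0}(g_0)$, and by construction $\mQ^{(1)}_{0,\,x_0}=0$. The parity of $\mQ^{(1)}_{r,\,x_0}$ is still that of $r$.

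The crux is the inductive step: given that $\mQ^{(j)}_{r,\,x_0}=0$ for $r<r_0$, one must show that when $r_0$ is even, $\mQ^{(j)}_{r_0,\,x_0}=g(x_0)I_{\C\otimes E}$ for some smooth section $g$ (so one may subtract $p^{-r_0/2}T_{g,\,p}$), and when $r_0$ is odd, $\mQ^{(j)}_{r_0,\,x_0}=0$ outright. Both follow from condition (i), which at the level of the model says $(F\cP)\circ\cP=F\cP$ and $\cP\circ(F\cP)=F\cP$ in the appropriate sense: expanding $P_p T_p P_p=T_p$ using Lemma \ref{toet2.2} and \eqref{toe3.2}, matching the coefficient of $p^{-r_0/2}$, and invoking \eqref{toe1.4}--\eqref{toe1.5a}, one finds that $\mQ^{(j)}_{r_0,\,x_0}$ must be of the form $(\text{polynomial in } z,\ov z')\cdot\cP$ with the polynomial being annihilated by the model Bergman projection unless it is constant; the parity of $\mQ^{(j)}_{r_0,\,x_0}$ then forces the constant to vanish when $r_0$ is odd, and when $r_0$ is even it forces the constant to lie in $\C\otimes E$, i.e. to be $g(x_0)I_{\C\otimes E}$. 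I expect this step — extracting from the algebraic identity $P_pT_pP_p=T_p$, via the kernel calculus of Section \ref{toes1}, the precise conclusion that the leading surviving polynomial is a constant multiple of $I_{\C\otimes E}$ — to be the main obstacle; it is where the self-adjointness of $\cP$, formulas \eqref{toe1.5}--\eqref{toe1.6}, and the parity constraint all have to be combined carefully. Finally, to pass from the formal iteration to the operator-norm estimate \eqref{toe2.3}, I would estimate the kernel of the remainder $R_{p,k}:=T_p-P_p\big(\sum_{l=0}^k p^{-l}g_l\big)P_p$ using \eqref{toe2.8} (which gives a pointwise bound with Gaussian decay off the diagonal and polynomial growth along it), integrate to control the $L^2\to L^2$ norm by a Schur-test argument exactly as in the proof of Lemma \ref{toet2.1}, and conclude $\norm{R_{p,k}}\leqslant C_k p^{-k-1}$, which is precisely \eqref{toe2.3}. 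Hence $\{T_p\}$ is a Toeplitz operator in the sense of Definition \ref{toe-def}.
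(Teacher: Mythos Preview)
Your overall strategy---define $g_0$, subtract $T_{g_0,p}$, iterate---is the same as the paper's, and your Schur-test plan for turning the kernel estimate into the operator-norm estimate \eqref{toe2.3} is correct. But there is a genuine gap at the step you yourself flag as ``the main obstacle'': your argument that the leading surviving polynomial $\mQ^{(j)}_{r_0,\,x_0}$ must be a constant (times $I_{\C\otimes E}$) does not go through.

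Concretely, you claim that expanding $P_pT_pP_p=T_p$ via the model calculus forces $\mQ^{(j)}_{r_0,\,x_0}$ to be constant. What the identity $P_pT_pP_p=T_p$ actually gives at leading order (this is Lemma \ref{toet3.3}) is only that $\mQ_{0,\,x_0}$ is a polynomial in $z$ and $\overline{z}^{\,\prime}$ with values in $\End(E_{x_0})\circ I_{\C\otimes E}$. But \emph{every} such polynomial $F(z,\overline{z}^{\,\prime})$ satisfies $\cP\circ(F\cP)\circ\cP=F\cP$; indeed from \eqref{toe1.12} one has $\cK[1,z_j]=z_j$ and $\cK[1,\overline{z}_j]=\overline{z}^{\,\prime}_j$, so holomorphic monomials in $z$ and antiholomorphic monomials in $z'$ pass through $\cP$ untouched. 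Thus the projection identity imposes \emph{no} further constraint beyond ``polynomial in $z,\overline{z}^{\,\prime}$''. Likewise, parity alone does not kill the odd-$r_0$ term: an odd polynomial in $z,\overline{z}^{\,\prime}$ (say $z_1$) is perfectly compatible with everything you have written, yet is nonzero.

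The paper closes this gap with a substantial argument (Proposition \ref{toet3.2} via Lemmas \ref{toet3.3}--\ref{toet3.7}, and then Lemma \ref{toet3.9} for the odd step). The ingredients you are missing are: (a) the reduction to self-adjoint $T_p$ via \eqref{toe3.3}, which allows one to relate $F^{(i)}_x(z,\overline{z}')$ to $(F^{(i)}_x(z',\overline{z}))^*$; (b) the introduction of the auxiliary operators $F^{(i)}P_p$ and $\cT_p$ in \eqref{b6.33}, together with the Schur-type bound $\|\cT_p\|=\mO(p^{-1/2})$ (Lemma \ref{toet3.4}), which---applied to the \emph{adjoint} $\cT_p^*$---forces the vanishing of certain Taylor coefficients and yields $F^{(i)}_x(0,\overline{z}')=F^{(i)}_x(z,0)=0$ for $i>0$ (Lemmas \ref{toet3.5}, \ref{toet3.6}); and (c) the degree-lowering trick of Lemma \ref{toet3.7}, where one replaces $T_p$ by $\tfrac{1}{\sqrt{p}}P_p(\nabla^{E_p}_X T_p)P_p$ to access $\partial_{z_j}F_{x}$ and iterate until all nonconstant parts of $F_x$ are shown to vanish. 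Only after this does one know $\mQ_{0,\,x_0}\equiv g_0(x_0)I_{\C\otimes E}$, and an analogous argument (Lemma \ref{toet3.9}) is needed again to dispose of the $p^{-1/2}$ term. This machinery is the real content of the proof; without it the induction cannot start.
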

\begin{rem} \label{toet3.0} By Lemmas \ref{toet2.1} and \ref{toet2.3}, 
and by \eqref{toe2.2}, \eqref{toe2.3} and the Sobolev inequality
(cf. \cite[(4.14)]{DLM04a}), 
it follows that every Toeplitz operator in the sense
 of Definition \ref{toe-def} verifies the conditions (i), (ii), (iii) 
of Theorem \ref{toet3.1}.
\end{rem}

We start the proof of Theorem \ref{toet3.1}. 
 Let $T_p^*$ be the adjoint of $T_p$. By writing 
\begin{equation}\label{toe3.3}
T_p=\frac{1}{2}(T_p+T_p^*)+\sqrt{-1}\frac{1}{2\sqrt{-1}}(T_p-T_p^*),
\end{equation} 
we may and will assume from now on that $T_p$ is self-adjoint.

We will define inductively the sequence
$(g_l)_{l\geqslant0}$, $g_l\in \cC^\infty(X, \End(E))$ such that 
\begin{equation}\label{toe3.4}
T_{p} = \sum_{l=0}^m P_{p}\, g_l \, p^{-l}\, P_{p} +\mO(p^{-m-1})\,,\quad\text{for every $m\geqslant 0$}\,.
\end{equation} 
Moreover, we can make these $g_l$'s to be self-adjoint.

Let us start with the case $m=0$ of \eqref{toe3.4}. For an arbitrary but fixed $x_0\in X$, we set 
\begin{equation}\label{toe3.5}
g_0(x_0)=\mQ_{0,\,x_0}(0,0)|_{\,\C\otimes E}\,\in\End(E_{x_0})\,.
\end{equation}
We will show that
\begin{equation}\label{toe3.60}
p^{-n} (T_{p} -  T_{g_0,\,p})_{x_0}(Z,Z^\prime)\cong \mO(p^{-1})\,,
\end{equation} 
which implies the case $m=0$ of \eqref{toe3.4}, namely,
\begin{equation}\label{toe3.6}
T_{p}=P_p\,g_0\,P_p + \mO(p^{-1}).
\end{equation} 
The proof of \eqref{toe3.60}-\eqref{toe3.6} will be done in Propositions \ref{toet3.2} and \ref{toet3.8}.
\begin{prop}\label{toet3.2}
In the conditions of Theorem \ref{toet3.1} we have 
$\mQ_{0,\,x_0}(Z,Z^{\prime})=\mQ_{0,\,x_0}(0,0)\in 
\End(E_{x_0})\circ I_{\C\otimes E}$
for all $x_0\in X$ and all $Z,Z^{\prime}\in T_{x_0}X$. 
\end{prop}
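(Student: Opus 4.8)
The plan is to exploit the structure of the operator identity $P_p T_p P_p = T_p$ at the level of the model kernels on $T_{x_0}X$, combined with the leading-order asymptotics of $P_p$ from Lemma \ref{toet2.2}. First I would pass to the rescaled kernels: since $T_p = P_p T_p P_p$, the kernel $T_{p,\,x_0}(Z,Z')$ can be written (up to $\cO(p^{-\infty})$ error, using \eqref{toe3.1} to localize the intermediate integrations near $x_0$ as in the proof of Lemma \ref{toet2.3}) as a double convolution of $P_{p,\,x_0}$ with $T_{p,\,x_0}$ and again with $P_{p,\,x_0}$, all cut off by the function $\rho$ of \eqref{alm4.19}. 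Then I would substitute the expansions \eqref{toe2.9} for the two outer factors $P_{p,\,x_0}$ and the hypothesized expansion \eqref{toe3.2} for the middle factor $T_{p,\,x_0}$, rescale by $\sqrt{p}\,Z'' = W$, and collect terms by powers of $p^{-1/2}$ using the kernel calculus of Section \ref{toes1}, in particular \eqref{toe1.6}.

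The key step is then to read off the leading coefficient. At order $p^{0}$ (i.e. $r=0$) only $J_{0,\,x_0} = I_{\C\otimes E}$ contributes from the two outer $P$-factors, so the leading term of the rescaled kernel of $P_p T_p P_p$ is $\cK[I_{\C\otimes E},\ \cK[\mQ_{0,\,x_0},\ I_{\C\otimes E}]]\,\cP_{x_0}$, which by the composition rule \eqref{toe1.6} together with \eqref{toe1.4}–\eqref{toe1.5a} equals $(\cP_{x_0}\circ(\mQ_{0,\,x_0}\cP_{x_0})\circ\cP_{x_0})$ expressed via a single polynomial times $\cP_{x_0}$. Comparing with the order-$p^0$ term $\mQ_{0,\,x_0}\cP_{x_0}$ of \eqref{toe3.2} for $T_p$ itself forces
\begin{equation*}
\mQ_{0,\,x_0}(Z,Z')\,\cP_{x_0}(Z,Z') = \big(\cP_{x_0}\circ(\mQ_{0,\,x_0}\cP_{x_0})\circ\cP_{x_0}\big)(Z,Z')\,.
\end{equation*}
Now I would invoke the explicit effect of $\cP$ recorded in \eqref{toe1.4}: writing $\mQ_{0,\,x_0}$ in the normal form \eqref{toe1.5}, i.e. $\mQ_{0,\,x_0}\cP_{x_0} = \sum_\alpha b_z^\alpha(F_\alpha \cP_{x_0})$, the composition $\cP\circ(\mathord{\,\cdot\,})$ kills every term with $|\alpha|>0$ and leaves only the $\alpha = 0$ part; similarly composing on the right with $\cP$ (via the adjoint argument used for \eqref{toe1.5a}) forces $F_0$ to be independent of $Z$ and to reduce to a polynomial in $\overline z'$ that must in fact be constant. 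Tracking parities, $\mQ_{0,\,x_0}$ has the same parity as $r = 0$, hence even degree, and the surviving constant is precisely $\mQ_{0,\,x_0}(0,0)$, acting as $\mQ_{0,\,x_0}(0,0)\circ I_{\C\otimes E}$ after applying the projection $I_{\C\otimes E}$ of \eqref{2c3}.

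The main obstacle I anticipate is bookkeeping rather than conceptual: one must be careful that the Gaussian decay and polynomial growth factors in Condition \ref{coe2.7}(c) are preserved under the two convolutions (so that the asymptotic identities may legitimately be compared coefficient by coefficient), and one must handle the $\End(\Lambda(T^{*(0,1)}X)\otimes E)$-valued — hence noncommutative — nature of the coefficients, so that the order in $\cK[\,\cdot\,,\cdot\,]$ is respected throughout. Once the reduction to the scalar kernel-calculus identity above is in place, the conclusion that $\mQ_{0,\,x_0}$ is the constant $\mQ_{0,\,x_0}(0,0)$ lying in $\End(E_{x_0})\circ I_{\C\otimes E}$ follows directly from \eqref{toe1.4} and the self-adjointness reduction \eqref{toe3.3}.
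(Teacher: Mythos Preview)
Your first step is correct and matches the paper's Lemma~\ref{toet3.3}: from $T_p = P_p T_p P_p$ and the leading asymptotics one obtains
\[
\mQ_{0,\,x_0}\cP_{x_0} \;=\; I_{\C\otimes E}\,\cP_{x_0}\circ(\mQ_{0,\,x_0}\cP_{x_0})\circ\cP_{x_0}\,I_{\C\otimes E},
\]
and the kernel calculus of Section~\ref{toes1} then forces $\mQ_{0,\,x_0}\in\End(E_{x_0})\circ I_{\C\otimes E}[z,\bar z']$.

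However, your next claim --- that composing with $\cP$ on both sides forces $\mQ_{0,\,x_0}$ to be \emph{constant} --- is false. Any polynomial $F\in\C[z,\bar z']$ whatsoever satisfies $\cP\circ(F\cP)\circ\cP = F\cP$. For instance, $F(Z,Z')=z_i$ gives the operator $M_{z_i}\cP$, and since multiplication by $z_i$ preserves $\ker\cL$ one has $\cP M_{z_i}\cP = M_{z_i}\cP$; the case $F=\bar z'_j$ follows by taking adjoints. Thus the projection identity at the model level exhausts its content once you know $\mQ_{0,\,x_0}\in\C[z,\bar z']$, and cannot by itself rule out higher-degree terms. Your parity remark only gives even total degree, not degree zero.

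The paper's argument for constancy is global and substantially more involved (Lemmas~\ref{toet3.4}--\ref{toet3.7}). Writing $F_x:=\mQ_{0,\,x}|_{\C\otimes E}=\sum_i F^{(i)}_x$ in homogeneous pieces, one builds auxiliary operators $F^{(i)}P_p$ with kernels $\eta(d(x,y))\,F^{(i)}_x(0,\bar z')\,P_p(x,y)$ and sets $\cT_p=T_p-\sum_i (F^{(i)}P_p)\,p^{i/2}$. A direct $L^2$ estimate from the expansion gives $\norm{\cT_p}=O(p^{-1/2})$, hence $\norm{\cT_p^*}=O(p^{-1/2})$. But $\cT_p^*$ involves the transposed kernels $\widetilde F^{(i)}(x,y)=(F^{(i)}(y,x))^*$, which are \emph{not} polynomial in normal coordinates around $x$; Taylor-expanding them and matching against the Bergman expansion, the $O(p^{-1/2})$ bound forces a cascade of vanishing conditions on the derivatives of $\widetilde F^{(i)}$, yielding first $F^{(i)}_x(0,\bar z')=0$ and (via self-adjointness) $F^{(i)}_x(z,0)=0$ for $i>0$. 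One then iterates, replacing $T_p$ by $p^{-1/2}P_p(\nabla^{E_p}_X T_p)P_p$ to peel off one $z$-derivative at a time, until all $F^{(i)}_x$ with $i>0$ vanish identically. This interplay between the local model at $x_0$ and the global smooth dependence on $x_0$ is essential; it is not accessible from the kernel identity on a single tangent space $T_{x_0}X$.
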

\begin{proof}
The proof is divided in the series of Lemmas \ref{toet3.3}, \ref{toet3.4}, \ref{toet3.5}, \ref{toet3.6} and \ref{toet3.7}.
Our first observation is as follows.
\begin{lemma}\label{toet3.3}
$\mQ_{0,\,x_0}\in\End(E_{x_0}) \circ I_{\C\otimes E} [Z,Z^{\prime}]$,
and $\mQ_{0,\,x_0}$ is a polynomial in $z,\ov{z}^\prime$.
\end{lemma}
\begin{proof}

Indeed, by \eqref{toe3.2}
\begin{equation} \label{toe3.10}
p^{-n}T_{p,\,x_0}(Z,Z^{\prime})
\cong (\mQ_{0,\,x_0}\cP_{x_0})(\sqrt{p}Z,\sqrt{p}Z^{\prime})
+ \mO(p^{-1/2}).
\end{equation}

Moreover, by \eqref{toe2.10} and \eqref{toe2.9}, we have
\begin{multline} \label{bsy1.19}
p^{-n}(P_p\,T_{p}\,P_p)_{x_0}(Z,Z^{\prime})
\cong ((\cP J_0)  \circ(\mQ_{0}\cP)\circ(\cP J_0))_{x_0}
(\sqrt{p}Z,\sqrt{p}Z^{\prime}) + \mO(p^{-1/2}).
\end{multline}
Since $P_p\,T_{p}\,P_p=T_p$,
we deduce from \eqref{toe3.10}, \eqref{toe2.10} and \eqref{bsy1.19} that 
\begin{equation} \label{bsy1.20}
\mQ_{0,\,x_0}\cP_{x_0}=I_{\C\otimes E} \cP_{x_0}\circ(\mQ_{0,\,x_0}
\cP_{x_0})\circ\cP_{x_0}I_{\C\otimes E},
\end{equation}
hence $\mQ_{0,\,x_0}\in\End(E_{x_0}) \circ I_{\C\otimes E}  
[z,\ov{z}^{\,\prime}]$ 
by \eqref{toe1.5} and \eqref{toe1.8}.
\end{proof}

\noindent
For simplicity we denote in the rest of the proof 
$F_{x}=\mQ_{0,\,x}|_{\C\otimes E}\in \End(E_x)$. 
Let $F_x=\sum_{i\geqslant0}F^{(i)}_x$ be the decomposition of $F_x$
in homogeneous polynomials $F^{(i)}_x$ of degree $i$. 
We will show that $F^{(i)}_x$ vanish identically for $i>0$, that is,
\begin{equation}\label{6.38}
F^{(i)}_x(z,\ov{z}^{\prime})=0 \quad \text{for all $i>0$ 
and $z,z^{\prime}\in\C^n$}\,.
\end{equation}
The first step is to prove
\begin{equation}\label{6.381}
F^{(i)}_x(0,\ov{z}^{\prime})=0 \quad \text{for all $i>0$ 
and all $z^{\prime}\in\C^n$}\,.
\end{equation}
Let us remark that since $T_{p}$ are self-adjoint we have
\begin{equation}\label{c6.31}
F^{(i)}_{x}(z, \ov{z}^{\prime})
= (F^{(i)}_{x}(z^{\prime}, \ov{z}))^*.
\end{equation}
Consider $\var^\prime>0$ as in hypothesis (iii) (c) of Theorem \ref{toet3.1}. 
For $Z^{\prime}\in\R^{2n}\simeq T_{x}X$ with $\abs{Z^{\prime}}<\var^\prime$ 
and $y= \exp_{x}^{X}(Z^{\prime})$,
set
\begin{equation}\label{b6.32}
\begin{split}
&F^{(i)}(x,y)= F^{(i)}_{x}(0,\ov{z}^{\prime})
\in \End(E_{x}), \\
&\wi{F}^{(i)}(x,y)=(F^{(i)}(y,x))^* 
\in  \End(E_{y}).
\end{split} 
\end{equation}
$F^{(i)}$ and $\wi{F}^{(i)}$ define smooth
sections on a neighborhood of the diagonal of $X\times X$.
Clearly, the $\wi{F}^{(i)}(x,y)$'s need not be polynomials in $z$ and
$\ov{z}^\prime$.

Since we wish to define global operators induced by these kernels, we use a
cut-off function in the neighborhood of the diagonal. Pick a smooth function
$\eta\in\cC^\infty(\R)$, such that $\eta(u)=1$ 
for $\abs{u}\leqslant\var^\prime/2$ and 
$\eta(u)=0$ for $\abs{u}\geqslant\var^\prime$.  

We denote by $F^{(i)}P_{p}$ and
$P_{p}\wi{F}^{(i)}$ the operators defined by the kernels
$$\eta(d(x,y))F^{(i)}(x,y)P_{p}(x,y)\quad \mbox{ and }\quad
\eta(d(x,y))P_{p}(x,y)\wi{F}^{(i)}(x,y)$$
with respect to $dv_X(y)$. Set
\begin{equation}\label{b6.33}
\cT_{p}= T_{p}
-\sum_{i\leqslant\deg F_{x}} (F^{(i)}P_{p})\, p^{i/2}.
\end{equation}
The operators $\cT_p$ extend naturally to bounded
operators on $L^2(X,E_p)$.

{}From \eqref{toe3.2} and \eqref{b6.33} we deduce that for 
all $k\geqslant1$ and $\abs{Z^{\prime}}\leqslant\var^{\prime}$,
we have the following expansion in the normal coordinates
around  $x_0\in X$ (which has to be understood ,
in the sense of \eqref{toe2.7}):
\begin{align}\label{b6.34}
 p^{-n} \cT_{p,x_0} (0,Z^{\prime})
 \cong  \sum_{r=1}^k  (R_{r,x_0} \cP_{x_0}) (0,\sqrt{p} Z^{\prime}) 
p^{-r/2} +\mO(p^{-(k+1)/2}),
\end{align}
for some polynomials $R_{r,x_0}$ of the same parity as $r$.
For simplicity let us define similarly to \eqref{b6.32} the kernel
\begin{equation}\label{b6.341}
R_{r,\,p}(x,y) = p^{n} (R_{r,x} \cP_x)(0,\sqrt{p} Z^{\prime})
\kappa^{-1/2}_{x}(Z^\prime)\eta(d(x,y)) \,,
\end{equation}
where $y= \exp_{x}^{X}(Z^{\prime})$, and denote by $R_{r,\,p}$
the operator defined by this kernel.
\begin{lemma}\label{toet3.4}
There exists $C>0$ such that for every $p>p_0$ and
$s\in L^2(X, E_p)$ we have
\begin{gather}
\label{b6.342}
\norm{\cT_p\,s}_{L^2}\leqslant C p^{-1/2}\norm{s}_{L^2}\,,\\
\|\cT_{p}^* s  \|_{L^2} \leqslant C p^{-1/2} \|s\|_{L^2}\,. \label{b6.39}
\end{gather}
\end{lemma}
\begin{proof}
In order to use \eqref{b6.34} we write
\begin{equation}
\norm{\cT_p\,s}_{L^2}\leqslant
\Big\|(\cT_p-\sum_{r=1}^k p^{-r/2}R_{r,\,p})\,s\Big\|_{L^2}+
\Big\|\sum_{r=1}^k p^{-r/2}R_{r,\,p}\,s\Big\|_{L^2}.
\end{equation}
By the Cauchy-Schwarz inequality we have
\begin{multline}\label{b6.381}
\big\|\big(\cT_p-\sum_{r=1}^k p^{-r/2}R_{r,\,p}\big)\,s \|_{L^2} ^2
\leqslant \int_{X} \Big(\int_{X}
\Big|\big(\cT_p-\sum_{r=1}^k p^{-r/2}R_{r,\,p}\big)(x,y)\Big| dv_{X}(y)\Big)\\
\times \Big(\int_{X}
\Big|\big(\cT_p-\sum_{r=1}^k p^{-r/2}R_{r,\,p}\big)(x,y)\Big| |s(y)|^2
dv_{X}(y)\Big)dv_{X}(x).
\end{multline}
We split then the inner integrals into integrals over $B^X(x,\var^\prime)$ 
and $X\smallsetminus B^X(x,\var^\prime)$ and use the fact that the kernel 
of $\cT_p-\sum_{r=1}^k p^{-r/2}R_{r,\,p}$
has the growth $\cO(p^{-\infty})$ outside the diagonal. Indeed, this follows 
by \eqref{toe3.1}, the definition
of the operators $F^{(i)}P_{p}$ in \eqref{b6.33} 
(using the cut-off function $\eta$), 
and the definition \eqref{b6.341} of $R_{r,\,p}$ (which involves $\cP$).
We get for example, uniformly in $x\in X$,
\begin{multline}\label{b6.382}
\int_{X}\Big|\big(\cT_p-\sum_{r=1}^k p^{-r/2}R_{r,\,p}\big)(x,y)\Big||s(y)|^2 
dv_{X}(y)\\
=\int_{B^X(x,\var^\prime)}\Big|(\cT_p
-\sum_{r=1}^k p^{-r/2}R_{r,\,p}\big)(x,y)\Big||s(y)|^2 dv_{X}(y)\\+
\cO(p^{-\infty})\int_{X\smallsetminus B^X(x,\var^\prime)} |s(y)|^2 dv_{X}(y).
\end{multline} 
By \eqref{toe2.8} and \eqref{b6.34} applied for $k$ sufficiently large, 
which we fix from now on, we obtain
\begin{multline}\label{b6.383}
\int_{B^X(x,\var^\prime)}\Big|\big(\cT_p
-\sum_{r=1}^k p^{-r/2}R_{r,\,p}\big)(x,y)\Big||s(y)|^2 dv_{X}(y)\\
=\cO(p^{-1})\int_{B^X(x,\var^\prime)} |s(y)|^2dv_{X}(y).
\end{multline}
In the same vein we obtain
\begin{equation}\label{b6.384}
\int_{X}\Big|\big(\cT_p-\sum_{r=1}^k p^{-r/2}R_{r,\,p}\big)(x,y)\Big|dv_{X}(y)=
\cO(p^{-1})+\cO(p^{-\infty}).
\end{equation} 
Combining \eqref{b6.381}-\eqref{b6.384} we infer
\begin{equation}\label{b6.385}
\Big\|\big(\cT_p-\sum_{r=1}^k p^{-r/2}R_{r,\,p}\big)\,s \Big\|_{L^2} 
\leqslant
C\,\,p^{-1}\norm{s}_{L^2}\,,\quad s\in L^2(X, E_p)\,.
\end{equation}
A similar proof as for \eqref{b6.385} delivers for $s\in L^2(X, E_p)$
\begin{equation}\label{b6.386} 
\big\|R_{r,\,p}\,s\big\|_{L^2}\leqslant C \norm{s}_{L^2}\,,
\end{equation} 
which implies
\begin{equation}\label{b6.387} 
\Big\|\sum_{r=1}^k p^{-r/2}R_{r,\,p}\,s\Big\|_{L^2}
\leqslant C\,p^{-1/2}\norm{s}_{L^2}\,,
\quad \mbox{for } s\in L^2(X, E_p)\,,
\end{equation} 
for some constant $C>0$. Relations \eqref{b6.385} and \eqref{b6.387} entail
\eqref{b6.342}, which is equivalent to \eqref{b6.39}, 
by taking the adjoint.
\end{proof}
Let us consider the Taylor development of $\wi{F}^{(i)}$ 
in normal coordinates around $x$ with $y=\exp_{x}^{X}(Z^{\prime})$:
\begin{equation}\label{taylor}
\wi{F}^{(i)}(x,y)
=\sum_{|\alpha|\leqslant k}
\frac{\partial^\alpha\wi{F}^{(i)}}
{\partial Z^{\prime \alpha}}(x,0)
\frac{(\sqrt{p}Z^{\prime}) ^\alpha}{\alpha !}p^{-|\alpha|/2} 
+ \cO(|Z^\prime|^{k+1}).
\end{equation}
The next step in the proof of Proposition \ref{toet3.2} is the following.
\begin{lemma}\label{toet3.5}
For every $j>0$ we have
\begin{equation}\label{a6.41}
\frac{\partial ^\alpha \wi{F}^{(i)}}
{\partial Z^{\prime \alpha}}(x,0)=0\,, \quad \text{for} \,\,
  i-|\alpha|\geqslant j>0.
\end{equation}
\end{lemma}
\begin{proof}
The definition \eqref{b6.33} of $\cT_p$ shows that
\begin{equation}\label{b6.40}
\cT_p^* = T_{p} - \sum_{i\leqslant\deg F_{x}} p^{i/2}(P_{p}\wi{F}^{(i)})\,.
\end{equation} 
Let us develop the sum in the right hand-side. 
Combining the Taylor development \eqref{taylor} 
with the expansion \eqref{toe2.9} of the Bergman kernel we obtain:
\begin{multline}\label{a6.39}
p^{-n} \sum_{i} (P_{p}\,\wi{F}^{(i)})_{x_0}(0,Z^{\prime})p^{i/2}\\
\cong \sum_i \sum_{|\alpha|,\,r\leqslant k}\left(J_{r,\,x_0} 
\cP_{x_0} \right)(0, \sqrt{p} Z^{\prime})
\frac{\partial ^\alpha \wi{F}^{(i)}}{\partial Z^{\prime\alpha}}(x_0,0)
\frac{(\sqrt{p}Z^{\prime})^\alpha}{\alpha !}
p^{(i -|\alpha|-r)/2}\\
 + \mO(p^{(\deg F-k-1)/2}) ,
\end{multline}
where $k\geqslant\deg F_{x}+1$.
Having in mind \eqref{b6.39}, 
this is only possible if for every $j>0$ the coefficients of $p^{j/2}$
in the right hand side of \eqref{a6.39} vanish.
Thus we have for every $j>0$:
\begin{equation}\label{a6.40}
\sum_{l=j}^{\deg F_{x}} \sum_{|\alpha|+r=l-j}
J_{r,\,x_0} (0, \sqrt{p} Z^{\prime})
\frac{\partial ^\alpha \wi{F}^{(l)}}
{\partial Z^{\prime\alpha}}(x_0,0)
\frac{(\sqrt{p}Z^{\prime})^\alpha}{\alpha !}=0.
\end{equation}
{}From \eqref{a6.40}, we will prove by recurrence that 
for any $j>0$, \eqref{a6.41} holds.
As the first step of the recurrence let us take $j=\deg F_{x}$ in \eqref{a6.40}. 
Since $J_{0,\,x_0}=I_{\C\otimes E}$ (see \eqref{toe2.10}),
we get immediately $\wi{F}^{(\deg F_{x})}(x_0,0)=0$. 
Hence \eqref{a6.41} holds for  $j=\deg F_{x}$.

Assume that \eqref{a6.41} holds for $j>j_0>0$.
Then for $j=j_0$, the coefficient with $r>0$ in \eqref{a6.40} is zero.
Since $J_{0,\,x_0}=I_{\C\otimes E}$, \eqref{a6.40} reads 
\begin{equation}\label{a6.42}
 \sum_{\alpha}
\frac{\partial ^\alpha \wi{F}^{(j_0+|\alpha|)}}
{\partial Z^{\prime\alpha}}(x_0,0)
\frac{(\sqrt{p}Z^{\prime})^\alpha}{\alpha !}=0\,,
\end{equation}
which entails \eqref{a6.41} for $j=j_0$.
The proof of \eqref{a6.41} is complete.
\end{proof}
\begin{lemma}\label{toet3.6} For $i>0$, we have 
\begin{equation}\label{d6.38}
\frac{\partial ^\alpha F^{(i)}_x}
{\partial \ov{z}^{\prime\alpha}}(0,0)=0\,,\quad |\alpha|\leqslant i\,.
\end{equation}
Therefore $F^{(i)}_x(0,\ov{z}^{\prime})=0$ for all $i>0$ 
and $z^{\prime}\in\C^n$ i.e. \eqref{6.381} holds true. Moreover,
\begin{equation}\label{c6.38}
F^{(i)}_x(z,0)=0 \quad \text{for all $i>0$ and all $z\in\C^n$}\,.
\end{equation}
\end{lemma}
\begin{proof}
Let us start with some preliminary observations.

In view of \eqref{b6.39}, \eqref{a6.41} and \eqref{a6.39},
a comparison the coefficient of $p^0$ in
\eqref{toe3.10} and \eqref{b6.40} yields
\begin{equation}\label{a6.43}
\wi{F}^{(i)}(x,Z^{\prime})
=F^{(i)}_{x} (0, \ov{z}^{\prime}) + \cO(|Z^{\prime}|^{i+1}).
\end{equation}
Using the definition \eqref{b6.32} of $\wi{F}^{(i)}(x,Z')$, 
and taking the adjoint of \eqref{a6.43} we get
\begin{equation}\label{b6.41}
F^{(i)}(Z^{\prime}, x)
= (F^{(i)}_{x}(0, \ov{z}^{\prime}))^*
+ \cO(|Z^{\prime}|^{i+1}),
\end{equation}
which implies 
\begin{equation}\label{b6.43}
\frac{\partial ^\alpha}{\partial z^{\alpha}} F^{(i)}(\cdot, x)|_{x}
= \Big(\Big(\frac{\partial^\alpha}{\partial \ov{z}^{\prime\alpha}}
F^{(i)}_{x}\Big)(0, \ov{z}^{\prime})\Big)^*\,,
\quad\text{for $|\alpha|\leqslant i$}\,,
\end{equation}
so in order to prove the Lemma it suffices to show that 
\begin{equation}\label{b6.432}
\frac{\partial ^\alpha}{\partial z^{\alpha}} F^{(i)}(\cdot, x)|_{x}
=0\,,\quad\text{for $|\alpha|\leqslant i$}\,.
\end{equation}
We prove this by induction over $|\alpha|$. For $|\alpha|=0$, it is obvious 
that $F^{(i)}(0, x)=0$, since $F^{(i)}(\cdot, x)$ is a homogeneous polynomial of degree $i>0$.
For the induction step let $j_{X}: X\to X\times X$ be the diagonal injection.
By Lemma \ref{toet3.3} and the definition \eqref{b6.32} of $F^{(i)}(x,y)$,
\begin{equation}\label{b6.410}
\frac{\partial}{\partial z^{\prime}_j}
F^{(i)}(x,y)=0\,, \quad \text{near $j_{X}(X)$},
\end{equation}
where $y= \exp_{x}^{X}(Z^{\,\prime})$.
Assume now that $\alpha \in\N^{n}$ and \eqref{b6.432} holds for $|\alpha|-1$. 
Consider $j$ with $\alpha_j>0$ and set $\alpha^\prime=(\alpha_1,
\cdots, \alpha_j-1,\cdots, \alpha_{n})$.

Taking the derivative of \eqref{b6.32} and using the induction hypothesis 
and \eqref{b6.410}, we have
\begin{equation}\label{b6.42}
\frac{\partial ^\alpha}{\partial z^{\alpha}} F^{(i)}(\cdot, x)|_{x}
=\frac{\partial }{\partial z_j}
 j_{X}^* \Big (\frac{\partial ^{\alpha^\prime}}{\partial z^{\alpha^\prime}}
 F^{(i)}\Big)\Big|_{x}
- \frac{\partial ^{\alpha^\prime}}{\partial z^{\alpha^\prime}}
\frac{\partial }{\partial z^{\prime}_j}F^{(i)}(\cdot, \cdot)\Big|_{0,0}
=0.
\end{equation}
Thus \eqref{d6.38} is proved. The identity \eqref{6.381} follows too, since it is equivalent to \eqref{d6.38}.
Further, \eqref{c6.38} results from \eqref{6.381} and \eqref{c6.31}.
This finishes the proof of Lemma \ref{toet3.6}.
\end{proof}
\begin{lemma}\label{toet3.7}
We have
$F^{(i)}_x(z,\ov{z}^{\prime})=0$ for all $i>0$ and $z,z^{\prime}\in\C^n$\,.
\end{lemma}
\begin{proof}
Let us consider the operator
\begin{equation}\label{c6.39}
\frac{1}{\sqrt{p}} P_{p} \left(\nabla^{E_p}_{X,x} 
T_{p}\right)P_{p}\, \mbox{  with  }\, X\in \cC^\infty(X,TX), \quad
X(x_0)= \frac{\partial}{\partial z_j}
+ \frac{\partial}{\partial\ov{z}_j}.
\end{equation}
The leading term of its asymptotic expansion \eqref{toe2.7} is 
\begin{equation}\label{c6.40}
\Big(\frac{\partial}{\partial z_j}F_{x_0}\Big)
(\sqrt{p}\,z, \sqrt{p}\,\ov{z}^{\,\prime})
\cP_{x_0}(\sqrt{p}\,Z, \sqrt{p}\,Z^{\,\prime})\, .
\end{equation}
By \eqref{6.381} and \eqref{c6.38}, 
$(\frac{\partial}{\partial z_j}F_{x_0})(z,\ov{z}^{\prime})$
is an odd polynomial in $z$,$\ov{z}^{\prime }$ whose constant
term vanishes. We reiterate the arguments from \eqref{b6.33}--\eqref{b6.43} 
by replacing the operator $T_p$ with the operator \eqref{c6.39}; 
we get for $i>0$,
\begin{equation}\label{c6.41}
\frac{\partial}{\partial z_j}F_{x}^{(i)}(0,\ov{z}^{\,\prime})=0.
\end{equation}
By \eqref{c6.31} and  \eqref{c6.41},
\begin{equation}\label{c6.412}
\frac{\partial}{\partial \ov{z}^{\,\prime}_j}
F_{x}^{(i)}(z,0)=0.
\end{equation}
By continuing this process, we show that
for all $i>0, \alpha\in \Z^n$, $z,z^\prime\in \C^n$,
\begin{equation}\label{c6.413}
\frac{\partial^\alpha}{\partial z^\alpha}
F_{x}^{(i)}(0,\ov{z}^{\,\prime})=
\frac{\partial^\alpha}{\partial\ov{z}^{\,\prime\alpha}}
F_{x}^{(i)}(z,0)=0.
\end{equation}
Thus the Lemma is proved and \eqref{6.38} holds true.
\end{proof}
\noindent
The Lemma \ref{toet3.7} finishes the proof of Proposition \ref{toet3.2}.
\end{proof}
\noindent
We come now to the proof of the first induction step leading to \eqref{toe3.4}.
\begin{prop}\label{toet3.8}
We have 
$p^{-n} (T_{p} -  T_{g_0,\,p})_{x_0}(Z,Z^\prime)\cong \mO(p^{-1})$ 
{\rm(}in the sense of Notation \ref{noe2.7}{\rm)}.
Consequently $T_{p}=P_p\,g_0\,P_p + \mO(p^{-1})$ {\rm(}i.e., 
relation \eqref{toe3.6}{\rm)} holds true in the sense of \eqref{atoe2.1}\,.
\end{prop}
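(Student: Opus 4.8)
The plan is to obtain \eqref{toe3.60}--\eqref{toe3.6} by cancelling the two leading model terms of $T_p$ and $T_{g_0,\,p}$ and then re-running the argument of Proposition \ref{toet3.2} one half-order lower. Recall that $T_p$ is self-adjoint (cf. \eqref{toe3.3}) and that $g_0$ defined in \eqref{toe3.5} is self-adjoint, since taking $i=0$ in \eqref{c6.31} yields $g_0(x_0)=g_0(x_0)^*$ for every $x_0\in X$; hence $T_{g_0,\,p}$ is self-adjoint, and so is $\mathcal{D}_p:=T_{p}-T_{g_0,\,p}$. Now $T_p$ satisfies conditions (i)--(iii) of Theorem \ref{toet3.1} by hypothesis, and $T_{g_0,\,p}$ satisfies them by Lemmas \ref{toet2.1} and \ref{toet2.3}; therefore $\mathcal{D}_p$ does too, with expansion coefficients $\widehat{Q}_{r,\,x_0}:=\mathcal{Q}_{r,\,x_0}-Q_{r,\,x_0}(g_0)$ in the sense of \eqref{toe3.2} and \eqref{toe2.13}. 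By Proposition \ref{toet3.2} we have $\mathcal{Q}_{0,\,x_0}=g_0(x_0)\,I_{\C\otimes E}$, while by \eqref{toe2.15} we have $Q_{0,\,x_0}(g_0)=g_0(x_0)\,I_{\C\otimes E}$; hence $\widehat{Q}_{0,\,x_0}=0$ for all $x_0$, that is, $p^{-n}\mathcal{D}_{p,\,x_0}(Z,Z')\cong\mathcal{O}(p^{-1/2})$ in the sense of Notation \ref{noe2.7}.

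Next I would upgrade this kernel estimate to an operator bound $\|\mathcal{D}_p\|\leqslant Cp^{-1/2}$ by the Cauchy--Schwarz (Schur test) argument already used in the proof of Lemma \ref{toet3.4}: the vanishing of $\widehat{Q}_{0,\,x_0}$ together with \eqref{toe2.6} (which gives the $\mathcal{O}(p^{-\infty})$ decay of $\mathcal{D}_p$ off the diagonal) makes $\int_X|\mathcal{D}_p(x,y)|\,dv_X(y)=\mathcal{O}(p^{-1/2})$ uniformly in $x$, and symmetrically in $y$. Consequently the family $S_p:=\sqrt{p}\,\mathcal{D}_p$ is uniformly bounded, self-adjoint, satisfies $P_pS_pP_p=S_p$, is $\mathcal{O}(p^{-\infty})$ off the diagonal, and its kernel admits the expansion
\[
p^{-n}S_{p,\,x_0}(Z,Z')\cong\sum_{r\geqslant0}\big(\widehat{Q}_{r+1,\,x_0}\,\cP_{x_0}\big)(\sqrt{p}Z,\sqrt{p}Z')\,p^{-r/2}\,,
\]
whose leading ($r=0$) coefficient is $\widehat{Q}_{1,\,x_0}$, a polynomial of odd parity.

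The heart of the matter — and the step I expect to be the main obstacle — is to prove $\widehat{Q}_{1,\,x_0}\equiv 0$. For this I would re-run the chain of Lemmas \ref{toet3.3}--\ref{toet3.7} verbatim, with $S_p$ in place of $T_p$: the identity $P_pS_pP_p=S_p$ first forces $\widehat{Q}_{1,\,x_0}\in\End(E_{x_0})\circ I_{\C\otimes E}$ and polynomial in $z,\overline{z}^{\,\prime}$ (as in Lemma \ref{toet3.3}, using $\widehat{Q}_{0,\,x_0}=0$ so that only the $J_0$--$\widehat{Q}_1$--$J_0$ term survives at leading order); then the self-adjointness of $S_p$, the norm bound $\|S_p^*s\|\leqslant C\|s\|$, and the matching of powers of $p$ against the fixed Bergman coefficients $J_{r,\,x_0}$ of Theorem \ref{tue17} force every homogeneous component of $\widehat{Q}_{1,\,x_0}$ of positive degree to vanish, so $\widehat{Q}_{1,\,x_0}$ is a constant. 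The only change from Proposition \ref{toet3.2} is that the auxiliary coefficients now carry parity $r+1$ rather than $r$; this is immaterial, since those parities serve only as bookkeeping and do not enter the power-of-$p$ matching in \eqref{a6.39}--\eqref{a6.40}. Finally, a constant polynomial of odd parity has no term of degree $0$, hence must be $0$, so $\widehat{Q}_{1,\,x_0}=0$.

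With $\widehat{Q}_{0,\,x_0}=\widehat{Q}_{1,\,x_0}=0$ we obtain $p^{-n}\mathcal{D}_{p,\,x_0}(Z,Z')\cong\mathcal{O}(p^{-1})$, which is exactly \eqref{toe3.60}; applying once more the Cauchy--Schwarz estimate of Lemma \ref{toet3.4} then promotes this to $\|\mathcal{D}_p\|=\mathcal{O}(p^{-1})$, i.e. $T_p=P_p\,g_0\,P_p+\mathcal{O}(p^{-1})$ in the sense of \eqref{atoe2.1}, which is \eqref{toe3.6}. Thus the only genuinely new work beyond the already-established machinery is the parity-shifted repetition of Proposition \ref{toet3.2}'s vanishing-propagation argument for $S_p$.
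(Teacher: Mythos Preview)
Your proposal is correct and follows essentially the same route as the paper. The paper subtracts the expansions \eqref{6.33a} and \eqref{6.33b}, identifies the coefficient of $p^{-1/2}$ in the difference as $F_{1,\,x}:=\mQ_{1,\,x}-Q_{1,\,x}(g_0)$ (your $\widehat{Q}_{1,\,x_0}$), and then proves Lemma~\ref{toet3.9} ($F_{1,\,x}\equiv 0$) by repeating the argument of \eqref{6.38} with the modified operator $\cT_{p,1}$ of \eqref{b6.33a}; your rescaled operator $S_p=\sqrt{p}\,\mathcal{D}_p$ together with the re-run of Lemmas~\ref{toet3.3}--\ref{toet3.7} is precisely this, up to the global factor $\sqrt{p}$, and your observation that the parity shift is immaterial for the power-of-$p$ matching and only re-enters at the end (forcing the surviving constant to vanish) is exactly how the paper uses oddness of $F_{1,\,x}$.
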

\begin{proof}
Let us compare the asymptotic expansion of $T_p$ and 
$T_{g_0,\,p}=P_p\,g_0\,P_p$. Using the Notation \ref{noe2.7}, 
the expansion \eqref{toe2.13} (for $k=1$) reads
\begin{equation}\label{6.33a}
p^{-n}T_{g_0,\,p,\,x_0}(Z,Z^\prime)  \cong 
(g_{0}(x_0) I_{\C\otimes E}\cP_{x_0}
+Q_{1,\,x_0}(g_0)\cP_{x_0}\,p^{-1/2})(\sqrt{p}Z,\sqrt{p}Z^\prime) 
+\mO(p^{-1})\,,
\end{equation}
since $\mQ_{0,\,x_0}(g_0)=g_0(x_0) I_{\C\otimes E}$ by \eqref{toe2.15}. 
The expansion \eqref{toe3.2} (also for $k=1$) takes the form
\begin{equation}\label{6.33b}
p^{-n} T_{p,\,x_0} \cong (g_{0}(x_0) I_{\C\otimes E}\cP_{x_0}
+\mQ_{1,\,x_0}\cP_{x_0}\,p^{-1/2})(\sqrt{p}Z,\sqrt{p}Z^\prime)+\mO(p^{-1})\,,
\end{equation}
where we have used Proposition \ref{toet3.2} and the definition \eqref{toe3.5} 
of $g_0$.
Thus, subtracting \eqref{6.33a} from \eqref{6.33b} we obtain
\begin{equation}\label{6.33d}
p^{-n} (T_{p} -  T_{g_0,\,p})_{x_0}(Z,Z^\prime)
 \cong \big((\mQ_{1,\,x_0}-Q_{1,\,x_0}(g_0))\cP_{x_0}\big)
(\sqrt{p}Z,\sqrt{p}Z^\prime)\,p^{-1/2}+\mO(p^{-1})\,.
\end{equation}
Thus it suffices to prove:
\begin{lemma}\label{toet3.9}
\begin{equation}\label{6.33e}
F_{1,\,x}:=\mQ_{1,\,x}-Q_{1,\,x}(g_0)\equiv0\,.
\end{equation}
\end{lemma}
\begin{proof}
We note first that $F_{1,\,x}$ is an odd polynomial in $z$ and
$\ov{z}^{\,\prime}$; we verify this statement as in Lemma \ref{toet3.3}. Thus the constant term of $F_{1,\,x}$ vanishes.
To show that the rest of the terms vanish, we consider the decomposition
$F_{1,\,x}=\sum_{i\geqslant0}F^{(i)}_{1,\,x}$
in homogeneous polynomials $F^{(i)}_{1,\,x}$ of degree $i$. To prove \eqref{6.33e} it suffices to show that
\begin{equation}\label{6.38a}
F^{(i)}_{1,\,x}(z,\ov{z}^{\prime})=0 \quad \text{for all $i>0$ 
and $z,z^{\prime}\in \C^n$}\,.
\end{equation}
The proof of \eqref{6.38a} is similar to that of \eqref{6.38}. 
Namely, we define as in \eqref{b6.32} the operator $F^{(i)}_{1}$, 
by replacing $F^{(i)}_{x}(0,\ov{z}^{\,\prime})$ by 
$F^{(i)}_{1,\,x}(0,\ov{z}^{\,\prime})$, and we set 
(analogously to \eqref{b6.33})
\begin{equation}\label{b6.33a}
\cT_{p,1}= T_{p}-P_p\,g_0\,P_p 
-\sum_{i\leqslant\deg F_{1}} (F^{(i)}_{1}P_{p})\, p^{(i-1)/2}.
\end{equation}
Due to \eqref{toe2.13} and  \eqref{toe3.2}, 
there exist polynomials $\wi{R}_{r,\,x_0}\in\C[Z,Z^{\,\prime}]$
of the same parity as $r$ such that 
the following expansion in the normal coordinates
around $x_0\in X$ holds for $k\geqslant2$ and 
$\abs{Z^{\prime}}\leqslant\var^\prime/2$:
\begin{align}\label{b6.34a}
 p^{-n} \cT_{p,1,\,x_0} (0,Z^{\prime})
 \cong \sum_{r=2}^k  (\wi{R}_{r,\,x_0} \cP_{x_0}) (0,\sqrt{p} Z^{\prime}) 
p^{-\frac{r}{2}} + \mO(p^{-(k+1)/2}),
\end{align}
This is the analogue of \eqref{b6.34}.
Now we can repeat with obvious modifications the proof of \eqref{6.38} 
and obtain the analogue of \eqref{6.38} with $F_x$ replaced by $F_{1,\,x}$\,. 
This completes the proof of Lemma \ref{toet3.9}.
\end{proof}
\noindent
Lemma \ref{toet3.9} and the expansion \eqref{6.33d} imply immediately 
Proposition \ref{toet3.8}.
\end{proof}

\begin{proof}[Proof of Theorem \ref{toet3.1}]
Proposition \ref{toet3.8} shows that the asymptotic expansion 
\eqref{toe3.4} of $T_p$  holds for $m=0$. Moreover, if $T_{p}$ is self-adjoint, then from (4.70), (4.71) follows that $g_{0}$ is also self-adjoint.  We show inductively that
 \eqref{toe3.4} holds for every $m\in\N$. To prove \eqref{toe3.4} 
for $m=1$ let us consider the operator $p(T_p-P_pg_0P_p)$.
We have to show now that $p\big(T_p-T_{g_0,\,p}\big)$ 
satisfies the hypotheses of Theorem \ref{toet3.1}.
The first two conditions are easily verified. To prove the third, 
just substract the asymptotics of 
$T_{p,\,x_0}(Z,Z^\prime)$ (given by \eqref{toe3.2}) and 
$T_{g_0,\,p,\,x_0}(Z,Z^\prime)$ (given by \eqref{toe2.13}). 
Taking into account Proposition \ref{toet3.2} and \eqref{6.33e} 
the coefficients of $p^0$ and 
$p^{-1/2}$ in the difference vanish, which yields the desired conclusion. 

Propositions \ref{toet3.2} and \ref{toet3.8} applied to
$p(T_p-P_pg_0P_p)$ yield $g_1\in \cC^\infty(X,\End(E))$
such that \eqref{toe3.4} holds true for $m=1$.

We continue in this way the induction process to get \eqref{toe3.4}
for any $m$. This completes the proof of Theorem \ref{toet3.1}.
\end{proof}

\subsection{Algebra of Toeplitz operators}\label{toes4}

The Poisson bracket 
$\{ \,\cdot\, , \,\cdot\, \}$
on $(X,2\pi \om)$ is defined as follows.  For $f, g\in \cC^\infty (X)$,
let $\xi_{f}$ be the Hamiltonian vector field generated by $f$, 
which is defined by $2 \pi i_{\xi_{f}}\om=df$. Then 
\begin{align}\label{toe4.1}
\{f, g\}:= \xi_{f}(dg).
\end{align}

One of our main goals is to show that Theorem \ref{toet4.1} holds, 
thus the set of Toeplitz operators is 
closed under the composition of operators, so forms an algebra. 

\begin{proof}[Proof of Theorem \ref{toet4.1}]
Firstly, it is obvious that $P_p\,T_{f,\,p}\,T_{g,\,p}\,P_p=T_{f,\,p}\,T_{g,\,p}$. 
Lemmas \ref{toet2.1} and \ref{toet2.3} imply
$T_{f,\,p}\,T_{g,\,p}$ verifies \eqref{toe3.1}.
Like in \eqref{toe2.17}, we have
for $Z,Z^\prime \in T_{x_0}X$, $\abs{Z},\abs{Z^{\prime}}<\var/4$:
\begin{multline}\label{toe4.5}
(T_{f,\,p}\,T_{g,\,p})_{x_0}(Z,Z^\prime)= \int_{{ T_{x_0}X}}
T_{f,\,p,\,x_0}(Z,Z^{\prime\prime})
\rho(4|Z^{\prime\prime}|/\var)
T_{g,\,p,\,x_0}(Z^{\prime\prime},Z^{\prime}) \\
\times \kappa_{x_0}(Z^{\prime\prime})
\,dv_{TX}(Z^{\prime\prime})+ \cO(p^{-\infty}).
\end{multline}  
By Lemma \ref{toet2.3} and \eqref{toe4.5}, we deduce as in the proof of 
Lemma \ref{toet2.3}, that for $Z,Z^\prime \in T_{x_0}X$, 
$\abs{Z},\abs{Z^{\prime}}<\var/4$, we have
\begin{align} \label{toe4.6}
p^{-n}(T_{f,\,p}\,T_{g,\,p})_{x_0}(Z,Z^\prime)\cong 
\sum^k_{r=0}(Q_{r,\,x_0}(f,g)\cP_{x_0})(\sqrt{p}\,Z,\sqrt{p}\,Z^{\prime})
p^{-\frac{r}{2}} + \mO(p^{-\frac{k+1}{2}}),
\end{align}
and with the notation \eqref{toe1.6}, 
\begin{align} \label{toe4.7}
Q_{r,\,x_0}(f,g)= \sum_{r_1+r_2=r}  \cK[Q_{r_1,\,x_0}(f), Q_{r_2,\,x_0}(g)].
\end{align}
Thus $T_{f,\,p}\,T_{g,\,p}$ is a Toeplitz operator by Theorem \ref{toet3.1}.
Moreover, it follows from the proofs of Lemma \ref{toet2.3} 
and Theorem \ref{toet3.1}
that $g_l=C_l(f,g)$, where $C_l$ are bidifferential operators. 

Recall that we denote by
$I_{\C\otimes E}: \Lambda (T^{*(0,1)}X)\otimes E\to \C\otimes E$ 
the natural projection. 
{}From \eqref{toe1.6}, \eqref{toe2.15} and \eqref{toe4.7}, we get
\begin{equation} \label{toe4.8}
C_0(f,g)(x)= I_{\C\otimes E}Q_{0,\,x}(f,g)|_{\C\otimes E}
= I_{\C\otimes E}\cK[Q_{0,\,x}(f), Q_{0,\,x}(g)]|_{\C\otimes E}
= f(x)g(x)\,.
\end{equation}

By the proof of Theorem \ref{toet3.1} 
(cf. Proposition \ref{toet3.2}, Lemma \ref{toet3.9} and \eqref{toe3.5}), we get
\begin{align} \label{toe4.10}
\begin{split}
&Q_{1,\,x}(f,g)=Q_{1,\,x}(C_0(f,g)),\\
&C_1(f,\,g)= I_{\C\otimes E}(Q_{2,\,x}(f,g)
- Q_{2,\,x}(C_0(f,\,g)))(0,0)|_{\C\otimes E}.
\end{split}\end{align}
Moreover, by \eqref{toe2.15} and \eqref{toe4.7}, we get
\begin{multline} \label{toe4.11}
Q_{2,\,x}(f,g) = \cK[f(x)I_{\C\otimes E}, Q_{2,\,x}(g)]
+\cK[Q_{1,\,x}(f), Q_{1,\,x}(g)]\\
 +\cK[Q_{2,\,x}(f), g(x) I_{\C\otimes E}].
\end{multline}

Now $T_{f,\,p}P_p=P_p T_{f,\,p}$ implies 
$Q_{r,x}(f,1)= Q_{r,x}(1,f)$, so we get from \eqref{toe4.11}:
\begin{multline} \label{toe4.13}
\cK[J_{0,\,x}, Q_{2,\,x}(f)] -\cK[Q_{2,\,x}(f),J_{0,\,x}]\\
=  \cK[Q_{1,\,x}(f),J_{1,\,x}]
 -\cK[J_{1,\,x}, Q_{1,\,x}(f)]
+ \cK[f(x)J_{0,\,x},J_{2,\,x}]-  \cK[J_{2,\,x}, f(x)J_{0,\,x}].
\end{multline}

Assume now that $f,g\in \cC^\infty (X)$.
By \eqref{toe4.10}, \eqref{toe4.11} and  \eqref{toe4.13}, we get
\begin{multline} \label{toe4.12}
C_1(f,g)(x)-C_1(g,f)(x)
= I_{\C\otimes E}\Big[ \cK[Q_{1,\,x}(f), Q_{1,\,x}(g)]
- \cK[Q_{1,\,x}(g), Q_{1,\,x}(f)]\\
+f(x)\Big( \cK[Q_{1,\,x}(g),J_{1,\,x}]- \cK[J_{1,\,x},Q_{1,\,x}(g)]\Big)\\
- g(x)\Big( \cK[Q_{1,\,x}(f),J_{1,\,x}]- \cK[J_{1,\,x},Q_{1,\,x}(f)]\Big)
\Big]\Big|_{\C\otimes E} .
\end{multline}

By Lemma \ref{toet2.5}, Remark \ref{toet2.6}, we have 
\begin{multline}\label{toe4.12a}
\cK[Q_{1,\,x}(f), Q_{1,\,x}(g)]
= \cK\Big[\cK[1, \tfrac{\partial f_x}{\partial Z_j}(0) Z_j], \,
\cK[1, \tfrac{\partial g_x}{\partial Z_j}(0) Z_j]\Big]\\
+ \cK[f(x)J_1,  Q_{1,\,x}(g)]+ \cK[Q_{1,\,x}(f), g(x)J_1]
- \cK[f(x)J_1, g(x)J_1].
\end{multline}

{}From \eqref{toe2.10}, 
\eqref{toe4.12} and \eqref{toe4.12a}, we get
\begin{multline}\label{toe4.14}
C_1(f,g)(x)-C_1(g,f)(x)
=\cK\Big[\cK[1, \tfrac{\partial f_x}{\partial Z_j}(0) Z_j], \,
\cK[1, \tfrac{\partial g_x}{\partial Z_j}(0) Z_j]\Big]\\
- \cK\Big[\cK[1, \tfrac{\partial g_x}{\partial Z_j}(0) Z_j], \,
\cK[1, \tfrac{\partial f_x}{\partial Z_j}(0) Z_j]\Big].
\end{multline}

{}From \eqref{toe1.13} we get
\begin{equation}\label{toe4.15}
\begin{split}
\cK\Big[1, \tfrac{\partial f_x}{\partial Z_j}(0) Z_j\Big]
= \tfrac{\partial f_x}{\partial z_i}(0)
z_i+\tfrac{\partial f_x}{\partial \ov{z}_i}(0)
\ov{z}^{\,\prime}_i.
\end{split}
\end{equation}
 
Plugging \eqref{toe4.15} into \eqref{toe4.14} and using \eqref{toe1.13} we finally obtain:
\begin{equation}\label{toe4.16}
\begin{split}
C_1(f,g)(x)-C_1(g,f)(x)&=\sum_{i=1}^n \frac{2}{a_i}
\Big[\tfrac{\partial f_x}{\partial \ov{z}_i}(0)\,
\tfrac{\partial g_x}{\partial z_i}(0)-\tfrac{\partial f_x}{\partial z_i}(0)\,
\tfrac{\partial g_x}{\partial \ov{z}_i}(0)\Big]\,\Id_{E}\\
&=\imat\{f,g\} \,\Id_{E}\,.
\end{split}
\end{equation} 
This finishes the proof of Theorem \ref{toet4.1}.
\end{proof}

The next result and Theorem \ref{toet4.1} show that 
the Berezin-Toeplitz quantization has the correct semi-classical behavior.
\begin{thm}\label{toet4.2} For $f\in \cC^\infty(X, \End(E))$,
the norm of $T_{f,\,p}$ satisfies
\begin{equation}\label{toe4.17}
\lim_{p\to\infty}\norm{T_{f,\,p}}={\norm f}_\infty
:=\sup_{0\neq u\in E_x, x\in X} |f(x)(u)|_{h^{E}}/ |u|_{h^{E}}.
\end{equation}
\end{thm}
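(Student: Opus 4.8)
The plan is to establish the two inequalities $\limsup_{p\to\infty}\norm{T_{f,\,p}}\leqslant\norm f_\infty$ and $\liminf_{p\to\infty}\norm{T_{f,\,p}}\geqslant\norm f_\infty$ separately. The upper bound is essentially formal: for any $s\in L^2(X,E_p)$ one has $\norm{T_{f,\,p}s}_{L^2}=\norm{P_p f P_p s}_{L^2}\leqslant\norm{f P_p s}_{L^2}$, and since $f$ acts fiberwise as a bundle endomorphism commuting with the $\Lambda(T^{*(0,1)}X)\otimes L^p$ factor, $\norm{f P_p s}_{L^2}\leqslant\norm f_\infty\norm{P_p s}_{L^2}\leqslant\norm f_\infty\norm s_{L^2}$. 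Hence $\norm{T_{f,\,p}}\leqslant\norm f_\infty$ for every $p$, which already gives the $\limsup$ half without any asymptotics.

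For the lower bound, fix $\delta>0$ and pick $x_0\in X$ and a unit vector $u_0\in E_{x_0}$ with $\abs{f(x_0)(u_0)}_{h^E}\geqslant\norm f_\infty-\delta$. The idea is to test $T_{f,\,p}$ against a section concentrated near $x_0$ built out of the model Bergman kernel. Concretely, using the trivializations and normal coordinates of Lemma \ref{0l3.0} around $x_0$, let $s_p$ be (a cutoff of) the section whose local expression is $p^{n/2}\cP_{x_0}(\sqrt p\,Z,0)$ times the frame vector corresponding to $u_0\otimes(\text{the }\C\text{-component})$; multiply by $\rho(\,\cdot\,/\varepsilon)$ to make it globally defined. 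By \eqref{3ue62} and a Gaussian integral, $\norm{s_p}_{L^2}^2\to c>0$ (in fact one can normalize so that $\norm{s_p}_{L^2}=1+o(1)$), and by the off-diagonal expansion of Theorem \ref{tue17} (equivalently Lemma \ref{toet2.2}), $P_p s_p=s_p+\mathcal O(p^{-\infty})$ in $L^2$, i.e. $s_p$ is asymptotically in the range of $P_p$. Then $\langle T_{f,\,p}s_p,s_p\rangle=\langle f P_p s_p,P_p s_p\rangle=\langle f s_p,s_p\rangle+\mathcal O(p^{-\infty})$, and since $\abs{s_p}^2$ is an approximate Dirac mass at $x_0$ (the rescaled Gaussian $p^n\abs{\cP_{x_0}(\sqrt p\,Z,0)}^2\kappa_{x_0}(Z)$ integrates to $1$ and concentrates at $Z=0$ as $p\to\infty$, by \eqref{3ue62} and $\kappa_{x_0}(0)=1$), one gets $\langle f s_p,s_p\rangle\to\langle f(x_0)u_0,u_0\rangle_{h^E}$ up to the $\C$-component normalization. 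Choosing instead $v_0=f(x_0)(u_0)/\abs{f(x_0)(u_0)}$ as the target and testing $\langle T_{f,\,p}s_p,s_p'\rangle$ with $s_p'$ the analogous section attached to $v_0$ yields $\abs{\langle T_{f,\,p}s_p,s_p'\rangle}\to\abs{f(x_0)(u_0)}_{h^E}\geqslant\norm f_\infty-\delta$, whence $\norm{T_{f,\,p}}\geqslant\norm f_\infty-\delta-o(1)$. Letting $p\to\infty$ and then $\delta\to0$ finishes the argument.

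The main obstacle is the lower bound, and within it the bookkeeping that the locally defined rescaled-Gaussian test section is, after the cutoff, both (i) asymptotically normalized and (ii) asymptotically fixed by $P_p$; both follow from Theorem \ref{tue17} together with the explicit formula \eqref{3ue62} for $\cP$, plus the fact that the cutoff error and the off-diagonal decay are $\mathcal O(p^{-\infty})$ — so no genuinely new estimate is needed, only careful combination of the Gaussian concentration with the expansion \eqref{aue66}. One should also note that by Remark \ref{absyt1.2} it suffices to work with the $\bE^+$ component and, after the isometric identification, with the $\C\otimes E$ summand, so that $f$ really does act as $f(x_0)$ on $u_0$ in the limit; this identification is exactly the content of $J_{0,\,x_0}=I_{\C\otimes E}$ and \eqref{abk2.71}.
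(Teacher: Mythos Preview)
Your approach is essentially the paper's: test $T_{f,\,p}$ against a peak section concentrated at a point $x_0$ where $\norm{f}_\infty$ is (almost) attained, and use the off-diagonal expansion to control the error. One quantitative overstatement: from Theorem~\ref{tue17} you only get $P_p\,s_p=s_p+\mO(p^{-1/2})$ in $L^2$, not $\mO(p^{-\infty})$, since the model kernel $\cP$ matches $P_p$ only to leading order; this is harmless, as your argument needs merely $o(1)$. The paper streamlines the construction by taking $S^p_{x_0}=p^{-n/2}P_p(e_L^{\otimes p}\otimes u_0)$ directly, so the test section lies in the range of $P_p$ by definition and one estimates $\norm{T_{f,\,p}S^p_{x_0}-f(x_0)S^p_{x_0}}_{L^2}\leqslant C p^{-1/2}\norm{S^p_{x_0}}_{L^2}$ in a single stroke via \eqref{aue66}, avoiding the separate verification that $s_p$ is asymptotically fixed by $P_p$ and the pairing with a second section $s_p'$.
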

\begin{proof}
Take a point $x_0\in X$ and $u_0\in E_{x_0}$ with $|u_0|_{h^{E}}=1$ 
such that $|f(x_0)(u_0)|={\norm f}_\infty$.
Recall that in Section \ref{toes2}, we trivialize the bundles $L$, $E$ 
in our normal coordinates near $x_0$, 
and $e_L$ is the unit frame of $L$ which trivialize $L$. 
Moreover, in this normal coordinates, $u_0$ is a trivial section of $E$.
Considering the sequence of sections 
$S^p_{x_0}=p^{-n/2}P_p(e_L^{\otimes p}\otimes u_0)$, 
we have by \eqref{aue66},
\begin{equation}\label{toe4.18}
\norm{T_{f,\,p}\,S^p_{x_0}-f(x_0)S^p_{x_0}}_{L^2}
\leqslant \tfrac{C}{\sqrt{p}}\norm{S^p_{x_0}}_{L^2} .
\end{equation}
If $f$ is a real function, then $df(x_0)=0$, so we can improve 
the constant  $\tfrac{C}{\sqrt{p}}$ in \eqref{toe4.18} to $\tfrac{C}{p}$.
The proof of \eqref{toe4.17} is complete.
\end{proof}

\begin{rem}\label{toet4.3}
For $E=\C$, Theorem \ref{toet4.1} shows that we can associate to $f,g\in \cC^\infty(X)$
a formal power series $\sum_{l=0}^\infty \hbar^{l}C_l(f,g)\in\cC^\infty(X)[[\hbar]]$, 
where $C_l$ are  bidifferential operators. 
Therefore, we have constructed in a canonical way an associative star-product
$f*g=\sum_{l=0}^\infty \hbar^{l}C_l(f,g)$, called the \textbf{\emph{Berezin-Toeplitz star-product}}\/.
\end{rem}

\section{Berezin-Toeplitz quantizations on non-compact manifolds}\label{toes5}
        
In this Section we extend our results to non-compact manifolds. 
We consider for simplicity only complex 
manifolds, that is, we suppose that $(X, J)$ 
is a complex manifold with complex structure $J$ 
and $E$, $L$ are holomorphic vector bundles on $X$ with ${\rm rk}(L)=1$. 
We assume that $\nabla ^E$, $\nabla ^L$ are the holomorphic Hermitian 
(i.e. Chern) connections on $(E,h^E)$, $(L,h^L)$.
Let $g^{TX}$ be any Riemannian metric on $TX$ compatible with $J$. 
Since $g^{TX}$ is not necessarily K{\"a}hler, the endomorphism $\bJ$ 
defined in \eqref{to1.2} does not satisfy $\bJ\neq J$ in general.
Set 
\begin{equation} \label{toe5.03}
\Theta(X,Y)=  g^{TX}(JX,Y).
\end{equation}
Then the 2-form $\Theta$ need not be closed.

Let $\overline{\partial} ^{L^p\otimes E,*}$ be the formal adjoint of
the Dolbeault operator $\overline{\partial} ^{L^p\otimes E}$ 
on the Dolbeault complex $\Omega ^{0,\sbullet}(X, L^p\otimes E)$ 
with the Hermitian product 
induced by $g^{TX}$, $h^L$, $h^E$ as in \eqref{l2}. Set 
\begin{equation}\label{toe5.04}
\begin{split}
&D_p = \sqrt{2}\big(\, \overline{\partial} ^{L^p\otimes E}
+ \,\overline{\partial} ^{L^p\otimes E,*}\big)\,,\\
&\Box_p= 
\overline{\partial}^{L^p\otimes E}\,\overline{\partial}^{L^p\otimes E,*}
+\,\overline{\partial}^{L^p\otimes E,*}\,\overline{\partial}^{L^p\otimes E}.
\end{split}\end{equation}
Then $\Box_p$ is the Kodaira-Laplacian 
which preserves the $\Z$-grading of 
$\Omega ^{0,\sbullet}(X, L^p\otimes E)$ and
\begin{equation}\label{toe5.05}
D^2_p=2\Box_p.
\end{equation}
Note that $D_{p}$ is not a spin$^c$ Dirac operator on 
$\Omega ^{0,\sbullet}(X, L^p\otimes E)$.

The space of holomorphic sections of $L^p\otimes{E}$ which are $L^2$ 
with respect to the norm given by \eqref{l2} is denoted 
by $H^0_{(2)}(X,L^p\otimes{E})$. 
Let $P_p(x,x')$, $(x,x'\in X)$ be the Schwartz kernel of 
the orthogonal projection $P_p$, from the space of $L^2$ sections 
of $L^p\otimes{E}$ onto $H^0_{(2)}(X,L^p\otimes{E})$, 
with respect to the Riemannian volume form $dv_X(x')$
associated to $(X,g^{TX})$. 
Then $P_p(x,x')$ is smooth by the ellipticity of the Kodaira Laplacian 
and the Schwartz kernel theorem (cf. also \cite[Remark 1.3.3]{MM:07}).

\begin{rem} \label{toet5.6}
If $\bJ=J$, then $(X,J,\Theta)$ is K{\"a}hler and $D_p$ in \eqref{defDirac} 
and \eqref{toe5.04} coincide. Assume moreover $X$ is compact. Then
by the Kodaira vanishing theorem and the Dolbeault isomorphism we have 
\begin{equation}\label{toe5.06}
H^0(X,L^p\otimes E)=\ke (D_p)\,,
\end{equation} 
for $p$ large enough.
Thus if $(X,J,\Theta)$ is a compact K{\"a}hler manifold and $\bJ=J$, $E=\C$,
Theorems \ref{toet4.1}, \ref{toet4.2} recover the main results of 
Bordemann, Meinrenken and Schlichenmaier \cite{BMS,Schlich:00,KS01,Charles1}.
\end{rem}

We denote by $R^{\det}$ the curvature of the holomorphic Hermitian 
connection $\nabla^{\det}$ on $K_X^*=\det (T^{(1,0)}X)$.

For a $(1,1)$-form $\Omega$, we write $\Omega>0$ (resp. $\geqslant 0$) 
if $\Omega(\cdot, J\cdot)>0$ (resp. $\geqslant 0$).

The following result, obtained in \cite[Theorem\,3.11]{MM04a}, extends 
the asymptotic expansion of the Bergman kernel to non-compact manifolds.

\begin{thm} \label{noncompact}
Suppose that that $(X,g^{TX})$ is a complete Hermitian manifold 
and there exist $\varepsilon>0\,,\,C>0$ such that\,{\rm:} 
\begin{equation}\label{toe5.08}
\sqrt{-1}R^L >\varepsilon\Theta\,,
\quad\,\sqrt{-1}(R^{\det}+R^E)> -C\Theta \Id_E\,,\quad\,
|\partial \Theta|_{g^{TX}}< C,
\end{equation} 
then the kernel $P_p(x,x')$
has a full off--diagonal asymptotic expansion analogous to 
that of Theorem \ref{tue17} uniformly for any $x,x'\in K$, 
a compact set of $X$.
If $L=K_X:=\det(T^{*(1,0)}X)$ is the canonical line bundle on $X$, the first two conditions in (5.5) are to be replaced by    \begin{equation*}    \text{$h^L$ is induced by $\Theta$ and     $\sqrt{-1}R^{\det}<-\varepsilon\Theta$,   $\sqrt{-1}R^E> -C\Theta \Id_E$. }    \end{equation*}
\end{thm}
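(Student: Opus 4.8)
The plan is to follow the analytic localization method of \cite{MM02,DLM04a}, verifying at each step that the three uniform hypotheses in \eqref{toe5.08} permit the compact-case arguments to be carried over to the complete non-compact setting. Since $(X,g^{TX})$ is complete, the Kodaira Laplacian $\Box_p$, hence $D^2_p=2\Box_p$, is essentially self-adjoint on $\Omega^{0,\sbullet}_0(X,L^p\otimes E)$ by the Gaffney / Andreotti--Vesentini argument, so its functional calculus and the wave operator $\cos(tD_p)$ are well defined; in degree $0$ one has $\ke(D_p)=H^0_{(2)}(X,L^p\otimes E)$.

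First I would establish a spectral gap, the analogue of Theorem \ref{specDirac}. Using the identification $\Omega^{0,q}(X,L^p\otimes E)\cong\Omega^{n,q}(X,L^p\otimes E\otimes K_X^*)$ and the Bochner--Kodaira--Nakano--Demailly identity, one gets schematically
\[
2\Box_p=(\text{nonnegative term})+\big[\sqrt{-1}\big(pR^L+R^E+R^{\det}\big),\Lambda\big]+(\text{torsion terms}),
\]
where the torsion terms are controlled pointwise by $|\partial\Theta|_{g^{TX}}$. On forms of degree $q\geqslant1$ the commutator $[\sqrt{-1}R^L,\Lambda]$ is bounded below by a positive multiple of $\var$, since $\sqrt{-1}R^L>\var\Theta$; multiplying by $p$ and absorbing the remaining contributions --- $R^E$ and $R^{\det}$, bounded below by $-C\Theta$, and the torsion, bounded by $C$ --- one obtains
\[
\|D_ps\|^2_{L^2}\geqslant(\var'p-C')\|s\|^2_{L^2},\qquad s\in\Omega^{0,>0}_0(X,L^p\otimes E),
\]
with $\var',C'>0$ independent of $p$; the constants are uniform precisely because \eqref{toe5.08} is uniform. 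Hence $\spec(D^2_p)\subset\{0\}\cup[\var'p-C',\infty[$. For $L=K_X$ one runs the same scheme with the twisted complex $\Omega^{n,q}(X,K_X^{p-1}\otimes E)$, whose relevant curvature is $-(p-1)R^{\det}+R^E$: the hypotheses $\sqrt{-1}R^{\det}<-\var\Theta$, $\sqrt{-1}R^E>-C\Theta$ and the canonical metric $h^L$ induced by $\Theta$ then play the role of the first two conditions in \eqref{toe5.08} and give the same gap.

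Next I would localize. Taking the even Schwartz function $F$ of \eqref{0c3}, whose Fourier transform is supported in $[-\var,\var]$, finite propagation speed for $\cos(tD_p)$ --- valid because $X$ is complete --- shows that $F(D_p)(x,x')$ depends only on the geometry in the $\var$-balls around $x$ and $x'$, while the spectral gap yields $|F(D_p)(x,x')-P_p(x,x')|_{\cC^m}\leqslant C_{l,m}p^{-l}$ as in Proposition \ref{0t3.0}. Hence for $x,x'$ in the given compact set $K$ one may replace $X$ by a relatively compact neighborhood of $K$, and the problem reduces to the case already treated. The remaining steps then repeat \cite{DLM04a}: pass to normal coordinates around $x_0\in K$, rescale $Z\mapsto Z/\sqrt p$ and conjugate by $\kappa^{1/2}_{x_0}$, expand the rescaled operator $\cL^{x_0}_p$ as $\cL^0_2+\sum_{i\geqslant1}p^{-i/2}\boldsymbol{\mO}_i$ in the notation of \eqref{1c31}, prove weighted-Sobolev resolvent estimates for $(\lambda-\cL^{x_0}_p)^{-1}$ that are uniform in $x_0\in K$ and $p$, and read off the coefficients $\cP^{(r)}_{x_0}=J_{r,x_0}\cP$, with the stated parity and degree bounds, from the contour integral $P_p=\frac{1}{2\pi\sqrt{-1}}\oint(\lambda-\cL^{x_0}_p)^{-1}\,d\lambda$. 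All constants here depend only on finitely many derivatives of the metric and of the curvatures over a neighborhood of $K$, so the expansion is uniform for $x,x'\in K$.

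The main obstacle is the uniform spectral gap of the second step: one must check that every term of the Bochner--Kodaira--Nakano--Demailly identity --- in particular the non-K\"ahler torsion terms, which are absent in \cite{MM02} --- is dominated, with a constant independent of the base point, by a fixed multiple of $\Theta$ (times $p$ for the curvature part, times $1$ for the remainder), so that $\sqrt{-1}R^L>\var\Theta$ genuinely forces a gap growing linearly in $p$. This is exactly where all three uniform hypotheses of \eqref{toe5.08} enter. Once the gap is in place, the localization and rescaling are a routine though lengthy transcription of the compact case, the uniformity over $K$ being automatic because only the restriction of the geometric data to a neighborhood of $K$ is involved.
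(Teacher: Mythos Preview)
Your proposal is correct and follows exactly the approach the paper indicates: the paper does not give a self-contained proof of this theorem but cites \cite[Theorem 3.11]{MM04a} and remarks only that ``the idea of the proof is that \eqref{toe5.08} together with the Bochner--Kodaira--Nakano formula imply the existence of the spectral gap for $\Box_p$ acting on $L^2(X,L^p\otimes E)$ as in \eqref{diag5}.'' Your sketch --- essential self-adjointness from completeness, the uniform spectral gap via the Bochner--Kodaira--Nakano(--Demailly) identity using all three hypotheses of \eqref{toe5.08}, localization by finite propagation speed and the function $F$ of \eqref{0c3}, then the rescaling and resolvent analysis of \cite{DLM04a} --- is precisely the argument of \cite{MM04a} that the paper invokes, spelled out in considerably more detail than the paper itself provides.
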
 
The idea of the proof is that \eqref{toe5.08} together with 
the Bochner-Kodaira-Nakano formula imply the
existence of the spectral gap for $\Box_p$ acting on 
$L^2(X,L^p\otimes E)$ as in \eqref{diag5}.

Let $\cC^\infty_{const}(X, \End(E))$ denote the algebra of smooth sections 
of $X$ which are constant map outside a compact set.
For any $f\in\cC^\infty_{const}(X, \End(E))$, we consider the Toeplitz 
operator $(T_{f,\,p})_{p\in\N}$ as in \eqref{toe2.4}:
\begin{equation}\label{toe5.1}
T_{f,\,p}:L^2(X,L^p\otimes E)\longrightarrow L^2(X,L^p\otimes E)\,,
\quad T_{f,\,p}=P_p\,f\,P_p\,.
\end{equation} 
The following result generalizes Theorems \ref{toet4.1} and \ref{toet4.2}
to non-compact manifolds.  
\begin{thm}\label{toet5.1}
Assume that $(X,g^{TX})$ is a complete Hermitian manifold, $(L,h^L)$ and 
$(E,h^E)$ are holomorphic vector bundles satisfying 
the hypotheses of Theorem \ref{noncompact} with ${\rm rk}(L)=1$.
Let $f,g\in\cC^\infty_{const}(X, \End(E))$. Then the following assertions hold:

(i) The product of the two corresponding Toeplitz operators admits 
the asymptotic expansion \eqref{toe4.2} in the sense of \eqref{atoe2.2}, 
where $C_r$ are bidifferential operators, 
especially, $\supp (C_r(f,g))\subset \supp(f)$ $\cap \supp(g)$,
and $C_0(f,g)=fg$.

(ii) If $f,g\in\cC^\infty_{const}(X)$, then \eqref{toe4.4} holds.

(iii) Relation \eqref{toe4.17} also holds for any 
$f\in\cC^\infty_{const}(X, \End(E))$.
\end{thm}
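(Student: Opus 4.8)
The plan is to re-run the proofs of Theorems \ref{toet4.1} and \ref{toet4.2} essentially verbatim, the key point being that every ingredient used there is \emph{local}: the kernel calculus on $\C^n$ of Section \ref{toes1}, the localization of the Bergman kernel via the spectral gap, and the near-diagonal expansion of Theorem \ref{tue17}. The only new input is that, under the hypotheses of Theorem \ref{noncompact}, the Bergman kernel $P_p(x,x')$ of $\Box_p$ satisfies a far off-diagonal estimate analogous to Proposition \ref{0t3.0} and admits the near-diagonal expansion of Theorem \ref{tue17}, uniformly for $x,x'$ in any fixed compact subset of $X$; this is precisely the content of Theorem \ref{noncompact}, which rests on the spectral gap for $\Box_p$ forced by \eqref{toe5.08}.

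First I would localize. Let $K$ be a compact neighbourhood of the set where $f$ or $g$ fails to be locally constant; on $K$ the injectivity radius is bounded below and the above expansions are uniform. For $x$ lying outside a slightly larger compact set, the off-diagonal decay of $P_p$ (the analogue of Proposition \ref{0t3.0}) forces the kernels $T_{f,\,p}(x,x')$, $T_{g,\,p}(x,x')$ and $(T_{f,\,p}T_{g,\,p})(x,x')$ to agree, modulo $\cO(p^{-\infty})$, with explicit constant multiples of $P_p(x,x')$ and of $\int_X P_p(x,\cdot)P_p(\cdot,x')\,dv_X=P_p(x,x')$; hence the analogues of Lemmas \ref{toet2.1} and \ref{toet2.3} hold with all estimates uniform on $X$. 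In particular $T_{f,\,p}$ is bounded with $\norm{T_{f,\,p}}\leqslant{\norm f}_\infty$, and $P_pT_{f,\,p}T_{g,\,p}P_p=T_{f,\,p}T_{g,\,p}$.

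Next I would check that the criterion Theorem \ref{toet3.1}, together with its proof, applies unchanged. The near-diagonal expansion \eqref{toe4.6}--\eqref{toe4.7} of $T_{f,\,p}T_{g,\,p}$ is produced exactly as in \eqref{toe4.5}, with a cut-off supported in a ball of radius $<\var/4$ about each $x_0$; off $K$ it reduces to the corresponding statement for a constant multiple of $P_p$. The operator-norm bounds in the proof of Theorem \ref{toet3.1}, in particular Lemma \ref{toet3.4}, are Cauchy--Schwarz estimates over integrals on $X$ of kernels that are $\cO(p^{-\infty})$ off the diagonal and of Gaussian type (through $\cP$) near it; they are insensitive to the non-compactness of $X$, the contributions from the region where $f,g$ are locally constant being harmless. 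Running the induction of Theorem \ref{toet3.1} yields $g_l=C_l(f,g)\in\cC^\infty(X,\End(E))$, and the local formulas \eqref{toe2.14}, \eqref{toe4.7} exhibit $C_l$ as a bidifferential operator; being local it is support-decreasing, whence $\supp C_l(f,g)\subset\supp f\cap\supp g$, and $C_0(f,g)=fg$ as in \eqref{toe4.8}. For $f,g\in\cC^\infty_{const}(X)$ the computation of $C_1(f,g)-C_1(g,f)$ in \eqref{toe4.10}--\eqref{toe4.16} is entirely pointwise, giving \eqref{toe4.4}. This proves (i) and (ii).

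For (iii) I would repeat the proof of Theorem \ref{toet4.2}: since $f$ is locally constant outside a compact set, ${\norm f}_\infty$ is attained at some $x_0\in X$ together with a unit vector $u_0\in E_{x_0}$, and the peak sections $S^p_{x_0}=p^{-n/2}P_p(e_L^{\otimes p}\otimes u_0)$, together with the now available expansion \eqref{aue66} on a compact neighbourhood of $x_0$, give $\norm{T_{f,\,p}S^p_{x_0}-f(x_0)S^p_{x_0}}_{L^2}\leqslant Cp^{-1/2}\norm{S^p_{x_0}}_{L^2}$, hence $\liminf_{p}\norm{T_{f,\,p}}\geqslant{\norm f}_\infty$; combined with $\norm{T_{f,\,p}}\leqslant{\norm f}_\infty$ this yields \eqref{toe4.17}. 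The step requiring most care is the uniformity in the base point $x_0\in X$ of all constants entering Theorems \ref{toet3.1} and \ref{tue17}: this is not automatic since the geometry is uncontrolled at infinity, but it holds because the genuinely $x_0$-dependent analysis takes place only over the compact set $K$, where uniformity is classical, while over its complement the operators in question differ from explicit constant multiples of $P_p$ (or of products of $P_p$'s) by $\cO(p^{-\infty})$.
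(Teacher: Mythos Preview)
Your overall strategy is sound in spirit, but it diverges from the paper's proof at a key technical point, and there is a genuine gap in your handling of non-compactness.

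You propose to verify the hypotheses of Theorem \ref{toet3.1} (conditions (i)--(iii)) for $T_{f,\,p}T_{g,\,p}$ and then run its proof verbatim. The difficulty is that conditions (ii) and (iii), and the operator-norm estimates in Lemma \ref{toet3.4}, demand bounds that are \emph{uniform over all of $X\times X$}. Theorem \ref{noncompact}, however, delivers the near- and off-diagonal expansions of $P_p(x,x')$ only uniformly on \emph{compact} subsets $K\times K$; nothing in the hypotheses \eqref{toe5.08} controls the geometry at infinity well enough to promote these to global pointwise kernel bounds. Your final paragraph concedes this (``the step requiring most care is the uniformity in the base point $x_0$\ldots''), and your fix---that outside $K$ the operators differ from constant multiples of $P_p$ by $\cO(p^{-\infty})$---itself presupposes exactly the global off-diagonal decay of $P_p$ that is not available: to show that $\int_X P_p(x,x'')f(x'')P_p(x'',x')\,dv_X(x'')$ agrees with $c_fP_p(x,x')$ modulo $\cO(p^{-\infty})$ for $x$ far from $K$, you must control $P_p(x,x'')$ pointwise for $x''\in K$ and $x$ arbitrary, and the operator-norm statement $\|F(D_p)-P_p\|=\cO(p^{-\infty})$ does not yield this without uniform local Sobolev estimates at infinity.

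The paper's proof bypasses Theorem \ref{toet3.1} entirely. Its key device is the operator $F(D_p)$ from \eqref{0c3}: by finite propagation speed (which holds on complete manifolds), $F(D_p)(x,\cdot)$ is \emph{identically zero} outside $B^X(x,\var)$---a hard support condition, not mere decay. One first reduces to $f,g\in\cC^\infty_0(X,\End(E))$. Then, using $P_pF(D_p)=P_p$ on $\ker(D_p)$ and $\|F(D_p)-P_p\|=\cO(p^{-\infty})$, one writes $T_{f,\,p}T_{g,\,p}=P_p\bigl(F(D_p)fP_p\,gF(D_p)\bigr)P_p$, and the inner kernel $F(D_p)fP_p\,gF(D_p)$ is supported in $U\times U$ for a fixed relatively compact $U$. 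On $U\times U$ the local expansion of Theorem \ref{tue17} applies with uniform constants, and the $g_l=C_l(f,g)$ are read off directly from \eqref{toe4.7}. The operator-norm bound \eqref{toe5.9} then follows from the corresponding estimate \eqref{toe5.8} for the compactly supported kernel, without ever needing global pointwise kernel control. This is the missing idea in your proposal: use $F(D_p)$ to trade $P_p$ for an operator whose kernel is genuinely compactly supported, so that all the delicate analysis lives over a fixed compact set.

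Your treatment of (iii) is fine and matches the paper, since the peak-section argument is local near a single point $x_0$.
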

\begin{proof} 
The most important observation here is that the spectral gap property \eqref{diag5} and a similar argument as in 
Proposition \ref{0t3.0} deliver
\begin{equation} \label{toe5.3}
F(D_p)s = P_p\, s, \quad \|F(D_p)-P_p\|= \cO(p^{-\infty}),
\end{equation}
for $p$ large enough and each $s\in H^0_{(2)}(X, L^p\otimes E)$. 
Moreover, by the proof of Proposition \ref{0t3.0},
for any compact set $K$, and any $l,m\in \N$, $\var>0$,
there exists $C_{l,m,\var}>0$
such that  
\begin{equation}\label{toe5.4}
|F(D_p)(x,x') - P_p(x,x')|_{\cC^m(K \times K)} 
\leqslant C_{l,m,\var} \, p^{-l}\,,
\end{equation}
for $p\geqslant 1$, $x,x'\in K$.
By the finite propagation speed for solutions of hyperbolic equations
\cite[\S 2.8]{Tay1:96}, \cite[Appendix D.2]{MM:07}
 (cf. also  \cite[Proposition 4.1]{DLM04a}), $F(D_p)(x,\cdot)$ 
only depends on the restriction of $D_p$ to $B^X(x,\var)$ 
and is zero outside  $B^X(x,\var)$. 

For $g\in\cC^\infty_0(X, \End(E))$, let $(F(D_p)\, g\,F(D_p))(x,x')$ 
be the smooth kernel of $F(D_p)\, g\,F(D_p)$ with respect to $dv_X(x')$.
Then for any relative compact open set $U$ in $X$ such that 
$\supp(g)\subset U$, we have from \eqref{toe5.3} and \eqref{toe5.4},
\begin{align}\label{toe5.5}
\begin{split}
&T_{g,\,p}-F(D_p)\, g\,F(D_p)= \mO(p^{-\infty}),\\
&T_{g,\,p}(x,x')-(F(D_p)\, g\,F(D_p))(x,x')
= \cO(p^{-\infty}) \quad \text{ on } U\times U.
\end{split}\end{align}

Now we fix $f,g\in\cC^\infty_0(X, \End(E))$.
Let $U$ be relative compact open sets in $X$ such that
$\supp (f)\cup\, \supp (g)\subset U$ and 
$d(x,y)>2\var$ for any $x\in \supp (f)\cup \supp (g), y\in X\smallsetminus U$.
{}From \eqref{toe5.3}, we have 
\begin{align} \label{toe5.6}
T_{f,\,p}\,T_{g,\,p} = P_p  F(D_p)f P_p \, g\,  F(D_p)P_p . 
\end{align}

Let  $(F(D_p)f P_p \, g\,  F(D_p))(x,x^\prime)$,
be the smooth kernel of $F(D_p)f P_p \, g\,  F(D_p)$ with respect to 
$dv_X(x^\prime)$. Then the support of $(F(D_p)f P_p \, g\, F(D_p))(\cdot,\cdot)$
is contained in $U\times U$.
If we fix $x_0\in U$, it follows from \eqref{toe5.4} that the kernel 
of $F(D_p)f P_p \, g\,  F(D_p)$ has exactly 
the same asymptotic expansion as in the compact case. 
More precisely, as in \eqref{toe4.6}, we have 
\begin{multline} \label{toe5.7}
p^{-n}(F(D_p)f P_p\, g\,  F(D_p))_{x_0}(Z,Z^\prime)\\
\cong 
\sum^k_{r=0}(Q_{r,x_0}(f,g)\cP_{x_0})(\sqrt{p}Z,\sqrt{p}Z^{\prime})
p^{-\frac{r}{2}} + \mO(p^{-\frac{k+1}{2}}),
\end{multline}
with the same local formula for $Q_{r,\,x_0}(f,g)$ given in \eqref{toe4.7}.

But since all formal computations are local, 
$Q_{r,\,x_0}(f,g)$ are the same as in the compact case, i.e. polynomials with 
coefficients bidifferential operators acting on $f$ and $g$.

Thus we know from \eqref{toe5.5} that there exist
$(g_l)_{l\geqslant0}$, where $g_l\in \cC^\infty_0(X, \End(E))$,
$\supp (g_l)\subset\supp (f)\cap \supp (g)$ such that 
for any ${k}\geqslant 1$, $s\in L^2(X, E_p)$,
\begin{equation} \label{toe5.8}
\Big\|F(D_p)f P_p g  F(D_p) s
- \sum_{l=0}^k F(D_p) P_{p}\, g_l \, p^{-l}\, P_{p} F(D_p)s\Big\|_{L^2}
\leqslant \frac{C}{p^{k+1}}\|s\|_{L^2}.
\end{equation}
\eqref{toe5.6} and \eqref{toe5.8} imply that 
\begin{equation} \label{toe5.9}
\Big\|T_{f,\,p}T_{g,\,p} -\sum_{l=0}^k P_{p}\, g_l \, p^{-l}\, P_{p}\Big\|
\leqslant C_k\,p^{-k-1}.
\end{equation}
Therefore, (i) is proved. With the asymptotic expansion at hand, 
we have just to repeat the proofs given in the compact case in order to 
verify assertions (ii) and (iii). More precisely, (ii) follows exactly in 
the same way as in the proof of \eqref{toe4.4} given in Theorem \ref{toet4.1}. 
Finally, to derive assertion (iii), we apply verbatim the proof 
of Theorem \ref{toet4.2}. This completes the proof of Theorem \ref{toet5.1}.
\end{proof}

\begin{ex}
Theorem \ref{toet5.1} holds for every quasi-projective manifold 
with $L$ the restriction of the hyperplane line bundle associated to
some arbitrary projective embedding and $E$ the trivial bundle. 
By definition, a quasi-projective manifold $X$ has the form
$X=Y\smallsetminus Z$, where $Y$ and $Z$ are projective varieties, 
and $Z\subset Y$ contains the singular set of $Y$.
Let us consider a holomorphic embedding $Y\subset\C\proj^m$, 
the hyperplane line bundle $\cO(1)$ on $\C\proj^m$, and set $L=\cO(1)|_X$.

By Hironaka's theorem of resolution of singularities there exists
 a projective manifold $\widetilde{Y}$ and a holomorphic map
$\pi:\widetilde{Y}\longrightarrow Y$ (a composition of a finite succession 
of blow-ups with smooth centers) such that
$\pi:\widetilde{Y}\smallsetminus\pi^{-1}(Z)\longrightarrow Y\smallsetminus Z$ 
is biholomorphic and $\pi^{-1}(Z)$ is a divisor with normal crossings. 
  
In this situation it is shown in \cite[\S\,3.6]{MM04a}, \cite[\S\,6.2]{MM:07} 
that there exist a complete K{\"a}hler metric $g^{TX}$ on 
$\widetilde{Y}\smallsetminus\pi^{-1}(Z)\simeq X$, 
called the generalized Poincar{\'e} metric, and a metric 
$h^L$ on $\pi^*L\simeq L$ satisfying the hypotheses of 
Theorem \ref{noncompact} (with $E$ trivial).
\end{ex}

\begin{rem}\label{toet5.3}
It is appropriate to remark that the results of 
Sections \ref{toes2}-\ref{toes4} learn that
we can associate to any $f,g\in \cC^\infty(X,\End(E))$ a formal power series 
$\sum_{l=0}^\infty \hbar^{l}C_l(f,g)\in\cC^\infty(X,\End(E))[[\hbar]]$, 
where $C_l$ are bidifferential operators. 
This follows from the fact that the construction 
in Section \ref{toes4} is local. 
However, the problem we addressed in this section is which Hilbert space 
the Toeplitz operators act on in the case of a non-compact manifold.
Theorem \ref{toe5.1} shows that 
the space of holomorphic $L^2$-sections $H^0_{(2)}(X,L^p\otimes E)$ 
of $L^p\otimes E$, is a suitable Hilbert space which allows 
the Berezin-Toeplitz quantization of the algebra 
$\cC^\infty_{const}(X,\End(E))$.
\end{rem}

\section{Berezin-Toeplitz quantization on orbifolds} \label{pbs4}

In this Section we establish the theory of Berezin-Toeplitz quantization on symplectic orbifolds, especially 
we show that set of Toeplitz operators forms an algebra.
For convenience of exposition, we explain the results in detail in the K{\"a}hler orbifold case.
In \cite[\S 5.4]{MM:07} we find more complete explanations and references 
for Sections \ref{pbs4.1} and \ref{pbs4.2}.
For related topics about orbifolds we refer to \cite{ALR}.

This Section is organized as follows. 
In Section \ref{pbs4.1} we recall the basic definitions about orbifolds.
In Section \ref{pbs4.2} we explain the asymptotic expansion
of Bergman kernel on complex orbifolds \cite[\S 5.2]{DLM04a}, which we apply in
Section \ref{pbs4.3} to derive the Berezin-Toeplitz quantization on K{\"a}hler orbifolds. Finally, we 
state in Section \ref{pbs4.4} the corresponding version for symplectic orbifolds.

\subsection{Basic definitions on orbifolds} \label{pbs4.1}

We define at first a  category $\mathcal{M}_s$ as follows~: 
The objects of $\mathcal{M}_s$
are the class of pairs $(G,M)$ where $M$ is a connected smooth manifold
and $G$ is a finite group acting effectively on $M$
(i.e., if $g\in G$ such that $gx=x$ for any $x\in M$, 
then $g$ is the unit element of $G$). If $(G,M)$ and
 $(G',M')$ are two objects, then a morphism $\Phi: (G,M)\rightarrow
(G',M')$ is a family of open embeddings $\varphi: M\rightarrow M'$
satisfying~:

{i)} For  each $\varphi \in \Phi $, there is an injective group
 homomorphism
$\lambda_{\varphi}~: G\rightarrow G' $ that makes $\varphi$ be
$\lambda_{\varphi}$-equivariant.

{ii)} For $g\in G', \varphi \in \Phi $, we define
$g\varphi : M \rightarrow M'$
by $(g\varphi)(x) = g\varphi(x)$ for $x\in M$.
If $(g\varphi)(M) \cap \varphi(M) \neq \emptyset$,
 then $g\in  \lambda_{\varphi}(G)$.

{ iii)} For $\varphi \in \Phi $, we have $\Phi = \{g\varphi, g\in G'\}$.

\begin{defn}\label{pbt4.1} Let $X$ be a paracompact Hausdorff space.
An $m$-dimensional \emph{orbifold chart} on $X$ consists of a
connected open set $U$ of $X$,
an object $(G_U,\widetilde{U})$ of $\mathcal{M}_s$ with $\dim\widetilde{U}=m$,
and a ramified covering $\tau_U:\widetilde{U}\to U$ which is 
$G_U$-invariant and induces a homeomorphism
$U \simeq \widetilde{U}/G_U$. We denote the chart by 
$(G_U,\widetilde{U})\stackrel{\tau_U}{\longrightarrow}U$. 

An $m$-dimensional \emph{orbifold atlas} $\mathcal{V}$ on $X$
consists of a family of $m$-dimensional orbifold charts 
$\mathcal{V} (U)
=((G_U,\widetilde{U})\stackrel{\tau_U}{\longrightarrow} U)$ 
satisfying the following conditions\,:

{i)} The open sets $U\subset X$ form a covering $\mathcal{U}$ with
 the property:
\begin{equation}\label{pb4.1}
 \text{For any $U, U'\in \mathcal{U}$ and $x\in U\cap U'$,
there is  
$U''\in \mathcal{U}$ such that $x\in U''\subset U\cap U'$}.
\end{equation}

{ii)} for any $U, V\in \mathcal{U}, U\subset V$ there exists a morphism
$\varphi_{VU}:(G_U,\widetilde{U})\rightarrow (G_V,\widetilde{V})$, 
which covers the inclusion $U\subset V$ and satisfies 
$\varphi_{WU}=\varphi_{WV} \circ \varphi_{VU}$
for any $U,V,W\in \mathcal{U}$, with $U\subset V \subset W$. 
\end{defn}

It is easy to see that there exists a unique maximal orbifold atlas 
$\mathcal{V}_{max}$ containing $\mathcal{V}$;
$\mathcal{V}_{max}$ consists of all orbifold charts 
$(G_U,\widetilde{U})\stackrel{\tau_U}{\longrightarrow} U$,
which are locally isomorphic to charts from $\mathcal{V}$ in the neighborhood 
of each point of $U$. A maximal orbifold atlas $\mathcal{V}_{max}$ 
is called an \emph{orbifold structure} and the pair $(X,\mathcal{V}_{max})$ 
is called an orbifold. As usual, once we have an orbifold atlas 
$\mathcal{V}$ on $X$ we denote the orbifold by $(X,\mathcal{V})$, 
since $\mathcal{V}$ determines uniquely $\mathcal{V}_{max}$\,.

Note that if $\mathcal{U}^\prime$ is a refinement of $\mathcal{U}$ 
satisfying \eqref{pb4.1}, then there is an orbifold
atlas $\mathcal{V}^\prime$ such that $\mathcal{V} \cup \mathcal{V}^\prime $
is an orbifold atlas, hence 
$\mathcal{V} \cup \mathcal{V}^\prime \subset \mathcal{V}_{max}$.  
This shows that we may choose $\mathcal{U}$ arbitrarily fine.

Let $(X,\mathcal{V})$ be an orbifold. For each $x\in X$, we can choose a
small neighborhood $(G_x, \widetilde{U}_x)\to U_x$ such
that $x\in \widetilde{U}_x$ is a fixed point of $G_x$
(it follows from the definition that such a $G_x$ is unique up to 
isomorphisms for each $x\in X$). 
We denote by  $|G_x|$ the cardinal of $G_x$.
If $|G_x|=1$, then $X$ has a smooth manifold structure in the neighborhood 
of $x$, which is called a smooth point of $X$. 
If  $|G_x|>1$, then $X$ is not a smooth manifold in the neighborhood of $x$, 
which is called a singular point of $X$. We denote by
$X_{sing}= \{x\in X; |G_x|>1\}$ the singular set of $X$,
and $X_{reg}= \{x\in X; |G_x|=1\}$ the regular set of $X$.

It is useful to note that on an orbifold $(X,\mathcal{V})$ we can 
construct partitions of unity. First, let us call a function on $X$ smooth, 
if its lift to any chart of the orbifold atlas $\mathcal{V}$ is smooth 
in the usual sense. Then the definition and construction of a smooth 
partition of unity associated to a locally finite covering carries over 
easily from the manifold case. The point is to construct 
smooth $G_U$-invariant functions with compact support on
$(G_U,\widetilde{U})$.

In Definition \ref{pbt4.1} we can replace $\mathcal{M}_s$ by a category 
of manifolds with an additional structure such as orientation, 
Riemannian metric, almost-complex structure or complex structure.
We impose that the morphisms (and the groups) preserve the specified
structure. So we can define oriented, Riemannian,
almost-complex or complex orbifolds.

Let $(X,\mathcal{V})$ be an arbitrary orbifold. By the above definition, 
a \emph{Riemannian metric} on $X$ is a Riemannian metric $g^{TX}$
on $X_{reg}$ such that the lift of $g^{TX}$ to any chart of the orbifold 
atlas $\mathcal{V}$ can be extended to a smooth Riemannian metric. 
Certainly, for any $(G_U, \wi{U})\in \mathcal{V}$, we can always 
construct a $G_U$-invariant Riemannian metric on $\wi{U}$. 
By a partition of unity argument, we see that there exist Riemannian metrics 
on the orbifold $(X,\mathcal{V})$.

\begin{defn}\label{pbt4.4}
 An orbifold vector bundle $E$ over an
orbifold $(X,\mathcal{V})$ is defined as follows\,: $E$ is an orbifold
and for $U\in \mathcal{U}$, $(G_U^{E}, \widetilde{p}_U:
\widetilde{E}_U \rightarrow \widetilde{U})$ is
 a $G_U^{E}$-equivariant vector bundle and $(G_U^{E}, \widetilde{E}_U)$
(resp. $(G_U=G_U^{E}/K_U^{E}, \widetilde{U})$,
$K_U^{E}= \ke (G_U^{E}\rightarrow\mbox{{\rm Diffeo}} (\widetilde{U})))$
is the orbifold structure of $E$ (resp. $X$). 
If $G_U^E$ acts effectively on $\widetilde{U}$ for $U\in \mathcal{U}$, 
i.e. $K_U^{E} = \{ 1\}$, we call $E$ a proper orbifold vector bundle.
\end{defn}

Note that any structure on $X$ or $E$ is locally
$G_x$ or $G_{U_x}^{E}$-equivariant.

\begin{rem}\label{pbt4.5}
Let $E$ be an orbifold vector bundle on $(X,\mathcal{V})$. 
For $U\in \mathcal{U}$,
let $\wi{E ^{\pr}_U}$ be the maximal $K_U^{E}$-invariant sub-bundle of
 $\wi{E}_U$ on $\wi{U}$. Then $(G_U, \wi{E ^{\pr}_U})$  defines a 
proper orbifold vector bundle on $(X, \mathcal{V})$, denoted by $E ^{\pr}$.

The  (proper)  orbifold tangent bundle $TX$ on an orbifold $X$ is defined by 
$(G_U, T\widetilde{U} \rightarrow \widetilde{U})$, for $U\in \mathcal{U}$.
In the same vein we introduce the cotangent bundle $T^*X$.
We can form tensor products of bundles by taking the tensor
products of their local expressions in the charts of an orbifold atlas.
Note that a Riemannian metric on $X$ induces a section of $T^*X\otimes T^*X$ over $X$
which is a positive definite bilinear form on $T_xX$ at each point $x\in X$.
\end{rem}

Let $E \rightarrow X$ be an orbifold vector bundle and 
$k\in \N\cup\{\infty\}$. A section $s: X\rightarrow E$ is called ${\cC}^k$
 if for each $U\in \mathcal{U}$, $s|_{U}$ is covered by 
a $G_U^{E}$-invariant ${\cC}^k$
section $\widetilde{s}_U : \widetilde{U} \rightarrow \widetilde{E}_U$.
 We denote by $\cC^k(X,E)$  
the space of $\cC^k$ sections of $E$ on $X$. 

If $X$ is oriented, we define the integral
 $\int_{X} \alpha$ for a form $\alpha$
 over $X$ (i.e. a section of $ \Lambda  (T^*X)$ over $X$) as follows. 
If $\supp(\alpha) \subset U\in \mathcal U$ set
\begin{equation}\label{pb4.5}
\int_{X} \alpha: = \frac{1}{|G_U|} \int_{\widetilde{U}}
\widetilde{\alpha}_U.
\end{equation}
It is easy to see that the definition is independent of the chart. For general $\alpha$ we extend the definition by using a partition of unity.

If $X$ is an oriented Riemannian orbifold,
there exists a canonical volume element $dv_X$ on $X$, which is a section of 
$\Lambda^m(T^*X)$, $m=\dim X$. Hence, we can also integrate functions on $X$.

Assume now that the Riemannian orbifold $(X,\mathcal V)$ is compact. 
For $x,y\in X$, put
\begin{eqnarray}\begin{array}{l}
d(x,y) = \mbox{Inf}_\gamma  \Big \{ \sum_i \int_{t_{i-1}}^{t_i}
|\frac{\partial }{\partial t}\widetilde{\gamma}_i(t)| dt \Big |
 \gamma: [0,1] \to X, \gamma(0) =x, \gamma(1) = y,\\
 \hspace*{15mm}  \mbox{such that there exist }   t_0=0< t_1 < \cdots < t_k=1,
\gamma([t_{i-1}, t_i])\subset U_i,  \\
\hspace*{15mm}U_i \in \mathcal{U}, \mbox{ and a } \cC^{\infty}
\mbox{ map  } \widetilde{\gamma}_i: [t_{i-1}, t_i] \to \widetilde{U}_i
 \mbox{ that covers } \gamma|_{[t_{i-1}, t_i]}   \Big \}.
\end{array}\nonumber
\end{eqnarray}
Then $(X, d)$ is a metric space.
For $x\in X$, set $d(x,X_{sing}):=\inf_{y\in X_{sing}} d(x,y)$.

Let us discuss briefly kernels and operators on orbifolds.
For any open set $U\subset X$ and orbifold chart
$(G_U,\widetilde{U})\stackrel{\tau_U}{\longrightarrow} U$, 
we will add a superscript $\, \wi{}\,$ to indicate the corresponding
objects on $\widetilde{U}$.
Assume that  
$\wi{\mK}(\wi{x},\wi{x}^{\,\prime})\in \cC^\infty(\wi{U} \times \wi{U},
\pi_1^* \wi{E}\otimes \pi_2^* \wi{E}^*)$
verifies 
\begin{align}\label{pb4.3}
(g,1)\wi{\mK}(g^{-1}\wi{x},\wi{x}^{\,\prime})
=(1,g^{-1})\wi{\mK}(\wi{x},g\wi{x}^{\,\prime})
\quad \text{ for any $g\in G_{U}$,}
\end{align}
where $(g_1,g_2)$ acts on $\wi{E}_{\wi{x}}\times \wi{E}_{\wi{x}^{\,\prime}}^*$ 
by $(g_1,g_2)(\xi_1,\xi_2)= (g_1\xi_1,g_2 \xi_2)$. 

We define the operator $\wi{\mK}: \cC^\infty_0(\wi{U}, \wi{E})
\to \cC^\infty(\wi{U}, \wi{E})$ by
\begin{equation}\label{pb4.4}
(\wi{\mK}\, \wi{s}) (\wi{x})= \int_{\wi{U}} \wi{\mK}(\wi{x},\wi{x}^{\,\prime}) 
 \wi{s}(\wi{x}^{\,\prime}) dv_{\wi{U}} (\wi{x}^{\,\prime}) 
\quad\text{for $\wi{s} \in \cC^\infty_0(\wi{U}, \wi{E})$\,.}
\end{equation}
For $\wi{s} \in \cC^\infty(\wi{U}, \wi{E})$ and $g\in G_U$,
$g$ acts on $\cC^\infty(\wi{U}, \wi{E})$ by: 
$(g \cdot \wi{s})(\wi{x}):=g \cdot \wi{s}(g^{-1}\wi{x})$. We can then identify 
an element $s \in \cC^\infty({U}, {E})$ 
with an element $\wi{s} \in \cC^\infty(\wi{U}, \wi{E})$ 
verifying $g\cdot \wi{s}=\wi{s}$ for any $g\in G_U$.

With this identification, we define the operator 
$\mK: \cC^\infty_0(U,{E})\to \cC^\infty({U},{E})$ by 
\begin{equation}\label{pb4.6}
({\mK} s)(x)=\frac{1}{|G_U|}\int_{\wi{U}} \wi{\mK}(\wi{x},\wi{x}^{\,\prime}) 
 \wi{s}(\wi{x}^{\,\prime}) dv_{\wi{U}} (\wi{x}^{\,\prime}) 
\quad\text{for $s \in \cC^\infty_0({U},{E})$\,,}
\end{equation}
where $\wi{x}\in\tau^{-1}_U(x)$.
Then the smooth kernel $\mK(x,x^\prime)$ of the operator $\mK$ 
with respect to $dv_X$ is
\begin{align}\label{pb4.7}
\mK(x,x^\prime)= \sum_{g\in G_U} (g,1)\wi{\mK}(g^{-1}\wi{x},\wi{x}^{\,\prime}).
\end{align}
Indeed, if $s \in \cC^\infty_0({U}, {E})$,
by \eqref{pb4.3} and \eqref{pb4.6}, we have
 \begin{equation}\label{pb4.8}
 \begin{split}
 (\mK s)(x) &= \frac{1}{|G_{U}|} \sum_{g\in G_{U}} 
\int_{\wi{U}}  \wi{\mK} (\wi{x},\wi{x}^{\,\prime})
g \cdot \wi{s}(g^{-1}\wi{x}^{\,\prime})(\wi{x}^{\,\prime}) 
dv_{\wi{U}}(\wi{x}^{\,\prime})\\
 &=  \frac{1}{|G_{U}|} \sum_{g\in G_{U}} 
\int_{\wi{U}} (g,1) \wi{\mK} (g^{-1}\wi{x},\wi{x}^{\,\prime})
s(\wi{x}^{\,\prime}) dv_{\wi{U}}(\wi{x}^{\,\prime})\\
 &=  \int_{U} \sum_{g\in G_{U}}
(g,1)\wi{\mK} (g^{-1}\wi{x},\wi{x}^{\,\prime}) 
s(x^\prime) dv_{X}(x^\prime). 
\end{split}
\end{equation}

Let $\mK_1,\mK_2$ be two operators as above and assume that the kernel of 
one of $\wi{\mK}_1, \wi{\mK}_2$ has compact support.
By \eqref{pb4.5}, \eqref{pb4.3} and \eqref{pb4.6}, 
the kernel of $\mK_1\circ \mK_2$ is given by
\begin{equation}\label{pb4.9}
(\mK_1\circ \mK_2)(x,x^\prime)= \sum_{g\in G_U} 
(g,1)(\wi{\mK}_1\circ \wi{\mK}_2)(g^{-1}\wi{x},\wi{x}^{\,\prime}).
\end{equation}

\subsection{Bergman kernel on K{\"a}hler orbifolds} \label{pbs4.2}

Let $X$ be a compact complex orbifold of complex dimension $n$
with complex structure $J$.
Let $E$ be a holomorphic orbifold vector bundle on $X$.

Let $\cO_X$ be the sheaf over $X$ of local $G_U$-invariant holomorphic
functions over $\widetilde{U}$, for $U\in {\mathcal U}$.
The local $G^{E}_U$ -invariant holomorphic sections of
$\widetilde{E} \rightarrow \widetilde{U}$ define 
a sheaf $\cO_X(E)$ over $X$.
Let $H^\bullet(X, \cO_X(E)$ be the cohomology of the sheaf
$\cO_X(E)$ over $X$.

Notice that by Definition, we have
\begin{equation}\label{pb4.10}
\cO_X(E)=\cO_X(E ^{\pr}).
\end{equation}
Thus without lost generality, we may and will assume that 
$E$ is a proper orbifold vector bundle on $X$.

Consider a section $s\in\cC^\infty(X,E)$ and a local section 
$\wi{s}\in\cC^\infty(\wi{U},\wi{E}_U)$ covering $s$.
Then $\db^{\wi{E}_U}\wi{s}$ covers a section of $T^{*(0,1)}X\otimes E$ 
over $U$, denoted $\db^Es|_U$.
The family of sections $\{\db^Es|_U\,:\,U\in\mathcal{U}\}$ patch together 
to define a global section $\db^Es$ of $T^{*(0,1)}X\otimes E$ over $X$. 
In a similar manner we define $\overline\partial^E\alpha$ for a 
$\cC^{\infty}$ section $\alpha$ of $\Lambda (T^{*(0,1)}X) \otimes E$ over $X$. 
We obtain thus the Dolbeault complex 
($\Omega^{0,\bullet}(X,E), \overline{\partial}^E$)\,:
\begin{equation}\label{pb4.12}
0 \longrightarrow \Omega^{0,0}(X,E)
\stackrel{\overline{\partial}^E}{\longrightarrow} \cdots
\stackrel{\overline{\partial}^E}{\longrightarrow} \Omega^{0,n}(X,E)
\longrightarrow 0 .
\end{equation}
{}From the abstract de Rham theorem there exists a canonical isomorphism
\begin{eqnarray}\label{pb4.13}
H^\bullet(\Omega^{0,\bullet}(X,E), \overline{\partial}^E) \simeq
H^\bullet(X,\cO_X(E)).
\end{eqnarray}
In the sequel, we also denote  $H^\bullet(X,\cO_X(E))$ 
by $H^\bullet(X,E)$. 

We consider a complex orbifold $(X,J)$ endowed with the complex structure $J$. 
Let $g^{TX}$ be a Riemannian metric on $TX$ compatible with $J$.
There is then an associated $(1,1)$-form $\Theta$ given by 
$\Theta(U,V)=g^{TX}(JU,V)$. The metric $g^{TX}$ is called a K{\"a}hler metric 
and the orbifold $(X,J)$ is called a \textbf{\emph{K{\"a}hler orbifold}} 
if $\Theta$ is a closed form, that is, $d\Theta=0$.
In this case $\Theta$ is a symplectic form, called K{\"a}hler form. We will 
denote the K{\"a}hler orbifold by $(X,J,\Theta)$ or shortly by $(X,\Theta)$.

Let $(L,h^L)$ be a holomorphic Hermitian proper orbifold line bundle 
on an orbifold $X$, and let $(E,h^E)$ be a holomorphic Hermitian proper 
orbifold vector bundle on $X$.

We assume that the associated curvature $R^L$ of $(L,h^L)$ verifies 
\eqref{bk1.1}, i.e., $(L,h^L)$ is a positive proper orbifold line bundle 
on $X$. This implies that $\om:=\frac{\sqrt{-1}}{\pi}R^L$ is 
a K{\"a}hler form on $X$, $(X,\om)$ is a K{\"a}hler orbifold and 
$(L,h^L,\nabla^L)$ is a prequantum line bundle on $(X,\om)$.

Note that the existence of a positive line bundle $L$ on 
a compact complex orbifold $X$ implies that the Kodaira map
associated to high powers of $L$ gives a holomorphic embedding of $X$ 
in the projective space. This is the generalization due to Baily of 
the Kodaira embedding theorem (see e.g. \cite[Theorem\,5.4.20]{MM:07}).

Let $g^{TX}=\om(\cdot,J\cdot)$ be the Riemannian metric on $X$
induced by $\om=\frac{\sqrt{-1}}{2\pi}R^L$.

Using the Hermitian product along the fibers of $L^p$, $E$, 
$\Lambda(T^{*(0,1)}X)$, the Riemannian volume form $dv_X$ and 
the definition \eqref{pb4.5} of the integral on an orbifold, we introduce 
an $L^2$-Hermitian product on $\Omega^{0,\bullet}(X,L^p\otimes E)$ 
similar to \eqref{l2}. This allows to define the formal adjoint 
$\overline{\partial}^{L^p\otimes E,*}$ of $\overline{\partial}^{L^p\otimes E}$ 
and as in \eqref{toe5.04}, the operators $D_p$ and $\square_p$\,.
Then $D_p^2$ preserves the $\Z$-grading of 
$\Omega^{0,\bullet} (X,L^p\otimes E)$.
We note that Hodge theory extends to compact orbifolds and delivers a canonical isomorphism
\begin{equation}\label{pb4.16}
H^{q}(X,L^p\otimes E)\simeq \ke(D_p^2|_{\Omega^{0,q}}).
\end{equation}

By the same proof as in \cite[Theorems\,1.1,\,2.5]{MM02}, 
\cite[Theorem 1]{BVa89},
we get vanishing results and the spectral gap property.
\begin{thm}\label{pbt4.11} 
Let $(X,\om)$ be a compact K{\"a}hler orbifold, 
$(L,h^L)$ be a prequantum holomorphic Hermitian proper orbifold line bundle 
on $(X,\om)$ and $(E,h^E)$ be an arbitrary holomorphic Hermitian 
proper orbifold vector bundle on $X$.

Then there exists $C>0$ such that the Dirac operator $D_p$ satisfies 
for any $p\in \N$ 
\begin{equation}\label{pb4.17}
\spec(D_{p}^2)\subset \{0\}\, \cup\, ]4\pi p-C,+\infty[,
\end{equation}
 and $D_{p}^2|_{\Omega^{0, >0}}$ is invertible for $p$ large enough.
Consequently, we have the Kodaira-Serre vanishing theorem, namely, 
for $p$ large enough,
\begin{equation}\label{pb4.18}
H^{q}(X,L^p\otimes E)= 0\,, \quad \text{\rm for every $q>0$.}
\end{equation}
\end{thm}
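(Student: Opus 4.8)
The plan is to reduce everything to a local, $G_U$-equivariant computation on the orbifold charts and to run the Lichnerowicz (Bochner--Kodaira--Nakano) argument of Theorem \ref{specDirac}, exactly as in \cite{MM02,BVa89} (cf. also \cite[\S 1.5, \S 5.4]{MM:07}). By \eqref{pb4.10} we may assume that $L$ and $E$ are proper orbifold bundles, so $D_p$ and $\Box_p$ in \eqref{toe5.04}, \eqref{toe5.05} are honest operators on $\Omega^{0,\sbullet}(X,L^p\otimes E)$. On a chart $(G_U,\wi U)\stackrel{\tau_U}{\to}U$ the operator $D_p$ lifts to a $G_U$-invariant operator $\wi D_p=\sqrt2(\db^{\wi E_p}+\db^{\wi E_p,*})$ on the smooth manifold $\wi U$, which is the usual spin$^c$ Dirac operator associated to the lifted data; hence the Lichnerowicz formula $\wi D_p^2=\wi\Delta^{E_p}+\tfrac14\wi r_X+\mathbf{c}(\wi R^{L^p\otimes E})+\cdots$ holds verbatim on $\wi U$. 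The only $p$-growing term is $p\,\mathbf{c}(\wi R^{L})$, and by the positivity assumption \eqref{bk1.1} its restriction to $\Lambda^{0,q}(T^{*(0,1)}X)$ is a bundle endomorphism bounded below by $2p\mu_0-C_0$ for every $q\geqslant1$, where $\mu_0$ is as in \eqref{to1.13}; since here $g^{TX}=\om(\cdot,J\cdot)$ with $\om=\tfrac{\sqrt{-1}}{2\pi}R^L$, one has $\mu_0=2\pi$, so the bound is $4\pi p-C_0$. This is a pointwise estimate, so it descends to $X$.

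Next I would derive the global $L^2$ estimate. Because $D_p$ and $\nabla^{E_p}$ are globally defined first-order operators on the compact orbifold $X$ and $dv_X$ is the canonical volume form, integration by parts is valid on $X$ (work chart by chart and glue with a $G_U$-invariant partition of unity, cf. \eqref{pb4.5}, \eqref{pb4.6}). Therefore, for every $s\in\Omega^{0,>0}(X,L^p\otimes E)$,
\begin{equation*}
\norm{D_ps}_{L^2}^2=\norm{\nabla^{E_p}s}_{L^2}^2+\int_X\big\langle\mathbf{c}(R^{L^p\otimes E})s,s\big\rangle\,dv_X+\cdots\geqslant(4\pi p-C)\norm{s}_{L^2}^2 .
\end{equation*}
Since $D_p$ is formally self-adjoint and elliptic on the compact orbifold $X$, elliptic theory on orbifolds (chart by chart) gives an essentially self-adjoint closure with discrete spectrum, and $D_p^2$ preserves the $\Z$-grading, so $\spec(D_p^2)=\bigcup_q\spec(D_p^2|_{\Omega^{0,q}})$. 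The estimate above yields $\spec(D_p^2|_{\Omega^{0,q}})\subset[4\pi p-C,+\infty[$ for $q>0$. For $q=0$ one has $D_p^2|_{\Omega^{0,0}}=2\db^*\db$; if $\lambda>0$ is an eigenvalue with eigensection $s$, then $t:=\db s\neq0$ lies in $\Omega^{0,1}$, satisfies $\db t=0$, and $D_p^2 t=2\db\db^*t=\lambda t$, so $\lambda\geqslant4\pi p-C$. Hence $\spec(D_p^2)\subset\{0\}\cup[4\pi p-C,+\infty[$, which (after enlarging $C$) is \eqref{pb4.17}; and for $p>C/(4\pi)$ the operator $D_p^2|_{\Omega^{0,>0}}$ has spectrum contained in $]0,+\infty[$, hence is invertible.

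Finally, Hodge theory extends to compact orbifolds by the same local elliptic analysis, giving the isomorphism \eqref{pb4.16}, namely $H^q(X,L^p\otimes E)\cong\ke(D_p^2|_{\Omega^{0,q}})$. For $q>0$ and $p$ large the right-hand side vanishes by the invertibility just proved, so $H^q(X,L^p\otimes E)=0$, which is \eqref{pb4.18}.

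The substantive content is the curvature lower bound on $\mathbf{c}(R^L)|_{\Lambda^{0,q}}$, and that is identical to the manifold case because $R^L$, $g^{TX}$ and $J$ lift locally. The only point requiring care is the bookkeeping for orbifold analysis---essential self-adjointness and discreteness of $\spec(D_p^2)$, elliptic regularity and the Hodge decomposition on orbifolds, and the legitimacy of integration by parts against $dv_X$ via \eqref{pb4.5}; all of these follow by passing to $G_U$-equivariant charts and patching with $G_U$-invariant partitions of unity, as in \cite{MM02,BVa89}, \cite[\S 1.5]{MM:07}.
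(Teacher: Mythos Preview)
Your proposal is correct and follows exactly the approach the paper indicates: the paper gives no detailed proof, merely the sentence ``By the same proof as in \cite[Theorems\,1.1,\,2.5]{MM02}, \cite[Theorem 1]{BVa89}, we get vanishing results and the spectral gap property,'' and you have faithfully fleshed out that argument (Lichnerowicz formula on $G_U$-equivariant charts, pointwise curvature lower bound with $\mu_0=2\pi$ since $\bJ=J$, integration by parts on the orbifold, and the standard passage from the $q>0$ estimate to the full spectral gap). Your verification that $\mu_0=2\pi$ in the K{\"a}hler setting with $g^{TX}=\om(\cdot,J\cdot)$, yielding the constant $4\pi p$ in \eqref{pb4.17} rather than the generic $2\mu_0 p$ of Theorem~\ref{specDirac}, is the one point worth making explicit, and you handled it correctly.
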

\noindent
In view of Theorem \ref{pbt4.11} and of the isomorphism \eqref{pb4.16}, 
we can define for $p>C(2\pi)^{-1}$ the Bergman kernel 
$$P_p(\cdot,\cdot)\in\cC^\infty(X\times X,
\pi_1^*(L^p\otimes E)\otimes \pi_2^*((L^p\otimes E)^*))$$ 
like in Definition \ref{defBergman}. Namely, the Bergman kernel is 
the smooth kernel with respect to the Riemannian volume form $dv_X(x')$ of 
the orthogonal projection (Bergman projection) $P_p$ from 
$\cC^\infty(X, L^p\otimes E)$ onto $H^0(X,L^p\otimes E)$.

{}From now on, we assume  $p>C(2\pi)^{-1}$.
Let $d_p = \dim H^0(X,L^p\otimes E)$ and consider an arbitrary orthonormal 
basis $\{S^p_i\}_{i=1}^{d_p}$ of $H^0(X,L^p\otimes E)$ with respect to 
the Hermitian product \eqref{l2} and \eqref{pb4.5}. 
In fact, in the local coordinate above, $\widetilde{S}^p_i(\widetilde{z})$ 
are $G_x$-invariant on $\widetilde{U}_x$, and
\begin{align}\label{pb4.19}
P_p (y,y') &= \sum_{i=1}^{d_p}  \widetilde{S}^p_i ( \widetilde{y})
 \otimes (\widetilde{S}^p_i(\widetilde{y}^\prime))^*,
\end{align}
where we use $\widetilde{y}$ to denote the point in $\widetilde{U}_x$
representing $y\in {U}_x$. 

The spectral gap property \eqref{pb4.17} shows that we have the
analogue of Proposition \ref{0t3.0}, with the same $F$ as given in \eqref{0c3}:
\be\label{pb4.21}
|P_p(x,x') -  F(D_p)(x,x')|_{\cC^m(X\times X)}
\leqslant C_{l,m,\var} p^{-l}.
\ee

As pointed out in \cite{Ma05}, the property of the finite propagation speed
of solutions of hyperbolic equations still holds on an orbifold 
(see the proof in \cite[Appendix D.2]{MM:07}).
Thus $F(D_p)(x,x')= 0$ for every for $x,x'\in X$ satisfying 
$d(x, x')\geqslant \var$. 
Likewise, given $x\in X$, $F(D_p)(x,\cdot)$ only depends on the restriction 
of $D_p$ to $B^X(x,\var)$. Thus the problem of the asymptotic expansion 
of $P_p(x,\cdot)$ is local.

We recall that for every open set $U\subset X$ and orbifold chart
$(G_U,\widetilde{U})\stackrel{\tau_U}{\longrightarrow} U$,
we add a superscript $\, \wi{}\,$ to indicate the corresponding
objects on $\widetilde{U}$. 
Let $\partial U=\ov{U}\setminus U$, $U_1=\{ x\in U, d(x,\partial U)<\var\}$. 
Then $F(\wi{D}_p)(\wi{x},\wi{x}^{\,\prime})$  is well defined for 
$\wi{x},\wi{x}^{\,\prime}\in \wi{U}_1=\tau_U^{-1}(U_1)$. Since 
$g\cdot F(\wi{D}_p)=F(\wi{D}_p) g$, we get
\begin{equation}\label{pb4.22a}
(g, 1) F(\wi{D}_p)( g^{-1} \wi{x},\wi{x}^\prime)
= (1, g^{-1}) F(\wi{D}_p)(\wi{x}, g\wi{x}^\prime)\,,
\end{equation}
for every $g\in G_U$, 
$\wi{x},\wi{x}^{\,\prime}\in \wi{U}_1$. 
Formula \eqref{pb4.7} shows that for every $x,x^\prime\in U_1$ 
and $\wi{x},\wi{x}^{\,\prime}\in \wi{U}_1$ representing $x,x^\prime$, we have
\begin{equation}\label{pb4.22}
F(D_p)(x,x^\prime) = \sum_{g\in G_{U}}
(g, 1) F(\wi{D}_p)( g^{-1}\wi{x},\wi{x}^{\,\prime}).
\end{equation}

For $\wi{x}_0\in \wi{U}_2 :=\{ x\in \wi{U}, d(x,\partial \wi{U})<2\var\}$, 
and $\wi{Z},\wi{Z}^\prime \in T_{\wi{x}_0}X$ with 
$|\wi{Z}|,|\wi{Z}^\prime|\leqslant \var$, 
the kernel $F(\wi{D}_p)(\wi{Z},\wi{Z}^\prime)$ has an asymptotic expansion as 
in Theorem \ref{tue17} by the same argument as in Proposition \ref{0t3.0}. 
In the present situation $\bJ=J$, so that $a_j=2\pi$ 
and the kernel $\cP$ defined in \eqref{3ue62} takes the form
\begin{equation}\label{pb4.25}
\cP(\wi{Z},\wi{Z}^\prime) =\exp\Big(-\frac{\pi}{2}\sum_i\big(|\wi{z}_i|^2
+|\wi{z}^{\prime}_i|^2 -2\wi{z}_i\wi{\overline{z}}_i^\prime\big)\Big)\,.
\end{equation}

\subsection{Berezin-Toeplitz quantization on K{\"a}hler orbifolds} \label{pbs4.3}

We apply now the results of Section \ref{pbs4.2} to establish 
the Berezin-Toeplitz quantization on K{\"a}hler orbifolds.
We use the notations and assumptions of that Section.

Since we consider the holomorphic case, we denote directly by $P_p$ 
the orthogonal projection
from $\cC^\infty(X, L^p\otimes E)$ onto $H^0(X,L^p\otimes E)$ and
we replace in \eqref{toe2.1} the space $L^2(X,E_p)$ with $L^2(X,L^p\otimes E)$.
Thus we have the following definition.

\begin{defn}\label{toet6.0}
A \textbf{\em Toeplitz operator}
is a family $\{T_p\}$ of linear operators
\begin{equation}\label{toe6.1}
T_{p}:L^2(X, L^p\otimes E)\longrightarrow L^2(X, L^p\otimes E)\,,
\end{equation} 
verifying \eqref{toe2.2} and \eqref{toe2.3}.
\end{defn}

For any section $f\in \cC^{\infty}(X,\End(E))$,
  the  \textbf{\em Berezin-Toeplitz quantization} of $f$ is defined by
\begin{equation}\label{toe6.3}
T_{f,\,p}:L^2(X,L^p\otimes E)\longrightarrow L^2(X,L^p\otimes E)\,,
\quad T_{f,\,p}=P_p\,f\,P_p\,.
\end{equation} 

Now, by the same argument as in Lemma \ref{toet2.1}, we get
\begin{lemma} \label{toet6.1}
For any $\varepsilon>0$ and any $l,m\in\N$ there exists $C_{l,m,{\varepsilon}}>0$ such that 
\begin{equation} \label{toe6.4}
|T_{f,\,p}(x,x')|_{\cC^m(X\times X)}\leqslant C_{l,m,{\varepsilon}}p^{-l}
\end{equation}
for all $p\geqslant 1$ and all $(x,x')\in X\times X$ 
with $d(x,x')>\varepsilon$, where the $\cC^m$-norm is induced by $\nabla^L,\nabla^E$ and $h^L,h^E,g^{TX}$.
\end{lemma}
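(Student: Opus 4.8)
The plan is to repeat the (short) proof of Lemma \ref{toet2.1} essentially verbatim, checking along the way that each of the three ingredients it uses is available in the orbifold setting set up in Section \ref{pbs4.2}.

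The first ingredient is the far off-diagonal decay of the Bergman kernel, i.e. the orbifold analogue of \eqref{1c3}: this is exactly \eqref{pb4.21} combined with the vanishing $F(D_p)(x,x')=0$ for $d(x,x')\geqslant\var$ (finite propagation speed on orbifolds), which together yield, for all $l,m\in\N$ and $\var>0$, a constant $C_{l,m,\var}>0$ with $|P_p(x,x')|_{\cC^m(X\times X)}\leqslant C_{l,m,\var}\,p^{-l}$ whenever $d(x,x')>\var$. The second ingredient is a global polynomial bound $|P_p(x,x')|_{\cC^m(X\times X)}\leqslant C_m\,p^{M_m}$, valid for all $(x,x')\in X\times X$ and all $p\geqslant1$; this follows from $P_p=F(D_p)+\cO(p^{-\infty})$ together with the local description \eqref{pb4.22}--\eqref{pb4.25} of $F(D_p)$ near the diagonal, in which each summand $F(\widetilde D_p)(g^{-1}\widetilde x,\widetilde x')$ carries the off-diagonal expansion of Theorem \ref{tue17} applied on a chart (and $F(D_p)$ vanishes away from the diagonal). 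The third ingredient is the integral representation of the Schwartz kernel of a composition of smoothing operators: since $dv_X$ is defined via \eqref{pb4.5} and kernels on $X$ transform according to \eqref{pb4.3}, one has, exactly as on a manifold and compatibly with \eqref{pb4.9},
\begin{equation*}
T_{f,\,p}(x,x')=\int_X P_p(x,x'')\,f(x'')\,P_p(x'',x')\,dv_X(x'')\,,
\end{equation*}
which is the orbifold substitute for \eqref{toe2.5}.

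With these at hand the argument is identical to that of Lemma \ref{toet2.1}. Fix $(x,x')$ with $d(x,x')>\var$. For every $x''\in X$ the triangle inequality in $(X,d)$ forces $d(x,x'')>\var/2$ or $d(x'',x')>\var/2$; splitting $\int_X$ into the two corresponding regions, on each region one of the two Bergman factors is $\cO(p^{-\infty})$ in every $\cC^m$-norm by the first ingredient while the other factor is polynomially bounded in $p$ by the second, and $f$ is bounded. As the $x,x'$-derivatives only hit the two $P_p$-factors and $X$ is compact, integration yields $|T_{f,\,p}(x,x')|_{\cC^m(X\times X)}\leqslant C_{l,m,\var}\,p^{-l}$, which is \eqref{toe6.4}.

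The only point that demands some attention is the uniformity, over all base points and in particular near the singular locus $X_{sing}$, of the polynomial bound on $P_p$ coming from the local expansion \eqref{pb4.22}--\eqref{pb4.25} — that the implicit constants there do not degenerate at singular points. This is guaranteed by the uniformity already built into Theorem \ref{tue17} and into \eqref{pb4.21}; granting it, everything else is the manifold proof of Lemma \ref{toet2.1}.
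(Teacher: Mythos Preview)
Your proof is correct and follows exactly the approach indicated by the paper, which simply states that Lemma \ref{toet6.1} follows ``by the same argument as in Lemma \ref{toet2.1}.'' You have spelled out in detail how each of the three ingredients of that argument (off-diagonal decay of $P_p$, global polynomial bound on $P_p$, and the kernel formula for $T_{f,\,p}$) carries over to the orbifold setting via \eqref{pb4.21}, \eqref{pb4.22}--\eqref{pb4.25}, and the kernel calculus of Section \ref{pbs4.1}.
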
 

As in Section \ref{ts2} we obtain next the asymptotic expansion of 
the kernel $T_{f,\,p}(x,x')$ in a neighborhood of the diagonal. 

We need to introduce the appropriate analogue of the Condition \ref{coe2.7}
 in the orbifold case, in order to take into account the group action 
associated to an orbifold chart.
Let $\{\Xi_p\}_{p\in\N}$ be a sequence of linear operators 
$\Xi_p: L^2(X,L^p\otimes E)\longrightarrow L^2(X,L^p\otimes E)$ 
with smooth kernel $\Xi_p(x,y)$ with respect to $dv_X(y)$.

\begin{condition}\label{coe2.71}
Let $k\in\N$. Assume that for every open set $U\in\mathcal{U}$ and 
every orbifold chart
$(G_U,\widetilde{U})\stackrel{\tau_U}{\longrightarrow} U$, 
there exists a sequence of kernels 
$\{\wi{\Xi}_{p, U}(\wi{x},\wi{x}^{\,\prime})\}_{p\in\N}$ and a family 
$\{Q_{r,\,x_0}\}_{0\leqslant r\leqslant k,\,x_0\in X}$ such that 
\begin{itemize}
\item[(a)] $Q_{r,\,x_0}\in \End( E)_{x_0}[\wi{Z},\wi{Z}^{\prime}]$\,, 
\item[(b)] $\{Q_{r,\,x_0}\}_{r\in\N,\,x_0\in X}$ is smooth with respect 
to the parameter $x_0\in X$,
\item[(c)] for every fixed $\var''>0$ and every 
$\wi{x},\wi{x}^{\,\prime}\in \wi{U}$ the following holds
\begin{equation} \label{toe6.5}
\begin{split}
& (g,1)\wi{\Xi}_{p, U}(g^{-1}\wi{x},\wi{x}^{\,\prime})=
(1,g^{-1})\wi{\Xi}_{p, U}(\wi{x},g\wi{x}^{\,\prime})
\quad \text{for any } \, \, g\in G_{U}\; \text{(cf. \eqref{pb4.22a})}, \\
&\wi{\Xi}_{p, U}(\wi{x},\wi{x}^{\,\prime})= \cO(p^{-\infty}) \quad
 \quad\text{for}\, \,   d(x,x^\prime)>\var'',\\
&\Xi_{p}(x,x^\prime)
= \sum_{g\in G_{U}} (g,1)\wi{\Xi}_{p, U}(g^{-1}\wi{x},\wi{x}^{\,\prime})
+ \cO(p^{-\infty}),
\end{split}\end{equation}
and moreover, for every relatively compact open subset $\wi{V}\subset \wi{U}$, the relation
\begin{equation} \label{toe6.6}
p^{-n}\, \wi{\Xi}_{p,U,\wi{x}_0}(\wi{Z},\wi{Z}^\prime)\cong \sum_{r=0}^k 
(Q_{r,\,\wi{x}_0} \cP_{\wi{x}_0})
(\sqrt{p}\wi{Z},\sqrt{p}\wi{Z}^{\prime})p^{-\frac{r}{2}}
+\mO(p^{-\frac{k+1}{2}})\, \quad\mbox{  for }\, \,  \wi{x}_0\in \wi{V},
\end{equation}
holds in the sense of \eqref{toe2.7}. 
\end{itemize}
\end{condition}
\begin{notation}\label{noe2.71}
If the sequence $\{\Xi_p\}_{p\in\N}$ satisfies Condition \ref{coe2.71},
 we write
\begin{equation} \label{toe6.7}
p^{-n}\, \Xi_{p,\,x_0}(Z,Z^\prime)\cong \sum_{r=0}^k 
(Q_{r,\,x_0} \cP_{x_0})(\sqrt{p}Z,\sqrt{p}Z^{\prime})p^{-\frac{r}{2}}
+\mO(p^{-\frac{k+1}{2}})\,.
\end{equation}
\end{notation}
Note that although the Notations \ref{noe2.71} and \ref{noe2.7} 
are formally similar, they have different meaning.

\begin{lemma}\label{toet6.2} The smooth family
$Q_{r,\,x_0}\in \End( E)_{x_0}[\wi{Z},\wi{Z}^{\prime}]$ 
in Condition \ref{coe2.71} is uniquely determined by $\Xi_p$.
\end{lemma}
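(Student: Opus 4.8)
The plan is to reduce the statement to the regular locus $X_{reg}$ of the orbifold and there to read off $Q_{r,x_0}$ from the kernel of $\Xi_p$ by a parabolic rescaling, invoking uniqueness of asymptotic expansions. First I would note that, by hypothesis (b) of Condition \ref{coe2.71}, the family $\{Q_{r,x_0}\}_{x_0\in X}$ is smooth, hence continuous, on $X$, and that $X_{reg}$ is open and dense in $X$ (a non-trivial element of a uniformizing group $G_U$ is a non-trivial isometry for a $G_U$-invariant metric on $\widetilde U$, so its fixed-point set has positive codimension). Thus if two families $\{Q_{r,x_0}\}$ and $\{Q'_{r,x_0}\}$ satisfy Condition \ref{coe2.71} for the same $\Xi_p$ and agree on $X_{reg}$, they agree on $X$; so it suffices to prove that $\Xi_p$ determines $Q_{r,x_0}$ for every $x_0\in X_{reg}$ and every $r$.

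For a fixed such $x_0$, I would choose an orbifold chart $(G_U,\widetilde U)\stackrel{\tau_U}{\longrightarrow}U$ around it with $\widetilde x_0\in\tau_U^{-1}(x_0)$, and after shrinking $U$ arrange that $\widetilde U$ is relatively compact, of small diameter, and such that for every $g\in G_U\setminus\{1\}$ the points $g^{-1}\widetilde x$ and $\widetilde x'$ stay at a distance bounded away from $0$, uniformly for $\widetilde x,\widetilde x'\in\widetilde U$ (possible since $x_0$ is regular, so $\min_{g\neq1}d(g\widetilde x_0,\widetilde x_0)>0$). Applying \eqref{toe6.6} with base point $g^{-1}\widetilde x$, the exponential off-diagonal decay built into the estimate underlying it together with the Gaussian decay of $\cP_{g^{-1}\widetilde x}$ forces $\wi\Xi_{p,U}(g^{-1}\widetilde x,\widetilde x')=\cO(p^{-\infty})$ for $g\neq1$. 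Hence the averaging formula \eqref{toe6.5} collapses near $\widetilde x_0$ to $\wi\Xi_{p,U}(\widetilde x,\widetilde x')=\Xi_p(x,x')+\cO(p^{-\infty})$, and, since $\tau_U$ is a diffeomorphism near $\widetilde x_0$, normal coordinates $Z$ centered at $x_0$ on $X$ lift to normal coordinates centered at $\widetilde x_0$ on $\widetilde U$. Therefore \eqref{toe6.6} gives, for each fixed $k$,
\begin{equation*}
\Big|p^{-n}\Xi_{p,x_0}(Z,Z')\kappa_{x_0}^{1/2}(Z)\kappa_{x_0}^{1/2}(Z')-\sum_{r=0}^k(Q_{r,x_0}\cP_{x_0})(\sqrt pZ,\sqrt pZ')\,p^{-\frac r2}\Big|\leqslant C_k\,p^{-\frac{k+1}2}(1+\sqrt p|Z|+\sqrt p|Z'|)^Me^{-\sqrt{C_0p}\,|Z-Z'|}+\cO(p^{-\infty}).
\end{equation*}

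Next I would substitute $Z=u/\sqrt p$, $Z'=u'/\sqrt p$ with $u,u'\in T_{x_0}X\simeq\C^n$ fixed. For fixed $(u,u')$ the weight $(1+|u|+|u'|)^Me^{-\sqrt{C_0}|u-u'|}$ and every $(Q_{r,x_0}\cP_{x_0})(u,u')$ are independent of $p$, so the inequality becomes a genuine asymptotic expansion
\begin{equation*}
p^{-n}\Xi_{p,x_0}(u/\sqrt p,u'/\sqrt p)\,\kappa_{x_0}^{1/2}(u/\sqrt p)\,\kappa_{x_0}^{1/2}(u'/\sqrt p)=\sum_{r=0}^k(Q_{r,x_0}\cP_{x_0})(u,u')\,p^{-\frac r2}+\cO(p^{-\frac{k+1}2})
\end{equation*}
valid for every $k$. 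The left-hand side is built only from $\Xi_p$ and the fixed geometry (the volume ratio $\kappa_{x_0}$), so by uniqueness of asymptotic expansions in powers of $p^{-1/2}$ each coefficient $(Q_{r,x_0}\cP_{x_0})(u,u')$ is uniquely determined by $\Xi_p$. Since $\cP_{x_0}$ is nowhere zero — it equals ${\det}_{\C}(|\bJ_{x_0}|)$ times an exponential, cf.\ \eqref{3ue62} — one recovers $Q_{r,x_0}(u,u')=(Q_{r,x_0}\cP_{x_0})(u,u')/\cP_{x_0}(u,u')$ for all $u,u'$; as $Q_{r,x_0}$ is a polynomial in $(u,u')=(\widetilde Z,\widetilde Z')$, this pins down the polynomial $Q_{r,x_0}$. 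Combined with the reduction of the first paragraph, this proves the lemma.

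The step requiring most care is the orbifold bookkeeping in the second paragraph: showing that near a regular point the $G_U$-average in \eqref{toe6.5} reduces to the single term $g=1$ modulo $\cO(p^{-\infty})$ (this uses both the shrinking of the chart and the off-diagonal Gaussian decay of the model Bergman kernel $\cP$), and verifying that smoothness of $\{Q_{r,x_0}\}$ together with the density of $X_{reg}$ legitimately transfers uniqueness to the singular locus; everything else is the same rescaling argument as in the manifold case.
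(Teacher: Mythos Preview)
Your proof is correct and follows essentially the same strategy as the paper: reduce to the regular locus by density and smoothness of $\{Q_{r,x_0}\}$, show that there the $G_U$-average in \eqref{toe6.5} collapses to the identity term modulo $\cO(p^{-\infty})$, and then read off $Q_{r,x_0}$ from the resulting local expansion by rescaling. The paper obtains the collapse slightly more directly from the second line of \eqref{toe6.5} (the built-in off-diagonal decay of $\wi{\Xi}_{p,U}$) rather than via the Gaussian weight in \eqref{toe6.6}, and notes explicitly that one may take $\wi{\Xi}_{p,W}=\wi{\Xi}_{p,U}|_{\wi{W}\times\wi{W}}$ when passing to a smaller chart, but these are minor differences in bookkeeping.
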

\begin{proof} Clearly, for $W\subset U$, the restriction of $\wi{\Xi}_{p,U}$ 
to $\wi{W} \times \wi{W}$ verifies \eqref{toe6.5}, thus we can take 
$\wi{\Xi}_{p,W}=\wi{\Xi}_{p,U}|_{\wi{W} \times \wi{W}}$.
Since $G_U$ acts freely on $\tau_U^{-1}(U_{reg})\subset \wi{U}$, 
we deduce from \eqref{toe6.5} and \eqref{toe6.6} that 
\begin{align}\label{toe6.8}
\Xi_{p,\,x_0}(Z,Z^\prime)= \wi{\Xi}_{p,\,U,\,\wi{x}_0}(\wi{Z},\wi{Z}^\prime)
+ \cO(p^{-\infty})\,,
\end{align}
for every $x_0\in U_{reg}$
and $|\wi{Z}|,|\wi{Z}^\prime|$ small enough.
We infer from  \eqref{toe6.6} and \eqref{toe6.8} that
$Q_{r,\,x_0}\in \End( E)_{x_0}[\wi{Z},\wi{Z}^{\prime}]$ 
is uniquely determined for $x_0\in X_{reg}$\,. 
Since $Q_{r,\,x_0}$ depends smoothly on $x_0$, 
its lift to $\wi{U}$ is smooth. Since the set $\tau_U^{-1}(U_{reg})$ 
is dense in $\wi{U}$, we see that
the smooth family $Q_{r,\,x_0}$ is uniquely determined by $\Xi_p$.
\end{proof}

\begin{lemma}\label{toet6.3} There exist polynomials $J_{r,\,x_0}, Q_{r,x_0}(f)$ $\in \End(E)_{x_0}[\wi{Z},\wi{Z}^{\prime}]$ such that  
Lemmas \ref{toet2.1}, \ref{toet2.2}, \ref{toet2.3} and \ref{toet2.5} 
still hold under the notation \eqref{toe6.7}.
Moreover, 
\begin{equation}\label{toe6.15}
J_{0,\,x_0}= \Id_E, \quad J_{1,\,x_0}=0.
\end{equation}
\end{lemma}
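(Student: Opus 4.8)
The plan is to reduce the statement to the manifold results of Section \ref{ts2}, applied on the uniformizing charts $\widetilde{U}$, using the localization principle of \eqref{pb4.21} together with the finite propagation speed of the wave operator on orbifolds, and then to reassemble the global kernels by means of the composition formulas \eqref{pb4.7} and \eqref{pb4.9}.

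First I would fix an orbifold chart $(G_U,\widetilde{U})\stackrel{\tau_U}{\longrightarrow}U$. On $\widetilde{U}$ the operator $\widetilde{D}_p$ is a genuine $\text{spin}^c$ Dirac operator of the type studied in Section \ref{s3}, namely $\sqrt{2}(\db+\db^{*})$, so Theorem \ref{tue17} applies and produces polynomials $\widetilde{J}_{r,\widetilde{x}_0}$ with the usual properties (parity $r$, degree $\leqslant 3r$, $\widetilde{J}_{0,\widetilde{x}_0}=I_{\C\otimes E}$) governing the off-diagonal expansion of $\widetilde{P}_p$. Since we project onto $H^0\subset\Omega^{0,0}$, the Bergman kernel is valued in $\End(\C\otimes E)=\End(E)$, so we may regard $\widetilde{J}_{r,\widetilde{x}_0}\in\End(E)_{\widetilde{x}_0}[\widetilde{Z},\widetilde{Z}^{\prime}]$; being built canonically from the $G_U$-equivariant local geometry they descend to a smooth family $J_{r,x_0}$ on $X$, with $J_{0,x_0}=\Id_E$. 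To get $J_{1,x_0}=0$: in the K\"ahler case $\bJ=J$, all eigenvalues equal $2\pi$ (consistently with \eqref{pb4.25}), and by the first case of Remark \ref{rue} we have $\boldsymbol{\mO}_1=0$; hence \eqref{abk2.76} gives $\cP^{(1)}_{\widetilde{x}_0}=0$, i.e.\ $\widetilde{J}_{1,\widetilde{x}_0}=0$, which yields \eqref{toe6.15}.

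Next, using \eqref{pb4.21}, the fact that $F(\widetilde{D}_p)(\widetilde{x},\widetilde{x}^{\prime})$ vanishes for $d(\widetilde{x},\widetilde{x}^{\prime})\geqslant\var$ and that $F(\widetilde{D}_p)(\widetilde{x},\cdot)$ depends only on $\widetilde{D}_p$ near $\widetilde{x}$, together with \eqref{pb4.22}, one obtains $P_p(x,x^{\prime})=\sum_{g\in G_U}(g,1)\widetilde{P}_p(g^{-1}\widetilde{x},\widetilde{x}^{\prime})+\cO(p^{-\infty})$, where $\widetilde{P}_p$ satisfies the equivariance \eqref{pb4.22a} (that is, \eqref{toe6.5}) and, by Theorem \ref{tue17} on $\widetilde{U}$, the near-diagonal expansion \eqref{toe6.6} with $Q_{r,\widetilde{x}_0}=\widetilde{J}_{r,\widetilde{x}_0}$. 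Thus $\{P_p\}$ satisfies Condition \ref{coe2.71}, which is precisely the orbifold reformulation of Lemma \ref{toet2.2}; by Lemma \ref{toet6.2} the family $J_{r,x_0}$ is the unique one occurring there. For $T_{f,\,p}$ one lifts $f$ to the $G_U$-invariant section $\widetilde{f}$ on $\widetilde{U}$, forms $\widetilde{P}_p\,\widetilde{f}\,\widetilde{P}_p$ there, runs verbatim the proof of Lemma \ref{toet2.3} on the chart to obtain its expansion with $Q_{r,\widetilde{x}_0}(f)=\sum_{r_1+r_2+|\alpha|=r}\cK\big[\widetilde{J}_{r_1,\widetilde{x}_0},\frac{\partial^{\alpha}\widetilde{f}_{\widetilde{x}_0}}{\partial Z^{\alpha}}(0)\frac{Z^{\alpha}}{\alpha!}\widetilde{J}_{r_2,\widetilde{x}_0}\big]$, and then uses \eqref{pb4.9} and \eqref{pb4.7} to conclude $T_{f,\,p}(x,x^{\prime})=\sum_{g\in G_U}(g,1)(\widetilde{P}_p\widetilde{f}\widetilde{P}_p)(g^{-1}\widetilde{x},\widetilde{x}^{\prime})+\cO(p^{-\infty})$; hence $\{T_{f,\,p}\}$ obeys Condition \ref{coe2.71} with $Q_{0,x_0}(f)=f(x_0)\Id_E$, which is Lemma \ref{toet2.3}. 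The analogue of Lemma \ref{toet2.1} is Lemma \ref{toet6.1}, already available, and the analogue of Lemma \ref{toet2.5} follows from $Q_{1,x_0}(f)=f(x_0)J_{1,x_0}+\cK[J_{0,x_0},\frac{\partial f_{x_0}}{\partial Z_j}(0)Z_jJ_{0,x_0}]$, which collapses to its second term since $J_{1,x_0}=0$.

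The $\cK$-calculus computations are literally those of Sections \ref{toes1}--\ref{ts2} and require no change. The genuinely new point — and the step I expect to demand the most care — is the bookkeeping of the group action: one must verify that the chart-level expansions on $\widetilde{U}$ reassemble correctly through \eqref{pb4.7} and \eqref{pb4.9} into operators on $X$ satisfying the equivariance constraint \eqref{toe6.5}, that composition of such operators again has this form (so that Condition \ref{coe2.71} is stable under the product defining $T_{f,\,p}$), and that the uniqueness provided by Lemma \ref{toet6.2} legitimately identifies $Q_{r,x_0}$ with the descent of $\widetilde{J}_{r,\widetilde{x}_0}$, resp.\ of $Q_{r,\widetilde{x}_0}(f)$. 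Once these orbifold composition and equivariance issues are settled, the remainder of the argument is identical to the compact manifold case.
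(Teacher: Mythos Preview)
Your proposal is correct and follows essentially the same route as the paper: localize via \eqref{pb4.21} and finite propagation speed, apply the manifold expansion of Theorem \ref{tue17} on the uniformizing chart, deduce $J_{1,x_0}=0$ from $\boldsymbol{\mO}_1=0$ in the K\"ahler case (Remark \ref{rue} and \eqref{abk2.76}), and reassemble on $X$ through \eqref{pb4.22a}, \eqref{pb4.22}, \eqref{pb4.7}, \eqref{pb4.9}. One small point of care: on the non-compact chart $\widetilde{U}$ there is no global Bergman projection $\widetilde{P}_p$, so the object carrying the off-diagonal expansion is $F(\widetilde{D}_p)$ rather than $\widetilde{P}_p$ (this is exactly how the paper proceeds, cf.\ \eqref{toe6.17}); your argument goes through unchanged once you write $F(\widetilde{D}_p)$ in place of $\widetilde{P}_p$.
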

\begin{proof}   
The analogues of Proposition \ref{0t3.0}, Theorem \ref{tue17} for 
the current situation and \eqref{pb4.22a}, \eqref{pb4.22} show that
Lemmas \ref{toet2.1} and \ref{toet2.2} still hold under 
the notation \eqref{toe6.7}.
Since in our case $\om$ is a K{\"a}hler form with respect to 
the complex structure $J$ and $\bJ=J$, we have $\mO_1=0$ 
(cf. \eqref{1c31} and Remark \ref{rue}). 
Hence \eqref{abk2.76} entails \eqref{toe6.15}.
Moreover, \eqref{pb4.21} implies
\begin{align}\label{toe6.17}
T_{f,\,p}(x,x^\prime)= \int_{X} F(D_p)(x,x'')f(x'')
 F(D_p)(x'',x^\prime) dv_X(x'') + \cO(p^{-\infty}).
\end{align}
Therefore, we deduce from \eqref{pb4.9}, \eqref{pb4.22a},
\eqref{pb4.22} and \eqref{toe6.17} that Lemmas \ref{toet2.3} 
and \ref{toet2.5} still hold under the notation \eqref{toe6.7}.
\end{proof}

We will prove next a useful criterion (an analogue of 
Theorem \ref{toet3.1}) which ensures that
a given family is a Toeplitz operator. 
\begin{thm}\label{toet6.4}
Let $\{T_p:L^2(X,L^p\otimes E)\longrightarrow L^2(X,L^p\otimes E)\}$ 
be a family of bounded linear operators which satisfies 
the following three conditions:
\begin{itemize}
\item[(i)] For any $p\in \N$,  $P_p\,T_p\,P_p=T_p$\,.
\item[(ii)] For any $\varepsilon_0>0$ and any $l\in\N$, 
there exists $C_{l,\varepsilon_0}>0$ such that 
for all $p\geqslant 1$ and all $(x,x')\in X\times X$ 
with $d(x,x')>\varepsilon_0$,
\begin{equation} \label{toe6.21}
|T_{p}(x,x')|\leqslant C_{l,{\varepsilon_0}}p^{-l}.
\end{equation}
\item[(iii)] There exists a family of polynomials 
$\{\mQ_{r,\,x_0}\in\End(E)_{x_0}[Z,Z^{\prime}]\}_{x_0\in X}$ 
such that\,{\rm:} 
\begin{itemize}
\item[(a)] each $\mQ_{r,\,x_0}$ has the same parity as $r$, 

\item[(b)] the family is smooth in $x_0\in X$ and 

\item[(c)] there exists $\var^\prime\in]0,a_X/4[$ such that for every 
$x_0\in X$, every $Z,Z^\prime \in T_{x_0}X$ with 
$\abs{Z},\abs{Z^{\prime}}<\var^\prime$ and every $k\in\N$, we have 
\begin{equation} \label{toe6.22}
p^{-n}T_{p,\,x_0}(Z,Z^{\prime})\cong 
\sum^k_{r=0}(\mQ_{r,\,x_0}\cP_{x_0})
(\sqrt{p}Z,\sqrt{p}Z^{\prime})p^{-\frac{r}{2}} + \mO(p^{-\frac{k+1}{2}}).
\end{equation}
in the sense of \eqref{toe6.7}.
\end{itemize}
\end{itemize}

Then $\{T_p\}$ is a Toeplitz operator. 
\end{thm}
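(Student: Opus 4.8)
The plan is to reduce Theorem \ref{toet6.4} to the already-established compact non-orbifold criterion, Theorem \ref{toet3.1}, by carrying out all the analysis \emph{locally on orbifold charts}, where the group action is accounted for by the averaging formula \eqref{pb4.7}. First I would observe that the self-adjoint reduction \eqref{toe3.3} works verbatim, so we may assume each $T_p$ is self-adjoint. Then, exactly as in \eqref{toe3.5}, I would \emph{define} $g_0\in\cC^\infty(X,\End(E))$ by setting $g_0(x_0)=\mQ_{0,\,x_0}(0,0)$ (this makes sense on $X_{reg}$, and extends smoothly to $X$ since the family $\mQ_{r,\,x_0}$ is smooth in $x_0$ by hypothesis (iii)(b)). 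The goal is the inductive expansion \eqref{toe3.4}, and the whole proof of Theorem \ref{toet3.1} is a finite sequence of manipulations of the polynomials appearing in the near-diagonal expansions together with $L^2$-operator-norm estimates built from the Cauchy--Schwarz argument of Lemma \ref{toet3.4}.

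Next I would lift everything to a chart $(G_U,\widetilde U)\to U$. On $\widetilde U$ the operators $\widetilde D_p$, $\widetilde P_p$, $\widetilde T_p$ behave as in the manifold case (Lemma \ref{toet6.3} already records that Lemmas \ref{toet2.1}--\ref{toet2.5} survive under the notation \eqref{toe6.7}, together with $J_{0,x_0}=\Id_E$, $J_{1,x_0}=0$ from \eqref{toe6.15}). The one genuinely new point is that the kernel identities used in the proof of Theorem \ref{toet3.1} — most importantly the self-adjointness symmetry \eqref{c6.31}, the off-diagonal decay \eqref{toe3.1}, and the composition formula for $P_pT_pP_p$ — must be transported through \eqref{pb4.7} and \eqref{pb4.9}. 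Because $F(\widetilde D_p)(\widetilde x,\cdot)$ is supported in $B^{\widetilde X}(\widetilde x,\var)$ by finite propagation speed, for $\widetilde x$ away from the fixed-point set of $G_U$ only the $g=1$ term in \eqref{pb4.7} contributes near the diagonal, so on $\tau_U^{-1}(U_{reg})$ the local kernels $\widetilde T_{p,U}$, $\widetilde P_{p,U}$ literally coincide, up to $\cO(p^{-\infty})$, with the objects of the manifold proof; the near-diagonal polynomial calculus of Section \ref{toes1} is unaffected. Hence Proposition \ref{toet3.2} — that $\mQ_{0,x_0}$ is a constant scalar endomorphism — and Proposition \ref{toet3.8} — that $p^{-n}(T_p-T_{g_0,p})_{x_0}\cong\mO(p^{-1})$ — go through on $X_{reg}$, and by density of $\tau_U^{-1}(U_{reg})$ in $\widetilde U$ and smoothness in $x_0$ they extend to all of $X$. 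The induction step $p(T_p-P_pg_0P_p)$ again satisfies conditions (i)--(iii) — off-diagonal decay is inherited, and the $p^0$, $p^{-1/2}$ coefficients of the difference vanish by Proposition \ref{toet3.2} and Lemma \ref{toet3.9} — so iterating produces the full sequence $(g_l)$ and the expansion \eqref{toe2.3}.

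I would be careful about one technical wrinkle in transferring Lemma \ref{toet3.4}: the $L^2$-estimates there are global, obtained by splitting $\int_X$ into a ball $B^X(x,\var')$ and its complement. On the orbifold the integral $\int_X$ is the averaged chart integral \eqref{pb4.5}, and the auxiliary operators $F^{(i)}P_p$, $R_{r,p}$ are defined by cut-off kernels near the diagonal, so their orbifold kernels are again given by \eqref{pb4.7}; the Cauchy--Schwarz bound and the $\cO(p^{-\infty})$ off-diagonal control survive because $|G_U|$ is a fixed finite constant on each chart and $X$ is compact (finitely many charts). Thus all the norm estimates \eqref{b6.342}--\eqref{b6.387} hold with $T_p$ replaced by its orbifold version.

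The main obstacle, and the step deserving the most care, is precisely this bookkeeping: showing that the purely \emph{formal} polynomial computations of Section \ref{toes1} (used to derive Proposition \ref{toet3.2} and the recursion Lemmas \ref{toet3.5}--\ref{toet3.7}) are insensitive to the group action, i.e. that passing to $X_{reg}$ loses nothing because everything in sight depends smoothly on the base point $x_0$ and $X_{reg}$ is dense — so that the identities, once proved on $X_{reg}$, propagate to singular points by continuity. Once that principle is isolated (it is essentially Lemma \ref{toet6.2}, uniqueness of the $\mQ_{r,x_0}$ from $\Xi_p$), the proof of Theorem \ref{toet6.4} is a word-for-word repetition of the proof of Theorem \ref{toet3.1}, reading the expansions in the sense of Notation \ref{noe2.71} instead of Notation \ref{noe2.7}, and I would phrase it that way rather than reproducing the argument in full.
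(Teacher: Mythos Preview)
Your proposal is correct and follows essentially the same approach as the paper's proof: reduce to the self-adjoint case, define $g_0(x_0)=\mQ_{0,x_0}(0,0)$, prove the analogue of Proposition \ref{toet3.2} on orbifold charts via the averaging formula \eqref{pb4.7}, and then run the induction exactly as in Theorem \ref{toet3.1}.

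One technical refinement worth noting: where you write the auxiliary operators as $F^{(i)}P_p$, the paper instead uses $F^{(i)}F(D_p)$ (see \eqref{toe6.28} versus \eqref{b6.33}). The point is that $F(\widetilde D_p)$ is genuinely defined on the chart $\widetilde U$ by finite propagation speed, so its kernel satisfies the equivariance \eqref{pb4.22a} and the orbifold kernel formula \eqref{pb4.22} exactly, whereas $\widetilde P_p$ has no intrinsic meaning on $\widetilde U$. Since $P_p-F(D_p)=\cO(p^{-\infty})$ by \eqref{pb4.21} this costs nothing in the estimates, but it is what makes the lift to the chart clean. With that substitution your argument is the paper's argument; your density-of-$X_{reg}$ principle for propagating the polynomial identities is exactly Lemma \ref{toet6.2}, and the paper invokes it in the same way.
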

\begin{proof} As explained in \eqref{toe3.3}, we can
assume that $T_p$ is self-adjoint.
We will define inductively the sequence
$(g_l)_{l\geqslant0}$, $g_l\in \cC^\infty(X, \End(E))$ 
such that 
\begin{equation}\label{toe6.23}
T_{p} = \sum_{l=0}^m P_{p}\, g_l \, p^{-l}\, P_{p}
+\mO(p^{-m-1})\quad\text{for every $m\geqslant 0$,}
\end{equation} 
using the same procedure as in \eqref{toe3.4}. 
Moreover, we can take these $g_l$'s to be self-adjoint.
For $x_0\in X$, we set
\begin{equation}\label{toe6.24}
g_0(x_0)=\mQ_{0,\,x_0}(0,0)\,\in\End(E_{x_0})\,.
\end{equation}
We will show that
\begin{equation}\label{toe6.25}
T_{p}=P_p\,g_0\,P_p + \mO(p^{-1}).
\end{equation} 
We need to establish the following analogue of Proposition \ref{toet3.2}.
\begin{prop}\label{toet6.5}
In the conditions of Theorem \ref{toet6.4}, we have 
$\mQ_{0,\,x_0}(Z,Z^{\prime})=\mQ_{0,\,x_0}(0,0)\in 
\End(E_{x_0})$
for all $x_0\in X$ and all $Z,Z^{\prime}\in T_{x_0}X$. 
\end{prop}
\begin{proof}
The key observation is the following. Let
$\{\Xi_{p}\}_{p\in\N}$, 
$\{Q_{0,\,x_0}\}_{x_0\in X}$ and $\{\Xi_{p}^\prime\}_{p\in\N}$, 
$\{Q_{0,\,x_0}^\prime\}_{x_0\in X}$ two pairs satisfying
 Condition \ref{coe2.71} for $k=0$. Then \eqref{pb4.9}, 
\eqref{toe6.5} and \eqref{toe6.6} imply that
\begin{equation} \label{toe6.26}
p^{-n} (\Xi_{p}\circ\Xi_{p}^\prime)_{x_0} (Z,Z^\prime)
\cong  ((Q_{0,\,x_0} \cP_{x_0})\circ (Q_{0,\,x_0}^\prime \cP_{x_0}))
(\sqrt{p}Z,\sqrt{p}Z^{\prime})
+\mO(p^{-\frac{1}{2}})\,,
\end{equation}
in the sense of Notation \ref{noe2.71} and \eqref{toe1.6}.

We modify now the proof of Lemma \ref{toet3.3}.
 Formula \eqref{toe6.22} for $k=0$ gives
\begin{equation}\label{toe3.101}
p^{-n}T_{p,\,x_0}(Z,Z^{\prime})
\cong (\mQ_{0,\,x_0}\cP_{x_0})(\sqrt{p}Z,\sqrt{p}Z^{\prime})
+ \mO(p^{-1/2}).
\end{equation}
Moreover, the analogue of Lemma \ref{toet2.2} shows that
\begin{equation} \label{toe2.91}
p^{-n} P_{p,\,x_0}(Z,Z^\prime)\cong  
(J_{0,\,x_0} \cP_{x_0})(\sqrt{p}Z,\sqrt{p}Z^{\prime})p^{-\frac{r}{2}}
+\mO(p^{-1/2})\,.
\end{equation}
By \eqref{toe3.101} and \eqref{toe2.91}  we can apply the observation 
at the beginning for $\Xi_p=T_p$ and $\Xi_p^\prime=P_p$ to obtain
\begin{equation} \label{bsy1.191}
p^{-n}(P_p\,T_{p}\,P_p)_{x_0}(Z,Z^{\prime})
\cong ((\cP J_0)  \circ(\mQ_{0}\cP)\circ(\cP J_0))_{x_0}
(\sqrt{p}Z,\sqrt{p}Z^{\prime}) + \mO(p^{-1/2}).
\end{equation}
Using the same argument as in the proof of Lemma \ref{toet3.3} (note also that
$J_{0,\,x_0}= \Id_E$ by \eqref{toe6.15}) we see that
$\mQ_{0,\,x_0}$ is a polynomial in $z,\ov{z}^\prime$.

Now, we need to establish the analogue of \eqref{6.38}.
We define $F^{(i)}(\wi{x},\wi{y})$, $\wi{F}^{(i)}(\wi{x},\wi{y})$
as in \eqref{b6.32}. Then from \eqref{pb4.22a}, \eqref{toe6.5}, 
we know that for $g\in G_U$, $\wi{x},\wi{y}\in \wi{U}_1$, 
\begin{equation} \label{toe6.27}
 g\cdot F^{(i)}(g^{-1}\wi{x},\wi{y})= F^{(i)}(\wi{x}, g\wi{y}).
\end{equation}
We denote by $F^{(i)}F(D_p)$ and
$F(D_p)\wi{F}^{(i)}$ the operators defined by the kernels
$$\eta(d(x,y))F^{(i)}(\wi{x},\wi{y})F(\wi{D}_p)(\wi{x},\wi{y})
\quad \mbox{ and }\quad
\eta(d(x,y))F(\wi{D}_p)(\wi{x},\wi{y})\wi{F}^{(i)}(\wi{x},\wi{y})$$
as in \eqref{pb4.6} and \eqref{pb4.7}. Set
\begin{equation}\label{toe6.28}
\cT_{p}= T_{p}
-\sum_{i\leqslant\deg F_x} (F^{(i)}F(D_p))\, p^{i/2}.
\end{equation}

Now using \eqref{toe6.28} instead of \eqref{b6.33},
by \eqref{pb4.6} and the proof of Proposition \ref{toet3.2}, 
we get the analogue of \eqref{6.38} and hence Proposition \ref{toet6.5}.
\end{proof}
We go on with the proof of Theorem \ref{toet6.4}. Applying Proposition \ref{toet6.5}
and the proof of Proposition \ref{toet3.8}, 
we obtain \eqref{toe6.25}.

Finally, we deduce \eqref{toe6.23} according to the pattern set down 
in the proof of Theorem \ref{toet3.1}.
This completes the proof of Theorem \ref{toet6.4}. 
\end{proof}

We can therefore show that the set of Toeplitz operators 
on a compact orbifold is 
closed under the composition of operators, so forms an algebra. 
\begin{thm}\label{toet6.7}
Let $(X,\om)$ be a compact K{\"a}hler orbifold and $(L,h^L)$ be 
a holomorphic Hermitian proper orbifold line bundle satisfying
the prequantization condition \eqref{0.-1}.
Let $(E,h^E)$ be an arbitrary holomorphic Hermitian proper orbifold 
vector bundle on $X$.

Consider $f,g\in\cC^\infty(X,\End(E))$. 
Then the product of the Toeplitz operators 
$T_{f,\,p}$ and  $T_{g,\,p}$ is a Toeplitz operator, 
more precisely, it admits an asymptotic expansion
in the sense of \eqref{toe4.2},
where $C_r(f,g)\in\cC^\infty(X,\End(E))$
and $C_r$ are bidifferential operators defined locally as in \eqref{toe4.2}
on each covering $\wi{U}$ of an orbifold chart
 $(G_U,\widetilde{U})\stackrel{\tau_U}{\longrightarrow}U$. 
In particular $C_0(f,g)=fg$.

If $f,g\in\cC^\infty(X)$, then \eqref{toe4.4} holds.

 Relation \eqref{toe4.17} also holds for any $f\in\cC^\infty(X,\End(E))$.
\end{thm}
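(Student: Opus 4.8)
The plan is to mimic the proof of Theorem \ref{toet4.1}, using the orbifold framework of Condition \ref{coe2.71}, Notation \ref{noe2.71} and the orbifold criterion Theorem \ref{toet6.4} in place of their manifold analogues. First I would check that $\{T_{f,\,p}T_{g,\,p}\}$ satisfies the three hypotheses of Theorem \ref{toet6.4}. Hypothesis (i), $P_p(T_{f,\,p}T_{g,\,p})P_p=T_{f,\,p}T_{g,\,p}$, is immediate from $P_p^2=P_p$. Hypothesis (ii), the off-diagonal bound $|(T_{f,\,p}T_{g,\,p})(x,x')|\leqslant C_{l}\,p^{-l}$ for $d(x,x')>\var_0$, follows from Lemma \ref{toet6.1} applied to each factor together with the polynomial growth of the Bergman kernel, splitting the composition integral exactly as in the proof of Lemma \ref{toet2.1}.

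The heart of the argument is hypothesis (iii), the near-diagonal expansion. Using \eqref{toe6.17} I would replace $P_p$ by $F(D_p)$ up to $\mO(p^{-\infty})$ inside $T_{f,\,p}$ and $T_{g,\,p}$, so that finite propagation speed confines the whole computation to a single orbifold chart $(G_U,\wi U)\stackrel{\tau_U}{\to}U$. On the cover $\wi U$ all data are $G_U$-equivariant and genuinely smooth, so the compact-manifold computation \eqref{toe4.5}--\eqref{toe4.7} applies verbatim to the composition of the lifted operators $\wi T_{f,\,p}\circ\wi T_{g,\,p}$: near any $\wi x_0\in\wi U$ its kernel admits the expansion \eqref{toe6.6} with $Q_{r,\,\wi x_0}(f,g)=\sum_{r_1+r_2=r}\cK[Q_{r_1,\,\wi x_0}(f),Q_{r_2,\,\wi x_0}(g)]$, the $Q_r(f)$ being those furnished by the orbifold version of Lemma \ref{toet2.3} in Lemma \ref{toet6.3}. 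The equivariance in \eqref{toe6.5} for the composed kernel is inherited from \eqref{pb4.22a} through the orbifold composition formula \eqref{pb4.9}, and the kernel downstairs is recovered by the group sum in \eqref{pb4.9}. Thus $\{T_{f,\,p}T_{g,\,p}\}$ obeys Condition \ref{coe2.71}, and Theorem \ref{toet6.4} gives that it is a Toeplitz operator. The inductive construction of the symbol in the proof of Theorem \ref{toet6.4} (patterned on Theorem \ref{toet3.1}) then yields $g_l=C_l(f,g)$ with $C_l$ bidifferential operators defined chartwise on each $\wi U$, and $C_0(f,g)=\cK[f(x_0)\Id_E,g(x_0)\Id_E]=fg$ since $J_{0,\,x_0}=\Id_E$ by \eqref{toe6.15}.

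For the commutator identity when $f,g\in\cC^\infty(X)$, I would rerun the computation \eqref{toe4.10}--\eqref{toe4.16}; it is in fact shorter here since $\bJ=J$ forces $J_{1,\,x_0}=0$ by \eqref{toe6.15}, so all the $J_1$-terms in \eqref{toe4.12}--\eqref{toe4.14} drop out. Feeding \eqref{toe4.15} and \eqref{toe1.13}, now with all eigenvalues $a_i=2\pi$ (cf. \eqref{pb4.25}), into \eqref{toe4.14} gives \eqref{toe4.3}, hence \eqref{toe4.4}. For the norm asymptotics I would follow the proof of Theorem \ref{toet4.2}: since $X_{reg}$ is dense and $f$ is continuous, $\norm{f}_\infty$ is approached at regular points, where the orbifold chart is trivial; choosing $x_0\in X_{reg}$ and $u_0\in E_{x_0}$ with $|u_0|_{h^E}=1$ and $|f(x_0)(u_0)|_{h^E}$ arbitrarily close to $\norm{f}_\infty$, the peak sections $S^p_{x_0}=p^{-n/2}P_p(e_L^{\otimes p}\otimes u_0)$ satisfy $\norm{T_{f,\,p}S^p_{x_0}-f(x_0)S^p_{x_0}}_{L^2}\leqslant Cp^{-1/2}\norm{S^p_{x_0}}_{L^2}$ by the Bergman kernel expansion, while $\norm{T_{f,\,p}}\leqslant\norm{f}_\infty$ is trivial; letting $p\to\infty$ gives \eqref{toe4.17}.

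The step I expect to cost the most care is hypothesis (iii): one must confirm that restricting to an orbifold chart and then summing over $G_U$ in \eqref{pb4.9} does not perturb the near-diagonal asymptotics — that the non-identity elements of $G_U$ contribute only $\mO(p^{-\infty})$ in a neighbourhood of the diagonal and that the equivariance \eqref{toe6.5} of the composed kernel genuinely holds — so that the local coefficients $Q_{r,\,\wi x_0}(f,g)$ coincide with those of the compact computation and, by Lemma \ref{toet6.2}, glue to a globally well-defined smooth family, hence to globally well-defined bidifferential operators $C_r$ on $X$.
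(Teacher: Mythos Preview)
Your proposal is correct and follows essentially the same route as the paper: replace $P_p$ by $F(D_p)$ via \eqref{pb4.21}, use the orbifold composition formula \eqref{pb4.9} to reduce to the lifted kernels on $\wi U$, and then invoke the manifold computations \eqref{toe4.5}--\eqref{toe4.16} verbatim on the cover, finally applying Theorem \ref{toet6.4}. The paper's own proof is only a few lines (it writes \eqref{toe6.29} and then refers to \eqref{pb4.9} and the proof of Theorem \ref{toet4.1}); you have simply spelled out the details.

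One small correction to your last paragraph: the statement that ``the non-identity elements of $G_U$ contribute only $\mO(p^{-\infty})$ in a neighbourhood of the diagonal'' is false near $X_{sing}$ --- elements of the isotropy group $G_{x_0}$ fix $\wi x_0$ and contribute nontrivially to $P_p(x_0,x_0)$ at finite order in $p$. But this is precisely why Condition \ref{coe2.71} and Notation \ref{noe2.71} are formulated in terms of the \emph{lifted} kernel $\wi\Xi_{p,U}$ on $\wi U\times\wi U$, with the group sum built into the third line of \eqref{toe6.5}. The expansion \eqref{toe6.6} is for $\wi\Xi_{p,U}$, not for $\Xi_p$ itself; the $Q_{r,\wi x_0}$ are the local coefficients on the cover, and Lemma \ref{toet6.2} uses density of $X_{reg}$ to show they are uniquely determined. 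So you should drop that clause; the rest of your paragraph (equivariance via \eqref{pb4.9}, globalization via Lemma \ref{toet6.2}) is exactly right.
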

\begin{proof}
Notice that by using \eqref{toe6.26} we have 
\begin{multline}\label{toe6.29}
(T_{f,\,p}\,T_{g,\,p})(x,x^\prime)= \int_{X}
(F(D_p)fF(D_p))(x,x'')(F(D_p) g F(D_p))(x'',x^\prime)
\,dv_{X}(x'')\\+ \cO(p^{-\infty}).
\end{multline}  
{}From \eqref{pb4.9},  \eqref{toe6.29} and the proof of Theorem \ref{toet4.1}, 
we get Theorem \ref{toet6.7}.
\end{proof}

\begin{rem}\label{toet4.31} As in Remark \ref{toet4.3}, 
Theorem \ref{toet6.7} shows that
on every compact K{\"a}hler orbifold $X$ 
admitting a prequantum line bundle $(L,h^L)$,
we can define in a canonical way an associative star-product
$f*g=\sum_{l=0}^\infty \hbar^{l}C_l(f,g)\in\cC^\infty(X)[[\hbar]]$ 
for every $f,g\in \cC^\infty(X)$,
called the \textbf{\emph{Berezin-Toeplitz star-product}}\/.
Moreover, $C_l(f,g)$ are bidifferential operators 
defined locally as in the smooth case.
\end{rem}

\subsection{Symplectic orbifolds} \label{pbs4.4}

In this Section we state the result for symplectic orbifolds.

We work on a compact symplectic orbifold $(X,\om)$ of real dimension $2n$.
Assume that there exists a proper orbifold Hermitian line bundle
$L$ over $X$ endowed with a Hermitian connection $\nabla^L$ 
with the prequantization property $\frac{\sqrt{-1}}{2\pi}R^L=\omega$. 
This implies in particular that there exist $k\in \N$
such that $L^k$ is a line bundle in the usual sense.
Let $(E,h^E)$ be a proper orbifold Hermitian vector bundle on $X$ equipped 
with a Hermitian connection $\nabla^E$.

Let $J$ be an almost complex structure on $TX$ such that \eqref{bk1.1} holds.
We endow $X$ with a Riemannian metric $g^{TX}$ compatible with $J$.

Then the construction in Section \ref{s3.0} goes through,
especially, we can define the spin$^c$ Dirac operator $D_p:
\Omega^{0,\scriptscriptstyle{\bullet}}(X,L^p\otimes E)\longrightarrow
\Omega^{0,\scriptscriptstyle{\bullet}}(X,L^p\otimes E)$.
The orthogonal projection $P_p:L^2(X,E_p)\longrightarrow\ke(D_p)$
with $E_p:=\Lambda^{0,\scriptscriptstyle{\bullet}}\otimes L^p\otimes E$ is called the Bergman projection.
The smooth kernel $P_p(\cdot,\cdot)$ of $P_p$ with respect to 
the Riemannian volume form $dv_X$, is called the Bergman kernel of $D_p$.

We define the Toeplitz operator $T_p:L^2(X,E_p)\longrightarrow L^2(X,E_p)$
as in Definition \ref{toe-def} by using the orthogonal projection $P_p$ 
defined above. 
Especially $T_{f,\,p}=P_p\,f\,P_p$ for $f\in\cC^{\infty}(X,\End(E))$.

By the argument in Section \ref{pbs4.2} we see that 
Theorem \ref{specDirac} and Proposition \ref{0t3.0} still hold:

\begin{thm}\label{toet6.11}
Assume that $(X,J,\om)$ is a compact symplectic orbifold endowed with 
a prequantum proper line bundle $(L,h^L,\nabla^L)$.
We endow $X$ with a Riemannian metric $g^{TX}$ compatible with $J$.  
Let $(E,h^E)$ be a proper orbifold Hermitian vector bundle on $X$
with Hermitian connection $\nabla^E$. Then 
\begin{itemize}
\item[(i)] the associated Dirac operator $D_p$ has 
a spectral gap \eqref{diag5}, and 
\item[(ii)] $P_p(x,x^\prime)=\cO(p^{-\infty})$ for $d(x,x^\prime)>\var>0$ 
in the sense of \eqref{1c3}.
\end{itemize}
\end{thm}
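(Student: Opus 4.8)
The plan is to transfer the two statements from the compact manifold case (Theorem \ref{specDirac} and Proposition \ref{0t3.0}) to the orbifold setting by means of the lifting-to-charts machinery set up in Section \ref{pbs4.1}. For part (i), I would first observe that the spectral gap estimate \eqref{main1}-\eqref{diag5} is a purely local consequence of the Lichnerowicz (Bochner-Kodaira) formula for $D_p^2$ together with the pointwise positivity $\mu_0>0$ of $R^L$ as defined in \eqref{to1.13}. On any orbifold chart $(G_U,\widetilde{U})\stackrel{\tau_U}{\longrightarrow}U$, the lifted operator $\widetilde{D}_p$ on $\widetilde{U}$ is the ordinary spin$^c$ Dirac operator on the $G_U$-manifold $\widetilde{U}$ associated to the $G_U$-equivariant data $\widetilde{L},\widetilde{E},\widetilde{g^{TX}}$, and the constant $\mu_0$ computed on $X$ equals the infimum of the corresponding quantities on the charts (since the $a_j(x_0)$ are intrinsic). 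Hence the pointwise Lichnerowicz inequality holds verbatim upstairs, and since $C^\infty(X,E_p)$ sections are precisely the $G_U$-invariant sections on the charts (and the $L^2$ norm is computed via \eqref{pb4.5}), integrating the Lichnerowicz identity over $X$ using a partition of unity subordinate to a locally finite cover by orbifold charts reproduces \eqref{main1} for $s\in\Omega^{0,>0}(X,L^p\otimes E)$. The spectral gap \eqref{diag5} then follows exactly as in the proof of Theorem \ref{specDirac}, using that $D_p^2$ preserves the $\Z$-grading.

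For part (ii), I would run the localization argument of Proposition \ref{0t3.0} chart by chart. The first point is that the function $F(a)$ defined in \eqref{0c3} lies in $\mathcal{S}(\R)$ with $F(0)=1$, so by the functional calculus together with the spectral gap from part (i) one gets $\|F(D_p)-P_p\|=\mathcal{O}(p^{-\infty})$ as operators, and the same for the Schwartz kernels in any $\mathcal{C}^m$ norm — this is purely spectral and works identically on the orbifold since $P_p$ and $D_p$ make sense as before. The second, crucial, point is the finite propagation speed of solutions of the wave equation $\partial_t u + iD_p u=0$: as recalled in the excerpt (and as in \cite{Ma05}, \cite[Appendix D.2]{MM:07}), this property persists on orbifolds because the lifted operator $\widetilde{D}_p$ on each chart has the usual finite propagation speed and the charts glue compatibly. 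Since $\mathbf{f}$ is supported in $[-\var,\var]$, it follows that $F(D_p)(x,\cdot)$ depends only on the restriction of $D_p$ to the ball $B^X(x,\var)$ and vanishes for $d(x,x')>\var$. Combining this vanishing with the uniform $\mathcal{O}(p^{-\infty})$ closeness of $F(D_p)$ to $P_p$ gives $P_p(x,x')=\mathcal{O}(p^{-\infty})$ for $d(x,x')>\var$, in the sense of \eqref{1c3}, which is assertion (ii).

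The main obstacle — such as it is — is not analytic depth but bookkeeping: one must be careful that the chart-wise estimates are uniform in the base point $x_0$ and compatible under the transition morphisms $\varphi_{VU}$, so that the pieces patch to a global statement on $X$, and that the $\mathcal{C}^m$ norms on $X$ (defined via lifts to charts) are the ones controlled by the chart-wise constants. Since $X$ is compact, only finitely many charts are needed, so uniformity is automatic once each chart is handled; the group averaging in \eqref{pb4.7} only introduces the finite factor $|G_U|$ and finitely many translates $g^{-1}\widetilde{x}$, none of which affects the $\mathcal{O}(p^{-\infty})$ bounds. Thus no genuinely new difficulty arises beyond what is already contained in Theorem \ref{specDirac}, Proposition \ref{0t3.0}, and the orbifold finite-propagation-speed fact, and the proof reduces to assembling these.
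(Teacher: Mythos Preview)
Your proposal is correct and follows essentially the same approach as the paper: the paper's own ``proof'' of Theorem~\ref{toet6.11} is the single sentence ``By the argument in Section~\ref{pbs4.2} we see that Theorem~\ref{specDirac} and Proposition~\ref{0t3.0} still hold,'' and what you have written is precisely an unpacking of that argument --- the Lichnerowicz formula on charts for the spectral gap, and the spectral-gap-plus-finite-propagation-speed mechanism (citing the same sources \cite{Ma05}, \cite[Appendix D.2]{MM:07}) for the off-diagonal decay. Your discussion of the bookkeeping (uniformity over finitely many charts, the harmless factor $|G_U|$ from group averaging) is a welcome elaboration of points the paper leaves implicit.
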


Now by combining the argument in
 Sections \ref{s5.3} and \ref{pbs4.3}, we get  
the following extension of Theorem \ref{toet4.1}.
\begin{thm}\label{toet6.12}
Let us make the same assumptions as in Theorem \ref{toet6.11}.
Then for every $f,g\in\cC^\infty(X,\End(E))$ the product of 
the Toeplitz operators $T_{f,\,p}$ and  $T_{g,\,p}$ is a Toeplitz operator, 
more precisely, it admits an asymptotic expansion
in the sense of \eqref{toe4.2}, where
$C_r(f,g)\in\cC^\infty(X,\End(E))$ and $C_r$ are bidifferential operators 
defined locally as in \eqref{toe4.2} on each covering $\wi{U}$ of 
an orbifold chart $(G_U,\widetilde{U})\stackrel{\tau_U}{\longrightarrow}U$. 
In particular $C_0(f,g)=fg$.

If $f,g\in\cC^\infty(X)$, then \eqref{toe4.4} holds.

Relation \eqref{toe4.17} also holds for any $f\in\cC^\infty(X,\End(E))$.
\end{thm}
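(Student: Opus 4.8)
The plan is to run the argument of Section~\ref{toes4} chart by chart, using the orbifold apparatus of Sections~\ref{pbs4.1}--\ref{pbs4.3}. Three ingredients are needed: (a) the off-diagonal asymptotic expansion, in the sense of Notation~\ref{noe2.71}, of the Bergman kernel $P_p(\cdot,\cdot)$ of the spin$^c$ Dirac operator $D_p$; (b) the symplectic-orbifold analogue of the Toeplitz criterion Theorem~\ref{toet6.4}; and (c) the composition formula \eqref{pb4.9} for Schwartz kernels on an orbifold. For (a), Theorem~\ref{toet6.11} supplies the spectral gap and the far off-diagonal decay; localizing on a chart $(G_U,\widetilde U)\stackrel{\tau_U}{\to}U$ and running the argument of Section~\ref{s5.3} on the covering $\widetilde U$ gives the expansion of Theorem~\ref{tue17} for the lifted kernel $\widetilde P_p(\widetilde x,\widetilde x')$, and \eqref{pb4.7} then produces the expansion of $P_p$ in the sense of Notation~\ref{noe2.71}. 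This yields the symplectic-orbifold analogues of Lemmas~\ref{toet2.1}--\ref{toet2.5}, with polynomials $J_{r,x_0}$ and $Q_{r,x_0}(f)$ in $\End(\Lambda(T^{*(0,1)}X)\otimes E)_{x_0}[Z,Z']$ defined exactly as in the smooth symplectic case, so that, contrary to the K\"ahler orbifold situation, $J_{1,x_0}$ and $\boldsymbol{\mO}_1$ need not vanish; but we shall only use, as in Remark~\ref{rue}, that $\boldsymbol{\mO}_1$ acts as the identity on the $E$-factor. For (b), the proof of Theorem~\ref{toet6.4} goes through once (a) is available: the orbifold enters only through \eqref{pb4.7}, \eqref{pb4.9}, and one replaces $P_p$ by $F(D_p)$ in the cut-off kernels, exactly as in the proof of Proposition~\ref{toet6.5}.

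Next I would establish the expansion of $T_{f,p}\,T_{g,p}$. It is clear that $P_p\,T_{f,p}\,T_{g,p}\,P_p=T_{f,p}\,T_{g,p}$, and the decay estimate \eqref{toe6.21} follows from the orbifold analogues of Lemmas~\ref{toet2.1} and \ref{toet2.3}. Arguing as in \eqref{toe6.17}, \eqref{toe6.29} one has, up to $\cO(p^{-\infty})$,
\[
(T_{f,p}\,T_{g,p})(x,x')=\int_X (F(D_p)fF(D_p))(x,x'')\,(F(D_p)gF(D_p))(x'',x')\,dv_X(x'').
\]
Composing kernels by \eqref{pb4.9} and using, on a chart, the integral representation analogous to \eqref{toe4.5}, the lifted kernel of $T_{f,p}\,T_{g,p}$ acquires an expansion of the form \eqref{toe6.6}. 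Since $G_U$ acts freely on $\tau_U^{-1}(U_{reg})$, for $x_0\in U_{reg}$ the terms with $g\neq1$ in \eqref{pb4.9} are $\cO(p^{-\infty})$ near the diagonal, so on the dense set $U_{reg}$ the local symbols are $Q_{r,x_0}(f,g)=\sum_{r_1+r_2=r}\cK[Q_{r_1,x_0}(f),Q_{r_2,x_0}(g)]$, exactly as in \eqref{toe4.7}; by the smooth dependence on $x_0$ (Lemma~\ref{toet6.2}) this persists for all $x_0\in X$. Hence $T_{f,p}\,T_{g,p}$ meets the hypotheses of the Toeplitz criterion and is a Toeplitz operator admitting the expansion \eqref{toe4.2}; as in the proof of Theorem~\ref{toet4.1} the coefficients $g_l=C_l(f,g)$ are given by bidifferential operators, defined locally on each $\widetilde U$, and \eqref{toe2.15} gives $C_0(f,g)=fg$.

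Assume now $f,g\in\cC^\infty(X)$. On a chart I would reproduce verbatim the computation \eqref{toe4.8}--\eqref{toe4.16}: \eqref{toe4.10} and \eqref{toe4.13} come from $T_{f,p}P_p=P_p T_{f,p}$, while \eqref{toe4.12a} and \eqref{toe4.14} use only Lemma~\ref{toet2.5} together with the fact recorded in Remarks~\ref{rue} and \ref{toet2.6} that $\boldsymbol{\mO}_1$ acts as the identity on $E$ — no precise formula for $\boldsymbol{\mO}_1$ or $J_{1,x_0}$ being needed. Diagonalizing $\bJ_{x_0}$ with eigenvalues $a_j(x_0)$ as in Lemma~\ref{0l3.0}(v) and invoking the model identities \eqref{toe1.13}, one obtains $C_1(f,g)-C_1(g,f)=\sqrt{-1}\{f,g\}\,\Id_E$, and therefore \eqref{toe4.4}. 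Finally, \eqref{toe4.17} follows by repeating verbatim the proof of Theorem~\ref{toet4.2} with the peak sections $S^p_{x_0}=p^{-n/2}P_p(e_L^{\otimes p}\otimes u_0)$, the only input being the orbifold analogue of the estimate \eqref{aue66}.

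The step I expect to be the main obstacle is the bookkeeping around the group averaging: after localizing on a chart, one must check that the terms in \eqref{pb4.7}, \eqref{pb4.9} indexed by $g\neq1$ do not contribute to the near-diagonal asymptotics from which the local symbols $Q_{r,x_0}(f,g)$ are read off. This is transparent on $U_{reg}$, where the action is free and $d(g^{-1}\widetilde x,\widetilde x')$ is bounded below near the diagonal, and one then passes to the singular set by density and smoothness, exactly as in the proof of Lemma~\ref{toet6.2}; the care required is in making sure the inductive scheme of Section~\ref{toes3} survives this passage, but no new phenomenon arises beyond what is already handled in Sections~\ref{pbs4.2}--\ref{pbs4.3}.
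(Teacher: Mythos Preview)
Your proposal is correct and follows essentially the same approach as the paper, which simply states that the result is obtained ``by combining the argument in Sections~\ref{s5.3} and~\ref{pbs4.3}''; you have carefully spelled out how this combination works, correctly noting that in the symplectic orbifold case $J_{1,x_0}$ and $\boldsymbol{\mO}_1$ need not vanish (in contrast to \eqref{toe6.15}) and that one only needs Remark~\ref{toet2.6} for the commutator identity. Your discussion of the group-averaging bookkeeping via density of $U_{reg}$ and Lemma~\ref{toet6.2} is exactly the mechanism the paper relies on implicitly.
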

As before, for the given data $X, J, g^{TX}, L,h^L,\nabla^L$ 
from Theorem \ref{toet6.11} and $E=\C$, Theorem \ref{toet6.12} implies 
 a canonical construction of the (associative) Berezin-Toeplitz star-product
$f*g=\sum_{l=0}^\infty \hbar^{l}C_l(f,g)\in\cC^\infty(X)[[\hbar]]$ 
for every $f,g\in \cC^\infty(X)$.


\def\cprime{$'$} \def\cprime{$'$}
\providecommand{\bysame}{\leavevmode\hbox to3em{\hrulefill}\thinspace}
\providecommand{\MR}{\relax\ifhmode\unskip\space\fi MR }
\providecommand{\MRhref}[2]{%
  \href{http://www.ams.org/mathscinet-getitem?mr=#1}{#2}
}
\providecommand{\href}[2]{#2}

\end{document}